\renewcommand{\mid}{:}
\def\R{\mathbb{R}}
\def\cC{\mathcal{C}}
\def\cL{\mathcal{L}}
\def\cK{\mathcal{K}}
\def\cP{\mathcal{P}}
\def\cS{\mathcal{S}}
\newcommand{\oo}[1]{#1^{\circ}}
\newcommand{\maxi}[1]{#1_{\max}}
\newcommand{\mini}[1]{#1_{\min}}
\newcommand{\gog}[1]{\mathcal{G}_{#1}}
\newcommand{\ogog}[1]{\oo{\gog{#1}}}
\newcommand{\magog}[1]{\mathcal{M}_{#1}}
\newcommand{\omagog}[1]{\oo{\magog{#1}}}
\newcommand{\kagog}[1]{\mathcal{K}_{#1}}
\newcommand{\seq}[1]{\mathcal{S}_{#1}}
\newcommand{\coin}[1]{\mathcal{C}_{#1}}
\newcommand{\catpath}[1]{\mathcal{P}_{#1}}
\newcommand{\hybrid}[1]{\mathcal{H}_{#1}}
\newcommand{\pyr}[1]{\triangle{#1}}
\newcommand{\pyro}[1]{\pyr{\oo{#1}}}
\newcommand{\pyrogog}[1]{\pyr{\ogog{#1}}}
\newcommand{\pyrmagog}[1]{\pyr{\magog{#1}}}
\newcommand{\pyromagog}[1]{\pyr{\omagog{#1}}}
\newcommand{\pyrkagog}[1]{\pyr{\kagog{#1}}}
\newcommand{\pyrseq}[1]{\pyr{\seq{#1}'}}
\newcommand{\pyrcoin}[1]{\pyr{\coin{#1}'}}
\DeclareDocumentCommand\finetti{ m g g }{
        \IfNoValueF {#3} {\mathcal{F}_{#1,#2}^{#3}}
        \IfNoValueT {#3} {\mathcal{F}_{#1  \IfNoValueF {#2}{, #2}}}
}
\let\emptyset\varnothing
\let\phi\varphi
\newcommand{\sq}[2]{{\draw (#1,#2) -- (#1+1,#2) -- (#1+1,#2+1) -- (#1,#2+1) -- cycle;}}
\newcommand{\powerset}[1]{{\mathcal{P}(#1)}}
\newif\ifnotesw\noteswtrue
\newcommand{\beq}[1]{\begin{equation}\label{eq:#1}}
\newcommand{\eeq}{\end{equation}}
\newtheorem{theorem}{Theorem}[section]
\newcommand{\bth}[2][nothing]{\ifthenelse{\equal{#1}{nothing}}
 {\begin{theorem}} {\begin{theorem}[#1]}\label{th:#2}}
\newtheorem{lemma}[theorem]{Lemma}
\newcommand{\blm}[2][nothing]{\ifthenelse{\equal{#1}{nothing}}
 {\begin{lemma}} {\begin{lemma}[#1]}\label{lm:#2}}
 \newtheorem{prop}[theorem]{Proposition}
\newcommand{\bpr}[2][nothing]{\ifthenelse{\equal{#1}{nothing}}
 {\begin{prop}} {\begin{prop}[#1]}\label{pr:#2}}
\newtheorem{cor}[theorem]{Corollary}
\newcommand{\bcor}[2][nothing]{\ifthenelse{\equal{#1}{nothing}}
 {\begin{cor}} {\begin{cor}[#1]}\label{co:#2}}
 \newtheorem{definition}[theorem]{Definition}
\newcommand{\bdef}[2][nothing]{\ifthenelse{\equal{#1}{nothing}}
 {\begin{definition}} {\begin{definition}[#1]}\label{def:#2}}
\newcommand{\frontside}[2]{\fill[xshift=#1 cm, yshift=#2 cm, fill=white,draw=black] (0,0)--(-1cm,0)--(-1cm,1cm)--(0,1cm)--(0,0);}
\newcommand{\topside}[2]{\fill[xshift=#1 cm, yshift=#2 cm, fill=white!98!gray,draw=black] (0,1)--(-1cm,1cm)--(-.5cm,1.3cm)--(0.5cm,1.3cm)--(0,1);}
\newcommand{\rightside}[2]{\fill[xshift=#1 cm, yshift=#2 cm, fill=white!98!gray,draw=black] (0,0)--(0,1cm)--(.5cm,1.3cm)--(0.5cm,0.3cm)--(0,0);}
\newcommand{\justtopside}[2]{\fill[xshift=#1 cm, yshift=#2 cm, fill=white!98!gray,draw=black] (0,0)--(-1cm,0cm)--(-.5cm,0.3cm)--(0.5cm,0.3cm)--(0,0);}
\newcommand{\justrightside}[2]{\fill[xshift=#1 cm, yshift=#2 cm, fill=white!98!gray,draw=black] (0,0)--(0.5cm,0.3cm)--(0.5,0cm)--(0,0);}
\newcommand{\gfrontside}[2]{\fill[xshift=#1 cm, yshift=#2 cm, fill=gray!30!white,draw=black] (0,0)--(-1cm,0)--(-1cm,1cm)--(0,1cm)--(0,0);}
\newcommand{\gtopside}[2]{\fill[xshift=#1 cm, yshift=#2 cm, fill=gray!45!white,draw=black] (0,1)--(-1cm,1cm)--(-.5cm,1.3cm)--(0.5cm,1.3cm)--(0,1);}
\newcommand{\grightside}[2]{\fill[xshift=#1 cm, yshift=#2 cm, fill=gray!45!white,draw=black] (0,0)--(0,1cm)--(.5cm,1.3cm)--(0.5cm,0.3cm)--(0,0);}
\newcommand{\gjusttopside}[2]{\fill[xshift=#1 cm, yshift=#2 cm, fill=gray!45!white,draw=black] (0,0)--(-1cm,0cm)--(-.5cm,0.3cm)--(0.5cm,0.3cm)--(0,0);}
\newcommand{\gjustrightside}[2]{\fill[xshift=#1 cm, yshift=#2 cm, fill=gray!45!white,draw=black] (0,0)--(0.5cm,0.3cm)--(0.5,0cm)--(0,0);}
\newcommand{\cube}[2]{
\frontside{#1}{#2}
\topside{#1}{#2}
\rightside{#1}{#2}}
\newcommand{\gcube}[2]{
\gfrontside{#1}{#2}
\gtopside{#1}{#2}
\grightside{#1}{#2}}
\newcounter{x}
\newcounter{r}
\newcommand{\blocklayer}[2]{
\foreach \i in {1,...,#1}{
	\cube{\i}{0}}
\setcounter{x}{-1}
\foreach \j/\off in {#2}{
	\addtocounter{x}{1}
	\foreach \k in {1,...,\j}{
		\pgfmathsetmacro\y{\k+\off+0.5*(\value{x}+1)}
		\pgfmathsetmacro\z{1.3+\value{x}*0.3}
		\justtopside{\y}{\z}}
	\pgfmathsetmacro\Y{\j+\off+0.5*(\value{x}+1)}
	\pgfmathsetmacro\Z{1.3+\value{x}*0.3}	
	\justrightside{\Y}{\Z}
}
}
\newcommand{\twocolor}[2]{
\setcounter{x}{-1}
\foreach \g/\w in {#1}{
	\ifthenelse{\g=0}{}{
	\foreach \i in {1,...,\g}{
		\addtocounter{x}{1}
		\pgfmathsetmacro\X{\value{x}}
		\gcube{\X}{0}}}
	\ifthenelse{\w=0}{}{
	\foreach \i in {1,...,\w}{	
		\addtocounter{x}{1}
		\pgfmathsetmacro\X{\value{x}}
		\cube{\X}{0}}}}
\setcounter{x}{-1}
\foreach \g/\w in {#2}{
	\addtocounter{x}{1}
	\setcounter{r}{-1}
	\ifthenelse{\g=0}{}{
	\foreach \k in {1,...,\g}{
		\addtocounter{r}{1}
		\pgfmathsetmacro\y{\value{r}+0.5*(\value{x}+1)}
		\pgfmathsetmacro\z{1.3+\value{x}*0.3}
		\gjusttopside{\y}{\z}}}
	\ifthenelse{\w=0}{\pgfmathsetmacro\Y{\value{r}+0.5*(\value{x}+1)}
	\pgfmathsetmacro\Z{1.3+\value{x}*0.3}
	\gjustrightside{\Y}{\Z}}{
	\foreach \k in {1,...,\w}{
		\addtocounter{r}{1}
		\pgfmathsetmacro\y{\value{r}+0.5*(\value{x}+1)}
		\pgfmathsetmacro\z{1.3+\value{x}*0.3}
		\justtopside{\y}{\z}}	
	\pgfmathsetmacro\Y{\value{r}+0.5*(\value{x}+1)}
	\pgfmathsetmacro\Z{1.3+\value{x}*0.3}
	\justrightside{\Y}{\Z}}
}
}
\newcommand{\reversetwocolor}[2]{
\setcounter{x}{-1}
\foreach \g/\w in {#1}{
	\ifthenelse{\g=0}{}{
	\foreach \i in {1,...,\g}{
		\addtocounter{x}{1}
		\pgfmathsetmacro\X{\value{x}}
		\cube{\X}{0}}}
	\ifthenelse{\w=0}{}{
	\foreach \i in {1,...,\w}{	
		\addtocounter{x}{1}
		\pgfmathsetmacro\X{\value{x}}
		\gcube{\X}{0}}}}
\setcounter{x}{-1}
\foreach \g/\w in {#2}{
	\addtocounter{x}{1}
	\setcounter{r}{-1}
	\ifthenelse{\g=0}{}{
	\foreach \k in {1,...,\g}{
		\addtocounter{r}{1}
		\pgfmathsetmacro\y{\value{r}+0.5*(\value{x}+1)}
		\pgfmathsetmacro\z{1.3+\value{x}*0.3}
		\justtopside{\y}{\z}}}
	\ifthenelse{\w=0}{\pgfmathsetmacro\Y{\value{r}+0.5*(\value{x}+1)}
	\pgfmathsetmacro\Z{1.3+\value{x}*0.3}
	\justrightside{\Y}{\Z}}{
	\foreach \k in {1,...,\w}{
		\addtocounter{r}{1}
		\pgfmathsetmacro\y{\value{r}+0.5*(\value{x}+1)}
		\pgfmathsetmacro\z{1.3+\value{x}*0.3}
		\gjusttopside{\y}{\z}}	
	\pgfmathsetmacro\Y{\value{r}+0.5*(\value{x}+1)}
	\pgfmathsetmacro\Z{1.3+\value{x}*0.3}
	\gjustrightside{\Y}{\Z}}
}
}
\begin{document}

\title{de Finetti Lattices and Magog Triangles} 
\author{Andrew Beveridge\footnote{Department  of Mathematics, Statistics and Computer Science, Macalester College, St Paul, MN 55105. \texttt{abeverid@macalester.edu}},\; Ian Calaway\footnote{Department of Economics, Stanford University, Stanford, CA 94305. \texttt{icalaway@stanford.edu}}
%\footnote{National Bureau of Economic Research, Massachusetts Institute of Technology, Cambridge, MA 02139. \texttt{ian.calaway@gmail.com}}
\; and Kristin Heysse\footnote{Department  of Mathematics, Statistics and Computer Science, Macalester College, St Paul, MN 55105. \texttt{kheysse@macalester.edu}}}
\maketitle

\begin{abstract}
The order ideal $B_{n,2}$ of the Boolean lattice $B_n$ consists of all subsets of size at most $2$. Let $F_{n,2}$ denote the poset refinement of $B_{n,2}$ induced by the rules: $i < j$ implies 
$\{i \} \prec \{ j \}$ and $\{i,k \} \prec \{j,k\}$. We give an elementary bijection from
the set $\finetti{n}{2}$ of linear extensions of $F_{n,2}$ to the set of  shifted standard Young tableau of shape $(n, n-1, \ldots, 1)$, which are counted by the strict-sense ballot numbers. We find a more surprising result when considering the set $\finetti{n}{2}{1}$  of minimal poset refinements in which each singleton is comparable with all of the doubletons. We show that $\finetti{n}{2}{1}$ is in bijection with magog triangles, and therefore is equinumerous with alternating sign matrices. 
%The lattice $F_{n,2}$ is inspired by total orderings of $B_n$ that satisfy de Finetti's axiom that if $X \prec Y$ then $X \cup Z \prec Y \cup Z$ for $(X \cup Y) \cap Z = \emptyset$. 
We adopt our proof techniques to show that row reversal of an alternating sign matrix corresponds to a natural involution on gog triangles.
\end{abstract}

\noindent
{\bf Keywords:} Boolean lattice, poset refinement, de Finetti's axiom, Boolean term order, comparative probability order, completely separable preference, strict-sense ballot number,
%alternating sign matrix, total symmetric self-complementary plane partition, 
magog triangle, gog triangle.

% \section{Setting notation}

% \begin{center}
    
% \begin{tabular}{cl}
% %%%%% triangles
% $\gog{n}$ & The family of gog triangles $G \in \gog{n}$ \\
% $\ogog{n}$ & The family of ogog triangles $\oo{G} \in \ogog{n}$\\
% $\magog{n}$ & The family of magog triangles $M \in \magog{n}$\\
% $\omagog{n}$ & The family of omagog triangles $\oo{M} \in \omagog{n}$\\
% $\kagog{n}$ & The family of kagog triangles $K \in \kagog{n}$ \\
% \\
% %%%% pyramids
% $\pyrgog{n}$ & The family of gog pyramids $\pyr{G} \in \pyrgog{n}$ \\
% $\pyrogog{n}$ & The family of ogog pyramids $\pyro{G} \in \pyrogog{n}$\\
% $\pyrmagog{n}$ & The family of magog pyramids $\pyr{M} \in \pyrmagog{n}$\\
% $\pyromagog{n}$ & The family of omagog pyramids $\pyro{M} \in \pyromagog{n}$\\
% $\pyrkagog{n}$ & The family of kagog pyramids $\pyr{K} \in \pyrkagog{n}$\\
% \\
% $\pyr{G}$ & the two-colored pyramid for triangle $G \in \gog{n}$ \\
% $\pyr{G}(i,j,k)$ & the color (0 or 1) of the cube at position $(i,j,k)$ \\
% \\
% $\pyrmap : \gog{n} \rightarrow \pyrgog{n}$ & the mapping from a triangle to a pyramid \\
% $\pyrmap(G) = \pyr{G}$ & the pyramid map applied to triangle $G$ produces pyramid $\pyr{G}$
% \\
% $\maxi{G}$ & the unique maximum triangle in $\gog{n}$ \\
% $\mini{G}$ & the unique minimum triangle in $\gog{n}$ \\
% $\maxi{\pyr{G}}$ & the unique maximum pyramid in $\pyr{\gog{n}}$ \\
% $\mini{\pyr{G}}$ & the unique minimum pyramid in $\pyr{\gog{n}}$ \\
% \\
% cube & a $1 \times 1 \times 1$ cube in the pyramid
% \end{tabular}

% \end{center}

\section{Introduction}

\subsection{Overview}
\label{sec:newintro}

The Boolean lattice $B_n$ consists of subsets of $[n] = \{ 1,2, \ldots, n \}$ ordered by inclusion. 
The  order ideal $B_{n,2} \subset B_n$ consists of all subsets of size at most $2$.
Define $F_{n,2}$ to be the poset refinement of $B_{n,2}$ where we add the relations
\begin{enumerate}
\item[(R1)] $\{i \} \prec \{ k \}$ if and only if   $i < k$.
\item[(R2)] If $i<j$ and $k < \ell$ then 
$$
\{ i, j \} \prec \{ k, \ell \} \quad \mbox{if and only if} \quad ( i < k \mbox{ and } j \leq \ell) \mbox{ or } 
( i \leq k \mbox{ and } j < \ell).
$$
\end{enumerate}
%\begin{align*}
%\{i \} \prec \{ k \} &\longleftrightarrow   i < k, \\
%\{ i, j \} \prec \{ k, \ell \}  &\longleftrightarrow ( i < k \mbox{ and } j \leq k) \mbox{ or } 
%( i \leq k \mbox{ and } j < k).
%\end{align*}
Transitivity of the set inclusion relation $\{i\} \prec \{i,j\}$ and (R2) yields the relation
$$
\{ i \} \prec \{ k, \ell \} \quad \mbox{if and only if} \quad   i < \min \{ k,\ell \}.
$$
The Hasse diagrams for $B_{n,2}$ and $F_{n,2}$ are shown in Figure \ref{fig:boolean-finetti}.

\begin{figure}[ht]

\begin{tikzpicture}[align=center,node distance=1.25cm and 1.25cm]

\begin{scope}[shift={(0,1.5)}]
\node at (0,3.25) {$B_{4,2}$};
\node (b0) at (0,0) {$\emptyset$};;

\node[above left of=b0] (b2)  {2};
\node[left of=b2]  (b1)  {1};
\node[right of=b2] (b3)  {3};
\node[right of=b3] (b4)  {4};

\node[above of=b1] (b31) {31};
\node[left of=b31] (b21) {21};
\node[right of=b31] (b41) {41};
\node[right of=b41] (b32) {32};
\node[right of=b32] (b42) {42};
\node[right of=b42] (b43) {43};

\draw[thick] (b0) -- (b1);
\draw[thick] (b0) -- (b2);
\draw[thick] (b0) -- (b3);
\draw[thick] (b0) -- (b4);

\draw[thick] (b1) -- (b21);
\draw[thick] (b1) -- (b31);
\draw[thick] (b1) -- (b41);
\draw[thick] (b2) -- (b21);
\draw[thick] (b2) -- (b32);
\draw[thick] (b2) -- (b42);
\draw[thick] (b3) -- (b31);
\draw[thick] (b3) -- (b32);
\draw[thick] (b3) -- (b42);
\draw[thick] (b4) -- (b41);
\draw[thick] (b4) -- (b42);
\draw[thick] (b4) -- (b43);
\end{scope}

%%%%%%%%%%%%%%%% F_4,2
\begin{scope}[shift={(8,0)}]
\node at (-1.5,5.5) {$F_{4,2}$};
\node (n0) at (0,0) {$\emptyset$};;
\node[above right of=n0]  (n1)  {1};
\node[above left of=n1] (n2)  {2};
\node[above left of=n2] (n3)  {3};
\node[above left of=n3] (n4)  {4};
\node[above right of=n2] (n21) {21};
\node[above right of=n3] (n31) {31};
\node[above right of=n31] (n32) {32};
\node[above right of=n4] (n41) {41};
\node[above right of=n41] (n42) {42};
\node[above  right of=n42] (n43) {43};
\draw[thick] (n4) -- (n3) -- (n2) -- (n1) -- (n0);
\draw[thick] (n2) -- (n21) -- (n31) -- (n41) -- (n42);
\draw[thick]  (n31) --  (n32) -- (n42) -- (n43);
\draw[thick] (n3) -- (n31);
\draw[thick] (n4) -- (n41);
\end{scope}

\end{tikzpicture}

\caption{Hasse diagrams for the order ideal $B_{n,2}$ and the de Finetti lattice $F_{n,2}$. The sets $\{i\}$ and $\{j,k\}$ are denoted by $i$ and $jk$ where $j > k$.}
\label{fig:boolean-finetti}
\end{figure}
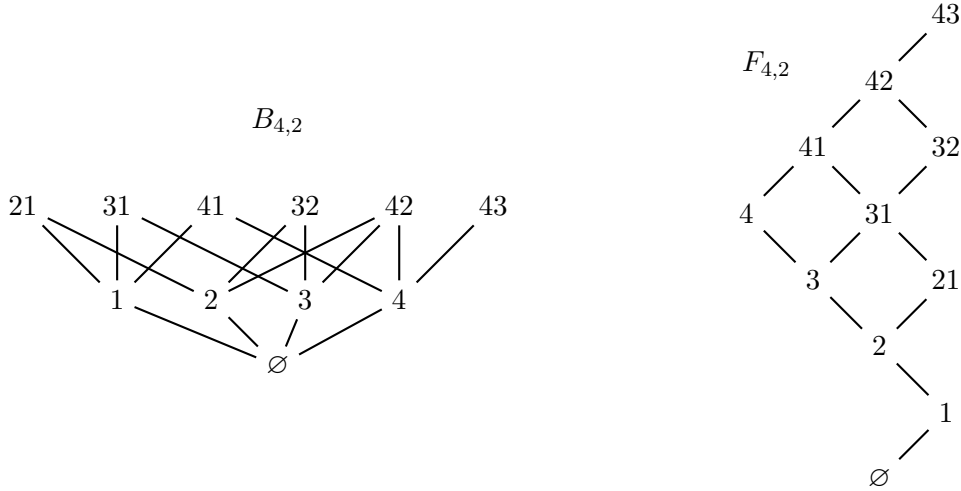

%\begin{lemma}  
%\label{lemma:Fn21}
%The poset $F_{n,2}$ is a distributive lattice.
%\end{lemma}

%\begin{proof}
It is straightforward to confirm that the poset $F_{n,2}$ is a distributive lattice. First, we show every pair of subsets $x,y \in F_{n,2}$ has
a meet (greatest lower bound) $x \wedge y$ and a join (least upper bound) $x \vee y$.
For $j > i$  and $ \ell > k$, we have
$$
\begin{array}{rclcrcl}
\{ j  \} \wedge \{ \ell \} &=& \{ \min \{ j,\ell \} \},& \quad & \{ j  \} \vee \{ \ell \} &=& \{ \max \{ j,\ell \}  \}, \\
\{ j  \} \wedge \{ \ell,k \} &=& \{ \min \{ j,\ell \} \},& \quad & \{ j  \} \vee \{ \ell, k \} &=& \{ \max \{ j,\ell \}, k \}, \\
\{ j,i  \} \wedge \{\ell,k \} &=& \{ \min \{ j,\ell \}, \min \{ i,k \} \},& \quad & \{ j,i  \} \vee \{ \ell, k \} &=& \{ \max \{ j,\ell \}, \max \{i,k\} \}.\\
\end{array}
$$
Note that if we identify $\{j \}$ with $\{j,0\}$ and $\{\ell \}$ with $\{\ell,0\}$, then the meet and join rules for doubleton pairs imply the other rules. Moreover, it then  becomes easy to confirm that $F_{n,2}$ is a distributive lattice: $x \vee (y \wedge z) =  (x \vee y) \wedge (x \vee z)$ and $x \wedge (y \vee z) = (x \wedge y) \vee (x \wedge z)$.
%\end{proof}

We refer to $F_{n,2}$ as the \emph{de Finetti lattice}; the origin of this name will be illuminated in Section \ref{sec:csp} below. We resolve two questions concerning families of poset refinements of $F_{n,2}$.

First, let $\finetti{n}{2}$ denote the collection of linear extensions of $F_{n,2}$. We give a simple bijection between the total orders in $\finetti{n}{2}$ and shifted standard Young tableau (shifted SYT) of shape $(n,n-1,\ldots,1)$, see OEIS A003121 \cite{oeis}. In these shifted SYT of staircase shape, the first box in row $i > 1$ is located below the second box of row $i-1$. The integers $1,2, \ldots ,n(n+1)/2$ are arranged in the boxes so that the  rows and the columns are both increasing. These are equinumerous with the number of strict-sense ballots with $n$ candidates, where candidate $k$ gets $k$ votes, candiate $k$ never trails candidate $\ell$ for $n \geq k > \ell \geq  1$, see 
%Barton and Mallows 
\cite{barton}. For $1 \leq n \leq 7$, the strict-sense ballot numbers are
$$
1, 1, 2, 12, 286, 33592, 23178480
$$
and the general formula for the $n$th  strict-sense ballot number is
$$
\binom{n+1}{2} ! \, \frac{\prod_{k=1}^{n-1} k!}{\prod_{k=1}^n (2k-1)!}.
$$
The proof of the following proposition appears in Section \ref{sec:finetti-ballot}.

\begin{prop}
\label{thm:fn2}
The set $\finetti{n}{2}$ is in bijection with shifted standard Young tableaux of shape $(n,n-1,\ldots,1)$. Therefore $\finetti{n}{2}$ is enumerated by the strict-sense ballot numbers.
\end{prop}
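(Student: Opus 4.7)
The plan is to exhibit an explicit poset isomorphism between $F_{n,2}\setminus\{\emptyset\}$ and the cell poset of the shifted staircase $(n,n-1,\ldots,1)$, and then invoke the definition of a shifted SYT as a linear extension of that cell poset.

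Concretely, I index the cells of the shifted staircase by pairs $(r,c)$ with $1\leq r\leq c\leq n$, so that row $r$ consists of cells $(r,r),(r,r+1),\ldots,(r,n)$. The total number of cells is $\binom{n+1}{2}$, which matches $|F_{n,2}\setminus\{\emptyset\}|=n+\binom{n}{2}$. I propose the bijection
\[
\phi\colon\{i\}\longmapsto (1,i),\qquad \phi\colon\{i,j\}\longmapsto (i+1,j)\quad(1\leq i<j\leq n).
\]
Note that the constraint $i<j$ matches exactly the constraint $i+1\leq j$ for cells in the shifted staircase, and that row $1$ has $n$ cells (one per singleton) while row $i+1$ for $i\geq 1$ has $n-i$ cells (one per doubleton with smallest element $i$).

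The main step is to verify that $\phi$ is a poset isomorphism when the shifted staircase is given the cell poset in which $(r,c)\leq(r',c')$ iff $r\leq r'$ and $c\leq c'$ (with both cells in the shape). For (R1), $\{i\}\prec\{k\}$ with $i<k$ corresponds to $(1,i)$ lying weakly northwest of $(1,k)$, which is immediate. For set inclusion $\{i\}\prec\{i,j\}$, $(1,i)$ and $(i+1,j)$ satisfy $1\leq i+1$ and $i\leq j$, so the relation holds. For (R2), the condition $(i<k,\ j\leq\ell)$ or $(i\leq k,\ j<\ell)$ is precisely the statement that $(i+1,j)\leq(k+1,\ell)$ strictly in the componentwise order, so this case matches exactly. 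Conversely, if $(i+1,j)\leq(k+1,\ell)$ in the cell poset with at least one strict inequality, one recovers (R2). I would also remark that the ``shifted'' shape is saturated enough that whenever $(r,c)\leq(r',c')$ holds componentwise in shape, a monotone lattice path inside the shape connects the two cells, so the cover-closure in the cell poset really is just componentwise order.

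Once the isomorphism is established, the proposition follows: $\emptyset$ is the unique minimum of $F_{n,2}$, so every linear extension of $F_{n,2}$ begins with $\emptyset$ followed by a linear extension of $F_{n,2}\setminus\{\emptyset\}$. Via $\phi$, these are in bijection with linear extensions of the cell poset of $(n,n-1,\ldots,1)$, which by definition are the shifted standard Young tableaux of that shape. The enumeration by strict-sense ballot numbers is then the classical formula (OEIS A003121) quoted just before the statement. The only real ``obstacle'' here is guessing the right indexing on the staircase; once one writes the singletons along the top row and shifts each doubleton $\{i,j\}$ one row down into column $j$, the comparison rules in (R1)--(R2) translate into componentwise order of cells in a transparent way.
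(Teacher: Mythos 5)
Your proof is correct and follows essentially the same route as the paper's: both lay the singletons along the top row and place the doubleton $\{i,j\}$ ($i<j$) in row $i+1$, column $j$ of the shifted staircase, identify the order on $F_{n,2}\setminus\{\emptyset\}$ with the componentwise cell order, and read off linear extensions as standard fillings. Your verification is in fact more explicit than the paper's (which simply asserts that rows and columns increase by (F1)--(F2)); the only sub-case you leave implicit is that $\{m\}\preceq\{i,j\}$ for $i<m\le j$, which does hold via $\{m\}\prec\{i,m\}\prec\{i,j\}$, so the isomorphism claim stands.
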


Our second family of poset refinements is less conventional. Rather than performing a linear extension of $B_{n,2}$, we add only the relations required so that the singleton sets are  comparable with every doubleton set. In other words, we resolve every incomparable pair $\{i\}, \{k,\ell\}$, but  this may leave some incomparable doubleton pairs 
$\{i,j\}, \{k,\ell\}$.
%For $n \geq 4$, this leaves pairs of incomparable doubletons (including the pair $\{1,4\}$ and $\{2,3\}$). 
%Let $\finetti{n}{2}{1}$ denote the family of minimal poset refinements of $F_{n,2}$ in which the singletons are comparable with every set in $F_{n,2}$. 
Let $\finetti{n}{2}{1}$ denote this family of  poset refinements of $F_{n,2}$. We prove the following lemma in Section \ref{sec:csp}.
\begin{lemma}  
\label{lemma:Fn21}
Every poset in $\mathcal{F}_{n,2}^{1}$ is a lattice.
\end{lemma}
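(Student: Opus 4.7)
The plan is to extract the structural content of the hypothesis defining $\mathcal{F}_{n,2}^{1}$ and then verify the existence of meets and joins by reducing to a small number of cases. Since $F_{n,2}$ already forces the singletons into the chain $\{1\} \prec \cdots \prec \{n\}$, and in any refinement from $\mathcal{F}_{n,2}^{1}$ every singleton is comparable to every doubleton, for each doubleton $D = \{a,b\}$ with $a<b$ the set $\{m : \{m\} \prec D\}$ is a downset of this chain containing $b$, and therefore has the form $\{1, 2, \ldots, t(D)\}$ for some threshold $t(D) \in \{b, b+1, \ldots, n\}$. Transitivity forces $t(D) \leq t(D')$ whenever $D \prec D'$ in $F_{n,2}$, while conversely $t(D) < t(D')$ induces $D \prec \{t(D)+1\} \preceq D'$ in the refinement. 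Thus the doubleton--doubleton relation in the refinement is characterized by: $D \preceq D'$ in the refinement iff either $D \preceq D'$ already in $F_{n,2}$ or $t(D) < t(D')$.

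With this description, meets and joins involving $\emptyset$, two singletons, or one singleton and one doubleton are automatic. The remaining case is a pair of doubletons $x, y$. If they are comparable in the refinement the meet and join are trivial; otherwise the characterization above forces $t(x) = t(y) = T$. Let $M$ and $J$ be the meet and join of $x, y$ in $F_{n,2}$, both doubletons by the explicit formulas in the excerpt. I claim the meet of $x, y$ in the refinement is whichever of $M$ and $\{T\}$ is larger, and the join is whichever of $J$ and $\{T+1\}$ is smaller (omitting $\{T+1\}$ when $T=n$). These candidates are comparable because every singleton is comparable to every doubleton, and each is a common lower (resp.\ upper) bound: for the singleton this is immediate from the threshold, and for $M, J$ it is inherited from $F_{n,2}$. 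Maximality of the meet then follows by noting that any singleton lower bound $\{m\}$ has $m \leq T$ and hence lies below both candidates, while any doubleton lower bound $L$ either satisfies $L \preceq M$ already in $F_{n,2}$ (placing it below $M$) or enters the lower bounds only via the threshold route, which forces $t(L) < T$ and hence $L \prec \{T\}$ in the refinement. A symmetric argument handles joins.

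The main obstacle is controlling the ``new'' doubleton common lower bounds of $x, y$, namely those $L$ not below $M$ in $F_{n,2}$. The resolution is the observation that incomparability of $x, y$ in the refinement forces $t(x) = t(y) = T$, which in turn forces any such $L$ to satisfy $t(L) < T$ and hence $L \prec \{T\}$. This single remark drives both the meet and the join arguments, and explains why the threshold function $t$ is the correct structural tool for analyzing $\mathcal{F}_{n,2}^{1}$.
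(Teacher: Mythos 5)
Your proof is correct, but it takes a genuinely different route from the paper's. The paper argues by contradiction: it takes a minimal upper bound $z$ of two doubletons $x,y$ that is incomparable with the $F_{n,2}$-join $x\vee y$, splits into cases according to whether $z$ is an $F_{n,2}$-upper bound of neither or exactly one of $x,y$, and in each case produces a strictly smaller upper bound (a singleton, or $x$ itself); the dual argument handles meets. You instead extract an explicit invariant --- the threshold $t(D)$ recording which singletons lie below the doubleton $D$ --- prove that on doubletons the refined order is exactly ``$\preceq$ in $F_{n,2}$, or $t(D)<t(D')$,'' and then exhibit the lattice operations in closed form: incomparability forces $t(x)=t(y)=T$, the meet is the larger of $x\wedge y$ and $\{T\}$, and the join is the smaller of $x\vee y$ and $\{T+1\}$ (with the correct caveat when $T=n$, where there are no singleton upper bounds and $x \vee y$ alone suffices). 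Your version is longer but buys more: it actually computes the meet and join, and your threshold function is essentially the same data (where each singleton sits among the doubletons) that the paper later encodes as dots in Ferrers diagrams to biject $\mathcal{F}_{n,2}^{1}$ with kagog triangles, so your analysis could feed directly into that bijection. Both proofs silently rely on the same consequence of minimality, namely that any relation between $F_{n,2}$-incomparable doubletons must factor through a singleton: the paper invokes it when it asserts that the singletons $s_1,s_2$ exist, and you invoke it in the ``only if'' direction of your characterization and again when you claim that a doubleton common lower bound $L$ not below $x\wedge y$ in $F_{n,2}$ must satisfy $t(L)<T$. A one-line remark that a minimal refinement is the transitive closure of $F_{n,2}$ together with the chosen singleton--doubleton orientations would make that step airtight, but you are not below the paper's own standard of rigor on this point.
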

The seven posets in $\finetti{4}{2}{1}$ are shown in Figure \ref{fig:finetti-4-2-1}.
Two of these posets are linear extensions. In the remaining five refinements, the 
 doubletons 41 and 32 are incomparable. There are two ways to resolve this relation: either $41 \prec 32$ or $32 \prec 41$. This accounts for the  12 linear extensions in $\finetti{n}{2}$, in accordance with Proposition \ref{thm:fn2}.

%Here are the  seven posets in $\finetti{n}{2}{1}$, where we write $i$ for singleton $\{ i \}$ and $jk$ for doubleton $\{j,k\}$ where $j > k$:
%$$
%\begin{array}{c}
%\emptyset \prec 1 \prec 2  \prec  21 \prec 3   \prec 31 \prec 32 \prec 4  \prec 41 \prec 42 \prec 43, \\
%\emptyset \prec 1 \prec 2 \prec 3 \prec  21   \prec 31  \prec 32 \prec 4  \prec 41 \prec 42 \prec 43, \\
%\emptyset \prec 1 \prec 2  \prec  21 \prec 3   \prec 31 \prec 4  \prec \{ 41, 32 \} \prec 42 \prec 43, \\
%\emptyset \prec 1 \prec 2  \prec 3  \prec  21 \prec 31 \prec 4  \prec \{ 41, 32 \} \prec 42 \prec 43, \\
%\emptyset \prec 1 \prec 2 \prec  21 \prec 3  \prec 4  \prec 31 \prec \{ 41, 32 \} \prec 42 \prec 43, \\
%\emptyset \prec 1 \prec 2 \prec 3 \prec  21 \prec 4  \prec 31 \prec \{ 41, 32 \} \prec 42 \prec 43, \\
%\emptyset \prec 1 \prec 2 \prec 3 \prec 4 \prec 21 \prec 31 \prec \{ 41, 32 \} \prec 42 \prec 43.
%\end{array}
%$$

\begin{figure}[ht]
\begin{center}
\begin{tikzpicture}[align=center,node distance=.9cm] 

%%%%
\begin{scope}

\node (n0) at (1,0) {$\emptyset$};;
\node[above of=n0]  (n1)  {1};
\node[above of=n1] (n2)  {2};
\node[above of=n2] (n21) {21};

\node[above of=n21] (n3)  {3};
\node[above of=n3] (n31) {31};
\node[above of=n31] (n32) {32};

\node[above of=n32] (n4)  {4};
\node[above of=n4] (n41) {41};
\node[above of=n41] (n42) {42};
\node[above  of=n42] (n43) {43};

\draw (n0) -- (n1) -- (n2) -- (n21) -- (n3) -- (n31) -- (n32);
\draw (n32) -- (n4) -- (n41) -- (n42) -- (n43);

\end{scope}

%%%%
\begin{scope}[shift={(2.5,0)}]

\node (n0) at (0,0) {$\emptyset$};;
\node[above of=n0]  (n1)  {1};
\node[above of=n1] (n2)  {2};

\node[above of=n2] (n3)  {3};
\node[above of=n3] (n21) {21};

\node[above of=n21] (n31) {31};
\node[above of=n31] (n32) {32};

\node[above of=n32] (n4)  {4};
\node[above of=n4] (n41) {41};
\node[above of=n41] (n42) {42};
\node[above  of=n42] (n43) {43};

\draw (n0) -- (n1) -- (n2) -- (n3)-- (n21)  -- (n31) -- (n32);
\draw(n32) -- (n4) -- (n41) -- (n42) -- (n43);

\end{scope}

%%%%
\begin{scope}[shift={(4.5,0)}]

\node (n0) at (0,0) {$\emptyset$};;
\node[above of=n0]  (n1)  {1};
\node[above of=n1] (n2)  {2};

\node[above of=n2] (n21) {21};
\node[above of=n21] (n3)  {3};

\node[above of=n3] (n31) {31};
\node[above of=n31] (n4)  {4};

\node[above left of=n4] (n32) {32};
\node[above right of=n4] (n41) {41};

\node[above left of=n41] (n42) {42};
\node[above  of=n42] (n43) {43};

\draw (n0) -- (n1) -- (n2) -- (n21) -- (n3)  -- (n31) -- (n4) -- (n32);
\draw (n4) -- (n41) -- (n42) -- (n43);
\draw (n32) -- (n42);

\end{scope}

%%%%
\begin{scope}[shift={(7,0)}]

\node (n0) at (0,0) {$\emptyset$};;
\node[above of=n0]  (n1)  {1};
\node[above of=n1] (n2)  {2};

\node[above of=n2] (n3)  {3};
\node[above of=n3] (n21) {21};

\node[above of=n21] (n31) {31};
\node[above of=n31] (n4)  {4};

\node[above left of=n4] (n32) {32};
\node[above right of=n4] (n41) {41};

\node[above left of=n41] (n42) {42};
\node[above  of=n42] (n43) {43};

\draw (n0) -- (n1) -- (n2) -- (n3)-- (n21)  -- (n31) -- (n4) -- (n32);
\draw (n4) -- (n41) -- (n42) -- (n43);
\draw (n32) -- (n42);

\end{scope}

%%%%
\begin{scope}[shift={(9.5,0)}]

\node (n0) at (0,0) {$\emptyset$};;
\node[above of=n0]  (n1)  {1};
\node[above of=n1] (n2)  {2};

\node[above of=n2] (n21) {21};
\node[above of=n21] (n3)  {3};

\node[above of=n3] (n4)  {4};
\node[above of=n4] (n31) {31};

\node[above left of=n31] (n32) {32};
\node[above right of=n31] (n41) {41};

\node[above left of=n41] (n42) {42};
\node[above  of=n42] (n43) {43};

\draw (n0) -- (n1) -- (n2) -- (n21) -- (n3)  -- (n4) -- (n31)  -- (n32);
\draw (n31) -- (n41) -- (n42) -- (n43);
\draw (n32) -- (n42);

\end{scope}

%%%%
\begin{scope}[shift={(12,0)}]

\node (n0) at (0,0) {$\emptyset$};;
\node[above of=n0]  (n1)  {1};
\node[above of=n1] (n2)  {2};

\node[above of=n2] (n3)  {3};
\node[above of=n3] (n21) {21};

\node[above of=n21] (n4)  {4};
\node[above of=n4] (n31) {31};

\node[above left of=n31] (n32) {32};
\node[above right of=n31] (n41) {41};

\node[above left of=n41] (n42) {42};
\node[above  of=n42] (n43) {43};

\draw (n0) -- (n1) -- (n2) -- (n3)-- (n21) -- (n4) -- (n31)  -- (n32);
\draw (n31) -- (n41) -- (n42) -- (n43);
\draw (n32) -- (n42);

\end{scope}

%%%%
\begin{scope}[shift={(14.5,0)}]

\node (n0) at (0,0) {$\emptyset$};;
\node[above of=n0]  (n1)  {1};
\node[above of=n1] (n2)  {2};

\node[above of=n2] (n3)  {3};

\node[above of=n3] (n4)  {4};

\node[above of=n4] (n21) {21};

\node[above of=n21] (n31) {31};

\node[above left of=n31] (n32) {32};
\node[above right of=n31] (n41) {41};

\node[above left of=n41] (n42) {42};
\node[above  of=n42] (n43) {43};

\draw (n0) -- (n1) -- (n2) -- (n3) -- (n4) -- (n21) -- (n31)  -- (n32);
\draw (n31) -- (n41) -- (n42) -- (n43);
\draw (n32) -- (n42);

\end{scope}

\end{tikzpicture}

\caption{Hasse diagrams for the seven posets in $\finetti{4}{2}{1}$.}
\label{fig:finetti-4-2-1}

\end{center}
\end{figure}

Our main result is a bijection between $\finetti{n}{2}{1}$ and \emph{magog triangles} of size $n-1$, see OEIS A005130 \cite{oeis}.
Magog triangles are a family of triangular integer arrays which are in bijection with totally symmetric self-complimentary plane partitions (TSSCPPs). Their counterpart \emph{gog triangles} are in bijection with alternating sign matrices (ASMs). Gog and magog triangles were
 instrumental to Zielberger's famous proof that ASMs are equinumerous with TSSCPPs  \cite{zielberger}. 
The first seven numbers in this sequence are
$$
1, 2, 7, 42, 429, 7436, 218348
$$
and the general formula is
\begin{equation}
\label{eqn:asm}
\prod_{k=0}^{n-1} \frac{(3k+1)!}{(n+k)!}.
\end{equation}
We use $\magog{n}$ and $\gog{n}$ to denote the respective families of magog triangles and gog triangles of size $n$. 
%See Sections \ref{sec:kagog} and \ref{sec:gog} for the definitions of magog and gog triangles, respectively.
Here is our main theorem, which we prove in Section \ref{sec:kagog}.
\begin{theorem}
\label{thm:finetti-magog}
The family $\finetti{n}{2}{1}$ of de Finetti refinements of the lattice  $F_{n,2}$ is in bijection with the family $\magog{n-1}$ of magog triangles of size $n-1$. 
\end{theorem}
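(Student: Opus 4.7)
The plan is to encode each poset $P \in \finetti{n}{2}{1}$ by a single numerical statistic on doubletons---namely, the position of each doubleton in the singleton chain---and then reshape this data into the triangular form required of a magog triangle.

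For any $P \in \finetti{n}{2}{1}$, the singletons form a chain $\emptyset \prec \{1\} \prec \cdots \prec \{n\}$, and every doubleton $\{k,\ell\}$ (with $k>\ell$) is comparable with every singleton. Define the \emph{height function}
\[
h_P(k,\ell) \;=\; \max\bigl\{\,i \in \{0,1,\ldots,n\} : \{i\} \prec_P \{k,\ell\}\,\bigr\},
\]
with the convention $\{0\} := \emptyset$. Inclusion forces $\{k\} \prec \{k,\ell\}$, so $h_P(k,\ell) \geq k$, while trivially $h_P(k,\ell) \leq n$. Whenever $(k,\ell) < (k',\ell')$ componentwise, the $F_{n,2}$-relation $\{k,\ell\} \prec \{k',\ell'\}$ combined with transitivity through any singleton strictly above $\{k',\ell'\}$ forces $h_P(k,\ell) \leq h_P(k',\ell')$, so $h_P$ is weakly monotone under the componentwise order on $\{(k,\ell) : 1 \leq \ell < k \leq n\}$. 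Conversely, I claim that any integer function $h$ on this domain satisfying $k \leq h(k,\ell) \leq n$ and this componentwise monotonicity determines a unique poset $P_h \in \finetti{n}{2}{1}$: declare $\{i\} \prec \{k,\ell\}$ iff $i \leq h(k,\ell)$, retain the $F_{n,2}$-relations, and take the transitive closure.

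The bijection with $\magog{n-1}$ then arises via the substitution $k = n+1-i$, $\ell = n-j$, which sends the height-function domain bijectively onto the triangle $\{1 \leq i \leq j \leq n-1\}$. Setting
\[
a_{i,j} \;=\; n+1 - h_P(n+1-i,\,n-j),
\]
the bound $h_P(k,\ell) \geq k$ becomes $a_{i,j} \leq i$, weak monotonicity of $h_P$ in $\ell$ becomes weak row-increase $a_{i,j} \leq a_{i,j+1}$, and weak monotonicity in $k$ becomes weak column-increase $a_{i,j} \leq a_{i+1,j}$. These are precisely the defining conditions of a magog triangle of size $n-1$, so $P \mapsto (a_{i,j})$ is the desired bijection.

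The main obstacle is verifying the reconstruction claim: that weak monotonicity of $h$ is strong enough to prevent cyclic implications when the transitive closure is taken. The crux is that any transitively implied doubleton--doubleton relation $\{k,\ell\} \prec_{P_h} \{k',\ell'\}$ routed through a singleton $\{i\}$ forces $h(k,\ell) < i \leq h(k',\ell')$; a reversed routing would demand the opposite strict inequality, which is impossible, so no cycle can arise. As a sanity check, the seven posets of Figure~\ref{fig:finetti-4-2-1} correspond bijectively to the seven magog triangles of size three under this recipe, and the $h_P$-values $(h(2,1),h(3,1),h(3,2),h(4,1),h(4,2),h(4,3))$ can be read directly off each Hasse diagram.
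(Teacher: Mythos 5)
Your proof is correct, but it takes a genuinely different route from the paper's. The paper factors the bijection through an intermediate family of \emph{kagog triangles}: Lemma \ref{lemma:finetti-kagog} maps a refinement $E$ to a triangle row-by-row, by restricting $E$ to the sublattices $I_k$ of doubletons incomparable with the singleton $\{k\}$ and running a recursive Ferrers-diagram argument (Lemma \ref{lem:template}); Lemma \ref{lemma:magog-kagog} then converts kagog triangles to magog triangles by a four-step affine transformation and color inversion of two-color cube pyramids. You instead encode $P$ globally by the single statistic $h_P(k,\ell)=\max\{i:\{i\}\prec_P\{k,\ell\}\}$, observe that $P\mapsto h_P$ is a bijection onto the componentwise-monotone functions with $k\le h(k,\ell)\le n$, and land on magog triangles by the linear substitution $a_{i,j}=n+1-h_P(n+1-i,n-j)$. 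This is shorter and more elementary: it avoids the kagog family and the pyramid machinery entirely, and makes the magog inequalities fall out of a one-line change of variables. What the paper's longer route buys is the kagog triangles and the two-color pyramid formalism themselves, which are reused to prove the gog involution (Theorem \ref{thm:gog}) and advertised as a tool for the ASM/TSSCPP correspondence. Two small points to tighten in your write-up: (i) your array $a_{i,j}$ lives on $\{1\le i\le j\le n-1\}$ with bound $a_{i,j}\le i$, which is the \emph{transpose} of the paper's convention $1\le j\le i$ with $M(i,j)\le j$ in Definition \ref{def:magog}, so you should say the magog triangle is $M(j,i)=a_{i,j}$; and (ii) your acyclicity argument as stated only rules out a two-step reversal through one singleton --- you should note that along any directed cycle in the closure, $h$ is weakly nondecreasing across every $F_{n,2}$-edge (by monotonicity) and strictly increasing across every singleton-routed edge, and that a cycle using only $F_{n,2}$-edges is impossible since $F_{n,2}$ is a poset; this disposes of arbitrary mixed cycles, including those passing through a singleton as a vertex.
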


We make two remarks about our proof of Theorem \ref{thm:finetti-magog}. First, our bijection makes use of a new triangular family that is in bijection with magog triangles, though we defer the description of these \emph{kagog triangles} to Section \ref{sec:kagog-overview}. Second, we view these triangular arrays of positive integers as pyramids of cubes colored gray and white, adhering to appropriate stacking rules. This geometric viewpoint is essential to our proof, which employs an affine transformation and a color inversion to turn a kagog pyramid into a magog pyramid. 

This two-color cube pyramid model may be useful to others interested in studying the enigmatic relationship between ASMs and TSSCPPs. As an example of its potential utility, we  prove the following theorem in Section \ref{sec:gog}.

\begin{theorem}
\label{thm:gog}
Reversing the order of the rows of a $n \times n$ ASMs induces an involution on gog triangles $\gog{n}$. The corresponding involution on two-color cube pyramids reverses the  cube coloring and performs a rigid transformation of the  pyramid.
\end{theorem}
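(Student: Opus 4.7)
The plan is to transport the row-reversal symmetry of $n\times n$ alternating sign matrices through the standard bijection to $\gog{n}$. For an ASM $A$, the $k$-th row $r_k$ of its gog triangle records the increasing sequence of column indices $j$ at which the partial column sum $\sum_{i=1}^{k}A_{i,j}$ equals $1$; because the full column sums of $A$ all equal $1$, we have $r_n=(1,2,\ldots,n)$. Writing $A'$ for $A$ with its rows reversed and summing the first $k$ of them,
$$
\sum_{i=1}^{k}A'_i \;=\; \sum_{i=n-k+1}^{n}A_i \;=\; (1,\ldots,1)\;-\;\sum_{i=1}^{n-k}A_i,
$$
so the positions of the $1$'s in the new partial sum are exactly the complementary positions, and the induced map on triangular arrays sends $r_k$ to $[n]\setminus r_{n-k}$. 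Since $[n]\setminus([n]\setminus S)=S$ and $k\mapsto n-k$ is its own inverse, this is manifestly an involution.

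Next I verify that the operation carries $\gog{n}$ to $\gog{n}$ by checking that complementation preserves the monotone-triangle interlacing. A convenient reformulation: letting $f_k(m)=|\{i:r^k_i\le m\}|$, the interlacing between rows $k$ and $k{+}1$ is equivalent to $f_{k+1}(m)-f_k(m)\in\{0,1\}$ for every $m\in[n]$. Writing $g_k(m)$ for the corresponding counts on the new rows $[n]\setminus r_{n-k}$, one has $g_k(m)=m-f_{n-k}(m)$, and therefore $g_{k+1}(m)-g_k(m)=f_{n-k}(m)-f_{n-k-1}(m)\in\{0,1\}$, exactly what is required. This completes the first assertion of the theorem.

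To read the involution on the two-color cube pyramid, note that each row $r_k\subseteq[n]$ is encoded by coloring the cubes of the $k$-th layer of the pyramid gray or white according to whether each position lies in $r_k$ or in $[n]\setminus r_k$; taking the set-complement of a row is therefore exactly the swap of the two cube colors within that layer. Consequently the map $r_k\mapsto[n]\setminus r_{n-k}$ factors into (i) a global swap of gray and white on every cube of the pyramid, and (ii) a layer reversal exchanging the $k$-th layer with the $(n-k)$-th. Item (i) is the ``reverses the cube coloring'' clause of the theorem, and item (ii) is what must be exhibited as a rigid motion. The main obstacle is making step (ii) genuinely rigid: because the layers have varying widths, the reversal is not a naive vertical flip. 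Using the coordinate description of the pyramid set up earlier in the paper---the same picture in which the kagog--magog correspondence appears as an affine map plus color inversion---I would exhibit an explicit order-two isometry of $\mathbb{R}^3$ that, after the color swap, carries the $k$-th layer onto the $(n-k)$-th and preserves the pyramidal region. I expect this isometry to be a concrete reflection or half-turn of the cube polytope, and the verification to reduce to a direct coordinate comparison; once this is in place both assertions of the theorem follow.
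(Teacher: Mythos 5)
Your first paragraph is correct and in fact more direct than the paper's route to the triangle-level statement: you identify row reversal of $A$ with the complementation $r_k\mapsto[n]\setminus r_{n-k}$ via $\sum_{i=1}^{k}A'_i=\mathbf{1}-\sum_{i=1}^{n-k}A_i$, and your interlacing check with the counting functions $f_k$ cleanly verifies that the image is again a monotone triangle. (The paper instead obtains this as a corollary of the pyramid involution, tracking which integers are \emph{missing} from each row of the partial-sum matrix; the content is the same complementation observation.) So the first assertion of the theorem is genuinely proved.

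The second assertion, however, is exactly where your proposal stops short, and in two ways. First, your encoding of the pyramid is not the one the theorem refers to: in Definition \ref{def:pyramid} applied to ogog triangles, row $i$ of $\oo{G}$ does not become a single layer of $n$ cubes colored by membership in $r_i$; it becomes an $i\times(n-i)$ wall $\{(i,j,k): 1\le j\le i,\ 1\le k\le n-i\}$ in which the \emph{height} of the white tower at $(i,j)$ records the ogog entry $\oo{G}(i,j)=a'_j-j$. Complementing the set $r_i$ is therefore not a color swap within a layer of fixed shape, and your factorization into ``global color swap plus layer reversal'' does not describe a map of the actual pyramid. Second, the rigid motion is never exhibited --- you explicitly defer it (``I would exhibit an explicit order-two isometry...''), and this is the substantive content of the second assertion. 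What is needed is the map sending cube $(i,j,k)$ to $(n-i,k,j)$ together with the color inversion: this is a half-turn about the line $\{i=n/2\}\cap\{j=k\}$, it transposes the wall for row $i$ onto the wall for row $n-i$ (so a set of $\ell$ leading gray cubes in a height-$k$ slice becomes a tower of $\ell$ white cubes, which is how complementation of $r_i$ is realized geometrically), and one must check that the inverted inequalities defining an ogog pyramid are carried to a permutation of themselves. Without naming this isometry and performing that verification, the claim that the involution is ``a color reversal plus a rigid transformation'' remains a conjecture rather than a proof.
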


\subsection{de Finetti Lattices}
\label{sec:csp}

In this section, we motivate the study of $F_{n,2}$ and its poset refinements. We also prove Lemma \ref{lemma:Fn21}
% and explain the name ``de Finetti Lattice.'' 
In various settings, (including probability, computational algebra, and social choice theory) researchers have investigated  total orders of the power set $\mathcal{P}([n])$ 
satisfying the following two conditions:
\begin{enumerate}
\item[(F1)] $\emptyset \prec \{1\} \prec \{2\} \prec \ldots \prec \{n\}$, and 
\item[(F2)] $X \prec Y$ if and only if $X \cup Z \prec Y \cup Z$
for all $Z \subset [n] \mbox{ such that }  (X \cup Y) \cap Z = \emptyset$.
\end{enumerate}
Condition (F1) is the canonical ordering of the singleton sets. Condition (F2) is  de Finetti's axiom \cite{definetti}, which was first formulated in a probabilistic setting.
This axiom can be restated as $A \prec B$  if and only  if $A \backslash B \prec B \backslash A$. 
%In other words, the order of  $A$ and $B$ is independent of their shared elements $A \cap B$.
Intuitively, condidtion (F2) states that when we have comparable sets, adding the same element (or elements) to both sets will not change the comparison. 
%Alternatively, by rephrasing the condition as $A \prec B$  if and only  if $A \backslash B \prec B \backslash A$, we see see that the order of $A$ and $B$ is independent of their shared elements.
For  $n \geq 3$, conditions  (F1) and (F2) do not completely determine a total order; for example, we cannot deduce whether $\{1,2\} \prec \{ 3\}$ or 
$\{3\} \prec \{ 1,2\}$ from these first principles. 

We make two observations about total orders satisfying these conditions. First, induction  on $|B \backslash A|$ shows that if $A \subsetneq B$ then $A \prec B$.  Second, another simple induction proof confirms that  if $x_i \leq y_i$ for $1 \leq i \leq k$ then (F1) and (F2) lead to the (intuitive) conclusion that 
$\{ x_1, x_2, \ldots, x_k \} \preceq \{ y_1, y_2, \ldots, y_k \}$, where equality holds only when these sets are identical. In summary, we have
a linear extension of the Boolean lattice that also extends the standard ordering on the integers $[n]$ to an ordering of the subsets of $[n]$.

Total orders of $\mathcal{P}([n])$ satisfying (F1) and (F2) appear under various names, including comparative probability orders, Boolean term orders, and completely separable preference orders, see OEIS A005806 \cite{oeis}. 
Each of these names reflects the application setting rather than the defining properties of the total order. Therefore, we opt for the generic name \emph{de Finetti total order},  paying homage to de Finetti's axiom. Furthermore, there is no harm in starting with the Boolean lattice $B_n$ rather than the set $\mathcal{P}([n])$, since the set inclusion relations are enforced by (F1) and (F2).

\begin{definition}
\label{def:finetti-order}
A \emph{de Finetti refinement} $(E, \prec_E)$ of the Boolean lattice $(B_{n}, \prec)$ is a poset refinement that adheres to (F1) and  to (F2) for all sets $X,Y \subset [n]$ that are comparable in $E$.
A \emph{de Finetti total order} is a linear extension of $B_n$ that adheres to (F1) and (F2). 
The collection of de Finetti total orders of $B_n$ is denoted $\finetti{n}$.
\end{definition}

%We emphasize that our usage of (F2) in this definition does not require that every pair $X,Y$ is comparable. 
Note that (F2) does not require that all pairs $X$ and $Y$ are comparable, but when they are, the sets $X \cup Z$ and $Y \cup Z$ are also comparable, as are $X \backslash Y$ and $Y \backslash X$.
The number $|\finetti{n}|$ of de Finetti total orders for $1 \leq n \leq 7$ is
$$
1, 1, 2, 14, 546, 169444, 560043206
$$
but there is still no known general formula. 
Enumerations of the 14 de Finetti total orders for $n=4$ can  be found in \cite{fishburn2002,bradley, christian}.
This current work germinated while studying de Finetti total orders: we restricted our attention to the  order ideal $B_{n,2} \subset B_n$ of subsets of size at most $2$, and then considered the poset refinements of $B_{n,2}$ that adhere to (F1) and (F2). 
\begin{definition}
\label{def:finetti2}
A \emph{de Finetti refinement} $(E_2, \prec_{E_2})$ of the order ideal $(B_{n,2}, \prec)$ is a poset refinement that adheres to (F1) and  to (F2) for all sets $X,Y$ that are comparable in $E_2$.
The collection of de Finetti total orders of $B_{n,2}$ is denoted $\finetti{n}{2}$.
\end{definition}

When restricting ourselves to $B_{n,2}$, the conditions (F1) and (F2) simplify to the set inclusion relations (as noted above) plus the relations (R1) and (R2).
% from Section \ref{sec:newintro}. 
Indeed, (F1) clearly implies (R1). As for (R2), let $i < j$ and $k < \ell$. If 
$i < k$ and $j \leq \ell$ then $\{i, j\} \prec \{k, j\} \preceq \{ k, \ell \}$, so $\{i, j\} \prec \{ k, \ell \}$ by transitivity. The case $i \leq k$ and $j < \ell$ proceeds similarly.

Having made this connection, we illuminate how the results in Section \ref{sec:newintro} relate back to de Finetti refinements.
First, we now recognize the  lattice $F_{n,2}$ %from Section \ref{sec:newintro}, 
as the unique minimal de Finetti refinement of $B_{n,2}$. Indeed, any de Finetti refinement of $B_{n,2}$ must adhere to conditions (R1) and (R2), so it must contain the lattice $F_{n,2}$. 
Second, any valid poset refinement of $F_{n,2}$ is automatically a de Finetti refinement. The incomparable pairs $X,Y$ of $F_{n,2}$ are disjoint and at least one must a doubleton set, so $Z=\emptyset$ is the only allowed choice in (F2). In other words, defining $\finetti{n}{2}$ as the collection of de Finetti total orders of $B_{n,2}$ (as in Definition \ref{def:finetti2}) is equivalent to our definition of $\finetti{n}{2}$ as the collection of linear extensions of $F_{n,2}$ (as in Section \ref{sec:newintro}).

We can make a similar observation about our second family of poset refinements. 
% from Section \ref{sec:newintro}.
 Recall that $\finetti{n}{2}{1}$ denotes the collection of minimal poset refinements of the lattice $F_{n,2}$ in which the singleton sets are comparable with all other elements. This is equivalent to the following definition.
%   In other words, we resolve incomparable pairs of the form $\{i\}, \{k,\ell\}$ but do not resolve incomparable pairs of the form $\{i,j\}, \{k,\ell\}$ except when that relation is forced by one of our singleton-doubleton choices. 
 
 \begin{definition}
\label{def:finetti2-1}
The set $\finetti{n}{2}{1}$ is the collection
of minimial de Finetti refinements of $B_{n,2}$ such that the singleton sets are comparable with every set. 
\end{definition}

We now prove Lemma \ref{lemma:Fn21}: each poset refinement in $\finetti{n}{2}{1}$ is a lattice.

%\begin{lemma}  
%\label{lemma:Fn21}
%Every poset in $\mathcal{F}_{n,2}^{1}$ is a lattice.
%\end{lemma}
\begin{proof}[Proof of Lemma \ref{lemma:Fn21}]
 Consider any pair of elements $x,y \in F_{n,2}$. Since $F_{n,2}$ is a  lattice, these elements have  least upper  bound $x\vee y$, and  greatest lower bound $x\wedge y$ in $F_{n,2}$. 
Let $(P, \prec_P)  \in \mathcal{F}_{n,2}^1$. Since $(P,\prec_P)$ is a refinement of  $(F_{n,2}, \prec)$, we know that $x\vee y$ is still an upper bound of $x$ and $y$ in $P$ and
that that $x\wedge y$ is still  a lower bound of $x$ and $y$ in $P$.

We must show $x,y$ have  a least upper bound $x \vee_P y$ and a greatest lower bound $x \wedge_P y$ in $P$. If at least one of $x,y$ is a singleton set, then $x$ and $y$ are comparable in $P$, which means that $x \vee_P y = \max \{ x,y \}$ and 
$x \wedge_P y = \min \{ x,y \}$. 
So we now assume that  both $x$ and $y$ are doubleton sets.
 
Let us prove that $x$ and $y$ have a   least upper bound in $P$.  
Assume for the sake of contradiction that 
%there exists   $z \in P$ such that $x\prec_P z$ and $y\prec_P z$, while $z$ and $x \vee y$ are incomparable in $P$. 
%Choose 
$z \in P$ is a minimal $x,y$ upper bound that is incomparable with $x \vee y$ in $P$.
Recall that the empty set and the singleton sets are comparable with every element in $P$, so 
%$z$ and $x \vee y$ are incomparable in $P$,  
both $z$ and $x \vee y$ must be doubleton sets. Furthermore,
$z$ is an upper bound of at most one of $x$ and $y$ in $F_{n,2}$: otherwise 
$x \vee y \prec z$ in $F_{n,2}$ and therefore $x \vee y \prec_P z$.
 There are two cases.

First, suppose that $z$ is $F_{n,2}$-incomparable with both $x$ and $y$. In order to be comparable in $P$, there must be singletons $s_1,s_2$ (not necessarily distinct) such that  $x\prec_P s_1 \prec_P z$ and $y\prec_P s_2 \prec_P z$ in $P$. Without loss of generality, $s_1 \leq s_2$, which means that $s_2$ is an $x,y$ upper bound in $P$ and  $ s_2 \prec_P z$. This contradicts the minimality of $z$.

Second, suppose (without loss  of generality) that $x \prec z$ in $F_{n,2}$ while $y$ and $z$ are $F_{n,2}$-incomparable. There must be a singleton $s$ such that
$y \prec_P s \prec_P z$. Now, if $x \prec_P s$ then $s$ is an $x,y$ upper bound in $P$, contradicting the minimality of $z$. Otherwise, $s \prec_P x$ which means that $x = x \vee_P y$, which also contradicts the minimality of $z$.

The doubletons $x,y$ also have a greatest lower bound in $P$: this proof is entirely parallel  to the least upper bound proof. 
%Assume for the sake of contradiction that there exists   $z \in P$ such that $z \prec_P x$ and $z \prec_P y$, while $z$ and $x \wedge y$ are incomparable in $P$. 
%Choose $z \in P$ to be a maximal $x,y$ lower bound that is incomparable with $x \wedge y$ in $P$.
%
%Since $z$ and $x \wedge y$ are incomparable in $P$,  both $z$ and $x \wedge y$ must be doubleton sets. Furthermore,
%$z$ is a lower bound of at most one of $x$ and $y$ in $F_{n,2}$: otherwise 
%$z \prec x \wedge y $ in $F_{n,2}$ and therefore $z \prec_P x \wedge y $.
% There are two cases.
%
%First, suppose that $z$ is $F_{n,2}$-incomparable both $x$ and $y$. In order to be comparable in $P$, there must be singletons $s_1,s_2$ (not necessarily distinct) such that  $z \prec_P s_1 \prec_P x$ and $z\prec_P s_2 \prec_P y$ in $P$. Without loss of generality, $s_1 \leq s_2$, which means that $s_1$ is an $x,y$ lower bound in $P$ and  $ z  \prec_P s_1 $. This contradicts the maximality of $z$.
%
%Second, suppose (without loss  of generality) that $z \prec x$ in $F_{n,2}$ while $y$ and $z$ are $F_{n,2}$-incomparable. There must be a singleton $s$ such that
%$z \prec_P s \prec_P y$. Now, if $s \prec_P x$ then $s$ is an $x,y$ lower bound in $P$, contradicting the minimality of $z$. Otherwise, $x \prec_P s$ which means that $x = x \wedge_P y$. (This also contradicts the minimality of $z$.)
%
%
\end{proof}

This concludes our motivation for studying $\finetti{n}{2}$ and $\finetti{n}{2}{1}$. Given our success at characterizing these refinements of $B_{n,2}$, it is natural to formulate analogous research questions for de Finetti refinements of $B_{n,m}$. We defer those formulations to our concluding section.

\subsection{Related Work}

\subsubsection{The Boolean Lattice}

A \emph{partially ordered set} (or \emph{poset} for short) consists of a set $P$ and a binary relation $\preceq$ that is reflexive ($x \preceq x$), antisymmetric (if $x \preceq y$ and $y \preceq x$ then $x=y$) and transitive (if $x \preceq y$ and $y \preceq z$ then $x \preceq z$). 
A \emph{lattice} is a poset such that every pair of elements have a least upper bound and a greatest lower bound.
We obtain a \emph{refinement} of a poset $P$ by adding relations between pairs of incomparable elements of $P$.
A \emph{total order} is a poset in which every pair of elements is comparable. 
A \emph{linear extension} of a poset $P$ is a refinement of $P$ that is a total order.
See Chapter 3 of Stanley \cite{stanley} for an introduction to posets and lattices.

%\blue{We will cut this paragraph and just reference Stanley, but it is helpful to have all these terms here for now.}
 %An \emph{order ideal} is a subset $I \subset P$ that is downward-closed:  if $x \in I$ and $y \preceq x$ then $y \in I$. An \emph{upper (lower) bound} of $x$ and $y$ is an element $z$ such that $x \preceq z$ and $y \preceq z$ ($z \preceq x$ and $z \preceq x$.) A \emph{least upper bound} of $x$ and $y$ is an upper bound $z$ such that if $w$ is also an upper bound of $x$ and $y$, then $z \leq w$. A \emph{greatest lower bound} is defined analogously. A \emph{lattice} is a poset such that every pair of elements have a least upper bound and a greatest lower bound. 

%The Boolean lattice $B_n$
% of subsets of $[n]$ ordered by inclusion is a foundational mathematical structure.
For a poset $P$, let $\cL(P)$ denote the set of linear extensions of $P$.
%The set $\cL(B_n)$ of linear extensions of the Boolean lattice $B_n$ has attracted significant research attention. 
%Let $\cL(B_n)$ denote the set of linear extensions of the Boolean lattice. 
Brightwell and Tetali \cite{brightwell} determined an accurate asymptotic formula for
$| \mathcal{L}(B_n)|$, improving on work of Sha and Kleitman \cite{sha}. The value of 
$|\mathcal{L}(B_n)|$ is known for $1 \leq n \leq 7$, see OEIS A046873 \cite{oeis}. The $n=7$ case was recently determined by Brower and Christensen \cite{brouwer+christensen} using machinery developed to study the game of Chomp played on the Boolean lattice. 
Pruesse and Ruskey \cite{pruesse} introduced the linear extension graph $G(B_n)$ whose  vertex set is $\mathcal{L}(B_n)$ and whose edge set consists of pairs of linear extensions that differ by a single adjacent transposition. Felsner and Massow \cite{felsner+massow} determined the diameter of 
$G(B_n)$.
% that the diameter of this graph is $2^{2n-2} - (n+1) \cdot 2^{n-2}$, 
%resolving a conjecture of Felsner and Reuter \cite{felsner+reuter}.

Researchers have also studied linear extensions of subposets of $B_n$, including the order ideal $B_{n,m}$ of subsets of size at most $m$. Fink and Gregor \cite{fink+gregor} determined the linear extension diameter of the subposet $B_n^{1,k}$ of $B_n$ that is induced by levels 1 and $k$.  Brouwer and Christensen \cite{brouwer+christensen} determined that 
$$
| \cL(B_{n,2}) | = \frac{n! \left( {n \choose 2} + n \right)!}{ \prod_{i=1}^n \left( in - {i \choose 2} \right)}
%= {n+1 \choose 2}! \frac{n!}{\prod_{i=1}^n i( n - (i-1)/2)}
={n+1 \choose 2}!  \frac{1}{\prod_{i=1}^n \left( n - \frac{i-1}{2} \right)}
$$
and computed $| \cL(B_{n,3}) |$ for $n \leq 7$. Comparing this formula with our Proposition \ref{thm:fn2} shows that $n! \cdot | \finetti{n,2}| = o(|\cL(B_{n,2})|)$. In other words,
 the de Finetti linear extensions of $B_{n,2}$ are exceptionally rare.

\subsubsection{de Finetti  Total Orders}

The de Finetti total orders $\finetti{n}$ are total orders of $\powerset{[n]}$ that satisfy both (F1) and (F2). 
%The value of $|\finetti{n}|$ is known for $1 \leq n \leq 7$, see OEIS A005806 \cite{oeis}.
These total orders appear in a variety of settings with names that reflect the application at hand \cite{fine,maclagan,bradley,christian}. 
In probability theory, the total orders in $\finetti{n}$  are known as  \emph{comparative probability orders},  and they enjoy applications in decision theory and economics \cite{kraft,fine,fishburn,slinko}. A comparative probability order $\preceq$ is \emph{additively representable} when there is a probability measure $p : [n] \rightarrow [0,1]$ that induces the order, namely $p(X) \leq p(Y)$ if and only if $X \preceq Y$.

In a more algebraic context, Maclagan \cite{maclagan} refered to total orders in $\finetti{n}$ as   \emph{Boolean term orders}  and studied their combinatorial properties. 
%A single adjacent transposition results in a total order that violates $(F2)$, so the linear extension graph on $\cF_n$ is totally disconnected. Instead, 
Maclagan introduced a \emph{flip} operation between Boolean term orders, which consists of multiple (related) adjacent transpositions so that (F2) still holds. 
The \emph{flip graph} is the graph with vertex set $\finetti{n}$, where two  orders are adjacent when they differ by one flip.
It is an open question whether the flip graph is connected for $n \geq 9$. Christian et al. \cite{christian} further studied flippable pairs of orders and their relation to the polytope of an additively representable order.

In social choice theory, these total orders are called \emph{completely separable preferences} \cite{hodge,bradley}. In this setting, de Finetti's condition ensures that a voter's preference for the outcomes on a subset $S \subset [n]$ of proposals is independent of the outcome of the proposals in $\overline{S}$. Hodge and TerHaar \cite{hodge3} showed that the number of de Finetti total orders satisfies $n! \cdot |\finetti{n}| = o(\cL(B_n))$. In fact, they proved the stronger condition that linear extensions with at least one pair $X,Y$ of  proper nontrivial subsets satisfying condition (F2) are vanishingly rare. Other research on separable preferences  focuses on the \emph{admissibility problem}: which collection of subsets can occur as the collection of \emph{separable} sets $S$, meaning that (F2) holds for any subsets $X,Y \subset S$ and any $Z \subset \overline{S}$, see \cite{hodge3,hodge4,beveridge+calaway}.

\subsubsection{Gog Triangles and Magog Triangles}

Theorem \ref{thm:fn2} establishes a bijection between the de Finetti refinements $\finetti{n}{2}{1}$  and the magog triangles $\magog{n-1}$.  This connects our poset refinement problem to the illustrious family of
 \emph{alternating sign matrices}. See \cite{bressoud+propp,bressoud}, respectively, for a brief or an extended recounting of the history of  the famous alternating sign matrix conjecture. Magog triangles of $\magog{n}$ are in bijection with totally the symmetric self-complementary plane partitions (TSSCPP) in a $2n \times 2n \times 2n$ box. Andrews \cite{andrews} proved that the number of such TSSCPP is given by equation \eqref{eqn:asm}.
%\begin{equation}
%\label{eqn:asm}
%\prod_{k=0}^{n-1} \frac{(3k+1)!}{(n+k)!}.
%\end{equation}
Meanwhile, gog triangles $\gog{n}$ are in bijection with $n \times n$ alternating sign matrices (ASM). Zielberger  \cite{zielberger} proved that $|\magog{n}|=|\gog{n}|$, which confirmed that TSSCPPs and ASMs are equinumerous. Kuperberg \cite{kuperberg} later gave a more streamlined proof using the 6-vertex model from statistical mechanics.  

There are many combinatorial manifestations of the ASM sequence \eqref{eqn:asm},  see \cite{bressoud,propp}. A natural bijective proof between TSSCPPs and ASMs (or equivalently, between magog and gog triangles) remains elusive, though  progress on subfamilies has been achieved \cite{ayyer, biane, striker2018}. Posets and triangular arrays of numbers (such as gog, magog and kagog triangles) continue to play an essential role in ASM and TSSCPP research. Terwilliger \cite{terwilliger} 
defines a poset refinement of the the Boolean lattice $B_n$ whose maximal chains are in natural bijection with ASMs (and hence gog triangles). In comparison, our main result
 puts a family of poset refinements of $B_{n,2}$ in bijection with magog triangles (and hence TSSCPPs). 

In fact, our work may have more in common with Striker \cite{striker2011}, which defines a tetrahedral poset $T_n$ whose subposets trace connections between TSSCPPs, ASMs and other combinatorial sequences.  In particular, $T_n$ has one subposet whose order ideals can be described via families of triangular arrays. The order ideals of one such subposet is in bijection with gog triangles (and hence with ASMs). There are six  distinct subposets whose order ideals (with associated triangular families)  are in bijection with magog triangles (and hence with TSSCPPs). We note that our kagog triangles are not among the triangular families described in \cite{striker2011}, so the family of TSSCPP triangles continues to grow.

\section{Shifted Standard Young Tableau of Staircase Shape and $\finetti{n}{2}$}

\label{sec:finetti-ballot}

This brief section contains a proof of Proposition \ref{thm:fn2}: we give 
a simple bijection between $\finetti{n}{2}$ and shifted standard Young tableaux (shifted SYT) of shape $(n,n-1,\ldots,1)$. 
Figure \ref{fig:fn2} exemplifies the mapping for $n=4$.

\begin{proof}[Proof of Proposition \ref{thm:fn2}.]
To ease exposition, we identify the singleton $\{ i \}$ with the doubleton $\{i,0\}$. 
%%%%%Ignoring the set $\emptyset$,  lay out the lattice $F_{n,2}$ in a diagonal grid so that the larger element $j > i$ in doubleton $\{j,i\}$ increases along the northwest diagonal.  Rotate by $- 3 \pi/4$ to get a shifted staircase grid. Row $k$ of this grid contains the doubletons $\{i,k-1\}$ for $k \leq i \leq n$. This grid induces a shifted staircase Ferrers diagram whose boxes  are indexed the $n(n+1)/2$ nontrivial members of $F_{n,2}$. 
%%%%Consider a total ordering $E \in \finetti{4}{2}$. Place the integer $\ell$ in the box corresponding to the $\ell$th set in total ordering $E$. The rows and columns of the resulting tableau are both increasing because the total ordering must satisfy properties (F1) and (F2) of Definition \ref{def:finetti-lattice}, so the total ordering $E$ maps to a shifted SYT of shifted staircase shape. This mapping is surjective: starting from a SSYT, we can reverse the process to find a total ordering $E \in \finetti{4}{2}$ that maps to it. This completes the proof of Theorem \ref{thm:fn2}. \quad $\Box$
Ignore the set $\emptyset$ and  lay out the lattice $F_{n,2}$ in a shifted staircase grid so that 
row $k$ contains the sets $\{i,k-1\}$ for $k \leq i \leq n$ in increasing order. This grid induces a shifted staircase Ferrers diagram $(n,n-1, \ldots, 1)$ whose boxes  are indexed the $n(n+1)/2$ nontrivial members of $F_{n,2}$.

%%%%%%%%%%%
\begin{figure}[ht!]

\begin{center}
\begin{tikzpicture}

%%%%%%%%%%%%%%%%
%\begin{scope}[shift={(0,0)}]
%
%
%\node (n1) at (0,0)  {1};
%\node[above left of=n1] (n2)  {2};
%\node[above left of=n2] (n3)  {3};
%\node[above left of=n3] (n4)  {4};
%
%
%\node[above right of=n2] (n12) {21};
%\node[above right of=n3] (n13) {31};
%\node[above right of=n4] (n14) {41};
%
%
%\node[above right of=n13] (n23) {32};
%\node[above right of=n14] (n24) {42};
%
%
%\node[above  right of=n24] (n34) {43};
%
%
%
%\draw (n1) -- (n2) -- (n3) -- (n4);
%
%%\draw (n1) -- (n12);
%\draw (n2) -- (n12) -- (n13) -- (n14);
%\draw  (n13) --  (n23) -- (n24);
%\draw (n3) -- (n13);
%\draw (n4) -- (n14) -- (n24) -- (n34);
%
%
%\end{scope}

%%%%%%%%%%%%%%%%
\begin{scope}[shift={(3,0)}]

\node (n1) at (0,0) {1};
\node[right of=n1] (n2)  {2};
\node[right of=n2] (n3)  {3};
\node[right of=n3] (n4)  {4};

\node[below of=n2] (n12) {21};
\node[below of=n3] (n13) {31};
\node[below of=n4] (n14) {41};

\node[below of=n13] (n23) {32};
\node[below of=n14] (n24) {42};

\node[below of=n24] (n34) {43};

\draw (n4) -- (n3) -- (n2) -- (n1);

%\draw (n1) -- (n12);
\draw (n2) -- (n12) -- (n13) -- (n14);
\draw  (n13) --  (n23) -- (n24);
\draw (n3) -- (n13);
\draw (n4) -- (n14) -- (n24) -- (n34);

\node at (1.5,-3.75) {(a)};

\end{scope}

%\begin{scope}[shift={(5.5,-1.5)}]
%
%\node at (0,0) {$\ydiagram{4,1+3,2+2,3+1} 
%$
%};
%
%\end{scope}

%%%%%%%%%%%%%%%%
\begin{scope}[shift={(8.5,0)}]

\node (n1) at (0,0) {1};
\node[right of=n1] (n2)  {2};
\node[right of=n2] (n3)  {3};
\node[right of=n3] (n4)  {4};

\node[below of=n2] (n12) {21};
\node[below of=n3] (n13) {31};
\node[below of=n4] (n14) {41};

\node[below of=n13] (n23) {32};
\node[below of=n14] (n24) {42};

\node[below of=n24] (n34) {43};

\draw (n1) -- (n2) -- (n3) -- (n12) -- (n4) -- (n13) -- (n23) -- (n14) -- (n24) -- (n34) ;

\node at (1.5,-3.75) {(b)};

\end{scope}

\begin{scope}[shift={(15,-1.5)}]

\node at (0,0) {$\begin{ytableau}
1 & 2 & 3 & 5 \\
\none & 4 & 6 & 8 \\
\none & \none & 7 & 9 \\
\none & \none & \none & 10
\end{ytableau}
$
};

\node at (0,-2.25) {(c)};

\end{scope}

\end{tikzpicture}
\end{center}

\caption{Mapping $F_{4,2}$ to a shifted standard Young tableau. 
%We denote the sets $\{i\}$ and $\{j,k\}$ by $i$ and $jk$ where $j > k$.
(a) The nontrivial sets in $F_{4,2}$ laid out in a shifted staircase grid.
%induce the shifted staircase (4,3,2,1). 
(b) A de Finetti total order and (c) its corresponding  shifted standard Young tableau. }

\label{fig:fn2}
\end{figure}
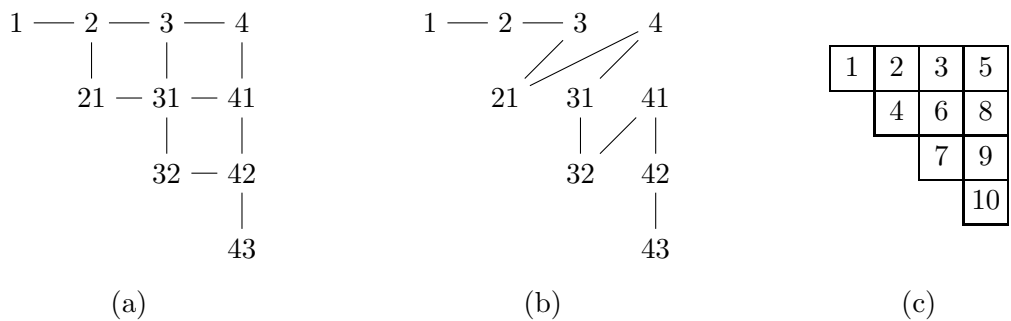

Consider a total order $E \in \finetti{n}{2}$. Place the integer $\ell$ in the box corresponding to the $\ell$th set in total ordering $E$. The result is a  shifted SYT of staircase shape: the rows and columns of the resulting tableau are both increasing because the total ordering satisfies properties (F1) and (F2) of Definition \ref{def:finetti-order}. This mapping is surjective: starting from a shifted SYT, we can reverse the process to find a total order $E \in \finetti{n}{2}$ that maps to it. 
\end{proof}

%This grid induces a shifted staircase Ferrers diagram whose boxes  are indexed the $n(n+1)/2$ nontrivial members of $F_{n,2}$. 

\section{Kagog Triangles, Magog Triangles and $\finetti{n}{2}{1}$}

\label{sec:kagog}

\subsection{Overview}

\label{sec:kagog-overview}

%sLet us turn to the proof of  
%In  this subsection, we define two triangular families: kagog triangles $\kagog{n}$ and magog triangles $\magog{n}$.
 We  outline of the proof of Theorem \ref{thm:finetti-magog}, deferring the details to the subsections that follow. 
We begin by defining the new family of kagog triangles. This name is a variant of Zielberger's gog and magog terminology, and is meant to invoke their  connection to magog triangles.

\begin{definition}
\label{def:kagog}
A kagog triangle $K$ of index $n$ is an array of nonnegative integers $K(i,j)$ such that 
\begin{itemize}
\item[(K1)] $1 \leq j \leq i \leq n-1$, so the array is triangular;
\item[(K2)]  $0 \leq K(i,j) \leq j$, so entries in column $j$ are at most $j$;
\item[(K3)] $K(i,j) \geq K(i+1,j)$, so columns are weakly decreasing; and
\item[(K4)] if $K(i,j) > 0$ then $K(i,j+1) > K(i,j)$, so rows can start with multiple zeros, but then the positive values are strictly increasing. 
\end{itemize}
We use $\kagog{n}$ to denote  the set of kagog triangles of index $n$.
\end{definition}
The elements of $\kagog{3}$ are
\begin{equation}
\label{eqn:kagog3}
\begin{array}{ccc}
1 &  \\
1 & 2  
\end{array}
\qquad
\begin{array}{ccc}
1 &  \\
0 & 2  
\end{array}
\qquad
\begin{array}{ccc}
1 &  \\
0 & 1  
\end{array}
\qquad
\begin{array}{ccc}
1 &  \\
0 & 0  
\end{array}
\qquad
\begin{array}{ccc}
0 &  \\
0 & 2  
\end{array}
\qquad
\begin{array}{ccc}
0 &  \\
0 & 1  
\end{array}
\qquad
\begin{array}{ccc}
0 &  \\
0 & 0  
\end{array}.
\end{equation}
Note that a kagog triangle of index $n$ only has $n-1$ rows and columns.  
%Adding a final all-zero row of length $n$ would reconcile the indexing with the size.  %However, this superfluous row  would actually hamper the proofs that follow, and we opt for this simpler definition.
Our first lemma connects kagog triangles $\kagog{n-1}$ to the poset refinements $\finetti{n}{2}{1}$.

\begin{lemma} 
\label{lemma:finetti-kagog}
The set of de Finetti refinements $\finetti{n}{2}{1}$ is in bijection with the set of kagog triangles $\kagog{n-1}$.
\end{lemma}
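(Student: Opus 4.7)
The plan is to construct an explicit bijection $\Phi \colon \finetti{n}{2}{1} \to \kagog{n-1}$ in two stages. First I would reparameterize $\finetti{n}{2}{1}$ by a monotone level function on doubletons. For $P \in \finetti{n}{2}{1}$ and each doubleton $\{a,b\}$ with $a < b$, let $\tau_P(\{a,b\})$ be the largest $k$ such that $\{k\} \prec_P \{a,b\}$. This is well-defined because every singleton is comparable with every doubleton in $P$ and the singletons form a chain. The set inclusion $\{b\} \prec \{a,b\}$ in $F_{n,2}$ gives $\tau_P(\{a,b\}) \geq b$, and $\tau_P$ is monotone with respect to the doubleton order inherited from $F_{n,2}$: if $\{a_1,b_1\} \prec \{a_2,b_2\}$ in $F_{n,2}$ but $\tau_P(\{a_1,b_1\}) > \tau_P(\{a_2,b_2\})$, then any singleton $\{k\}$ with $\tau_P(\{a_2,b_2\}) < k \leq \tau_P(\{a_1,b_1\})$ would give $\{a_2,b_2\} \prec_P \{k\} \prec_P \{a_1,b_1\}$, contradicting the forced $F_{n,2}$-relation. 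Conversely, any monotone $\tau$ satisfying $\tau(\{a,b\}) \geq b$ recovers a unique $P \in \finetti{n}{2}{1}$ by taking the transitive closure of the singleton-doubleton relations it dictates together with the $F_{n,2}$-relations, so the lemma reduces to a bijection between such monotone functions and $\kagog{n-1}$.

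In the second stage I would transport the level data to a kagog triangle. The doubletons $\{a,n\}$ have $\tau = n$ forced, so only the $\binom{n-1}{2}$ non-trivial doubletons (those with $b \leq n-1$) carry information, matching exactly the entry count of a kagog of index $n-1$. I would match them to kagog positions via a gap-based indexing, for example $(i,j) \leftrightarrow \{i - j + 1, i + 1\}$, so that column $j$ collects precisely the doubletons with gap $b - a = j$, and define $K(i,j)$ to be a normalization of $\tau$ at the associated doubleton (and possibly its neighbors in the gap-chain) that lies in $\{0, 1, \ldots, j\}$. With the right choice, (K1) and (K2) hold by construction; the column-monotonicity (K3) translates the monotonicity of $\tau$ along a fixed-gap chain; and the row-positive-strict-increase rule (K4) captures the cross-gap monotonicity of $\tau$ imposed by the $F_{n,2}$-relations between doubletons of different gaps. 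The inverse map $\Phi^{-1}$ is then obtained by inverting the entry formula position-by-position, and bijectivity follows once one checks that each kagog condition matches exactly one $\tau$-condition.

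The main obstacle is identifying the correct entry formula. Under the natural gap indexing the $\tau$-range $\{b, b+1, \ldots, n\}$ has size $n - i$, while the kagog range $\{0, 1, \ldots, j\}$ has size $j + 1$, and these typically disagree, so $K(i,j)$ is likely not a function of a single $\tau$-value but rather a cumulative or differential quantity linking several adjacent doubletons. Verifying (K4) in particular will be the subtlest step, since its strict-increase rule along a kagog row must line up exactly with the cross-gap comparisons forced among doubletons of different gaps by $F_{n,2}$. Once the formula is calibrated, checking (K3) and the inverse direction should be essentially mechanical from the monotonicity of $\tau$.
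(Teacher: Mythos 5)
Your first stage is sound, and it is essentially a transposed view of what the paper does: the paper decomposes $E \in \finetti{n}{2}{1}$ into sub-extensions $E_k$ recording where each singleton $\{k\}$ sits among the doubletons incomparable with it, whereas you record, for each doubleton, the largest singleton below it. These carry the same information, and your monotonicity observation for $\tau_P$ is correct. (The converse direction --- that the transitive closure of the relations dictated by an arbitrary monotone $\tau$ with $\tau(\{a,b\})\ge b$ is acyclic and hence yields a genuine element of $\finetti{n}{2}{1}$ --- is asserted rather than proved and does need an argument, but it is not the main problem.)

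The genuine gap is in the second stage, which is where the content of the lemma lives: you never produce the entry formula $K(i,j)$, and you explicitly flag that you do not know it. Moreover, the gap-based indexing you propose (column $j$ collects the doubletons with $b-a=j$) is the wrong grouping, and that is precisely why you run into the range mismatch of $n-i$ versus $j+1$. The grouping that works --- and is what the paper's ``count the boxes to the right of the dot'' recipe computes --- is by the \emph{larger element} of the doubleton: column $j$ of the kagog triangle is attached to the $j$ doubletons $\{m,j+1\}$ with $1 \le m \le j$, and the entry in row $k-2$ is
$$
K(k-2,j) \;=\; \#\bigl\{\, m : 1 \le m \le j,\ \{k\} \prec_P \{m,j+1\} \,\bigr\}
\;=\; \#\bigl\{\, m : \tau_P(\{m,j+1\}) < k \,\bigr\}.
$$
With this choice (K2) is immediate (one counts a subset of $j$ objects), (K3) is monotonicity of the count in $k$ (exactly your column-monotonicity of $\tau$ along the chain $\{1,j+1\}\prec\cdots\prec\{j,j+1\}$), and (K4) follows from the $F_{n,2}$-forced relations $\{m,j+1\}\prec\{m,j+2\}$ and $\{m,j+1\}\prec\{m+1,j+2\}$, which show that a positive count in column $j$ forces a strictly larger count in column $j+1$; inverting the map amounts to recovering the sorted list of $\tau$-values in each column from its counting function. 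Until an entry formula of this kind is written down and the kagog axioms are matched to properties of $\tau$, the proposal is a plan rather than a proof.
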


Next, we turn our attention to the well-known family of magog triangles, which are in bijection with TSSCPPs. 
%Our structured definition is well-suited to the conversion to two-color cube pyramids below. 

%\blue{Add proper reference to Mills, Robbins, Rumsey}

\begin{definition} 
\label{def:magog}
A magog triangle $M$ of size $n$ is an array of positive integers $M(i,j)$ where
\begin{itemize}
\item[(M1)] $1\leq j\leq i \leq n$, so the array is triangular;
\item[(M2)] $1 \leq M(i,j)\leq j$, so entries in column $j$ are at most $j$; 
\item[(M3)]  $M(i,j)\leq M(i+1,j)$, so columns are weakly increasing; and
\item[(M4)]  $M(i,j)\leq M(i,j+1)$, so rows are weakly increasing.

\end{itemize}
We use $\magog{n}$ to denote  the set of magog triangles of size $n$.
\end{definition}

When a magog triangle is viewed as a Gelfand-Tsetlin triangle of positive integers, conditions (M2) and (M4) are replaced by $M(j,j) \leq j$ and $M(i,j) \leq M(i+1,j+1)$, respectively.
The elements of $\magog{3}$ are
\begin{equation}
\label{eqn:magog3}
\begin{array}{ccc}
1 \\
1 & 1 \\
1 & 1 & 1 \\
\end{array}
\quad
\begin{array}{ccc}
1 \\
1 & 1 \\
1 & 1 & 2
\end{array}
\quad
\begin{array}{ccc}
1 \\ 
1 & 1 \\
1 & 2 & 2
\end{array}
\quad
\begin{array}{ccc}
1 \\ 
1 & 2 \\ 
1 & 2 & 2
\end{array}
\quad
\begin{array}{ccc}
1 \\ 
1 & 1 \\
1 & 1 & 3
\end{array}
\quad
\begin{array}{ccc}
1 \\ 
1 & 1 \\
1 & 2 & 3
\end{array}
\quad
\begin{array}{ccc}
1 \\ 
1 & 2 \\
1 & 2 & 3
\end{array}.
\end{equation}
Our second lemma addresses a duality between kagog triangles $\kagog{n}$ and magog triangles  $\magog{n}$.

\begin{lemma} 
\label{lemma:magog-kagog}
The set of magog triangles $\magog{n}$ is in bijection with the set of kagog triangles $\kagog{n}$.
\end{lemma}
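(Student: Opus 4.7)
The plan is to construct an explicit bijection $\Phi \colon \kagog{n} \to \magog{n}$ via a geometric two-color cube-pyramid model, following the strategy previewed in Section \ref{sec:newintro}. Given a kagog $K \in \kagog{n}$, I assign a \emph{kagog pyramid} $\pyr{K}$ in which each entry $K(i,j)$ is recorded as the gray portion of a stack of $j$ unit cubes at lattice position $(i,j)$ (the top $j - K(i,j)$ cubes are white). Conditions (K2)--(K4) then translate cleanly into local stacking rules: (K2) says that each column is properly colored, (K3) becomes weak monotonicity of gray heights along the $i$-direction, and (K4) becomes a strict staircase condition along the $j$-direction, activated whenever a column already contains a gray cube.

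An analogous \emph{magog pyramid} $\pyr{M}$ is defined for each $M \in \magog{n}$ by stacking $j - 1$ cubes at position $(i,j)$ (with $1 \leq j \leq i \leq n$) and encoding $M(i,j) - 1$ in the color profile. The magog's forced constraint $M(i,1) = 1$ means the column $j = 1$ stacks are empty, and removing this redundant column leaves a support that differs from the kagog support by a single affine shift of coordinates. I would verify that both pyramids carry $\tfrac{(n-1)n(n+1)}{6}$ cubes in total, and that composing the affine shift with a \emph{color inversion} (swapping the two colors) converts the kagog stacking rules (K2)--(K4) into the magog stacking rules (M2)--(M4). Defining $\Phi(K)$ to be the magog read off the resulting pyramid yields a map whose inverse is obtained by undoing the two operations, so $\Phi$ is a bijection.

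The main technical obstacle is the careful translation of (K4) --- a piecewise condition combining an initial run of zeros with a subsequent strict increase --- into the uniform weak monotonicity (M4). The color inversion reverses all monotonicity directions, so (K3) maps neatly onto (M3), but the interaction with (K4) is subtle: the kagog's strict-increase clause must exactly absorb the slack produced by the color swap, while the ``initial zeros'' prefix must correspond precisely to the run where a magog row has saturated its upper bound. This is where the pyramid picture pays off, since the correspondence is visually immediate once cubes are stacked, whereas bookkeeping at the level of indices alone is considerably more cumbersome. Once this step is verified, the bijection $\Phi$ is complete and Lemma \ref{lemma:magog-kagog} follows.
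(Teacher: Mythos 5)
Your high-level strategy is the one the paper itself announces (two-color pyramids, a color inversion, an affine transformation), but the proposal stops exactly where the actual content of the proof begins, and the partial claims you do make about how the constraints correspond are not correct. The key point is that the required affine map is \emph{not} a ``shift'': in the paper it is the composition of a shear (pushing each column north), a quarter-turn that tips the pyramid onto its side (exchanging the row axis with the vertical axis), and a reflection, sending cube $(i,j,k)$ to $(n-k,\,n-j,\,i-j+1)$. Because of this tipping, the constraint correspondence is \emph{not} (K3)$\leftrightarrow$(M3) and (K4)$\leftrightarrow$(M4) under color inversion, as you assert. Rather, the magog column condition becomes the kagog gravity condition, the magog gravity condition becomes the kagog column condition (K3), and the magog row condition becomes a \emph{diagonal} inequality $\pyr{P}(i,j,k)\geq \pyr{P}(i,j-1,k-1)$ which, only in combination with gravity, yields the strict-increase clause (K4).

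One can see concretely that your version, as described, fails: any map that keeps rows as rows, columns as columns, inverts colors, and reindexes by a translation sends the weak row condition (M4) to an inequality of the form $K(i,j+1)\leq K(i,j)+c$, which neither is nor implies (K4). For instance, the omagog triangle with rows $0$ and $0\;1$ (i.e.\ the magog triangle $1;\,1\;1;\,1\;1\;2$) complements entrywise to the array with rows $1$ and $1\;1$, whose second row violates (K4) since its first entry is positive but the row is not strictly increasing. So the ``main technical obstacle'' you defer --- absorbing the slack of the color swap into the strict-increase clause --- cannot be resolved within the framework you set up; it genuinely requires identifying the rotation/shear that re-routes which pyramid direction realizes which triangle condition. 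Until that map is written down and the transformed inequalities are checked against (K1)--(K4), there is no bijection.
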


The magog triangles listed in equation \eqref{eqn:magog3} are ordered so that that they biject to the kagog triangles in equation \eqref{eqn:kagog3}.
Also, note that we have chosen to left-justify our triangles (they are often presented using center alignment). This layout choice simplifies our geometric arguments.
The key to proving Lemma \ref{lemma:magog-kagog} is to convert each of these triangles into a pyramid of  stacked cubes, colored gray or white, so that gray cubes cannot appear above white cubes. We offer a generic definition for pyramid construction, which applies to any family $\mathcal{T}_n$ of triangular arrays that form a distributive lattice using the natural partial ordering $T_1 \prec T_2$ whenever $T_1(i,j) \leq T_2(i,j)$ for $1 \leq j \leq i \leq n$. %In this case, we use  $\mini{T}$ and $\maxi{T}$  denote the minimum and maximum triangles in $\mathcal{T}_n$.
This includes  magog triangles $\magog{n}$ and kagog triangles $\kagog{n}$, as well as gog triangles $\gog{n}$ (defined below).

\begin{definition}
\label{def:pyramid}
Let $\mathcal{T}_n$ be a finite distributive lattice of triangular arrays of positive integers $T = T(i,j) $ where $1 \leq j \leq i \leq n$  with minimal triangle $\mini{T}$ and maximal triangle $\maxi{T}$. Define $\pyr{T}$ to be the two-color pyramid of cubes $(i,j,k)$ where $1 \leq i \leq j \leq n$ and $1 \leq k \leq \maxi{T}(i,j)$ where the tower of cubes at $(i,j)$ consists of $T(i,j)$ white cubes below $\maxi{T}(i,j)-T(i,j)$ gray cubes. Define $\pyr{\mathcal{T}_n} = \{ \pyr{T} : T \in \mathcal{T}_n \}$ to be the collection of two-color pyramids.
\end{definition}

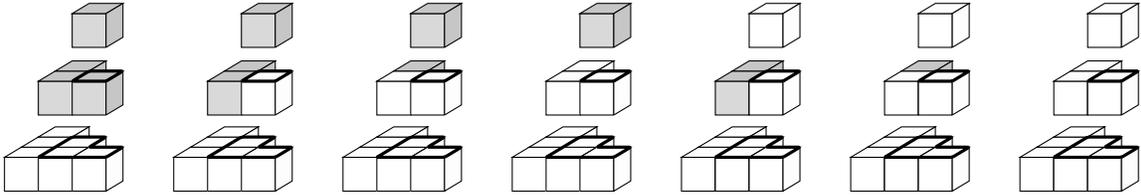
\begin{figure}[ht!]
    \centering
\begin{tikzpicture}[scale=0.45]

%first stack
\begin{scope}[shift={(0,0)}]
\twocolor{0/3}{0/2,0/1}
\draw[rounded corners = 0.3mm, very thick, black] (0 cm, 1cm) -- (2 cm, 1 cm) -- (2.5 cm,1.3 cm)--(1.5 cm, 1.3 cm) -- (2 cm,1.6 cm)--(1 cm, 1.6cm)--(0 cm, 1 cm);
\begin{scope}[shift={(1,2.25)}]
\twocolor{2/0}{1/0}
\draw[rounded corners = 0.3mm, very thick, black] (1 cm,1)--(0cm,1cm)--(.5cm,1.3cm)--(1.5cm,1.3cm)--(1,1);
\end{scope}
\begin{scope}[shift={(2,4.25)}]
\twocolor{1/0}{}
\end{scope}
\end{scope}

%second stack
\begin{scope}[shift={(5,0)}]
\twocolor{0/3}{0/2,0/1}
\draw[rounded corners = 0.3mm, very thick, black] (0 cm, 1cm) -- (2 cm, 1 cm) -- (2.5 cm,1.3 cm)--(1.5 cm, 1.3 cm) -- (2 cm,1.6 cm)--(1 cm, 1.6cm)--(0 cm, 1 cm);
\begin{scope}[shift={(1,2.25)}]
\twocolor{1/1}{1/0}
\draw[rounded corners = 0.3mm, very thick, black] (1 cm,1)--(0cm,1cm)--(.5cm,1.3cm)--(1.5cm,1.3cm)--(1,1);
\end{scope}
\begin{scope}[shift={(2,4.25)}]
\twocolor{1/0}{}
\end{scope}
\end{scope}

%third stack
\begin{scope}[shift={(10,0)}]
\twocolor{0/3}{0/2,0/1}
\draw[rounded corners = 0.3mm, very thick, black] (0 cm, 1cm) -- (2 cm, 1 cm) -- (2.5 cm,1.3 cm)--(1.5 cm, 1.3 cm) -- (2 cm,1.6 cm)--(1 cm, 1.6cm)--(0 cm, 1 cm);
\begin{scope}[shift={(1,2.25)}]
\twocolor{0/2}{1/0}
\draw[rounded corners = 0.3mm, very thick, black] (1 cm,1)--(0cm,1cm)--(.5cm,1.3cm)--(1.5cm,1.3cm)--(1,1);
\end{scope}
\begin{scope}[shift={(2,4.25)}]
\twocolor{1/0}{}
\end{scope}
\end{scope}

%fourth stack
\begin{scope}[shift={(15,0)}]
\twocolor{0/3}{0/2,0/1}
\draw[rounded corners = 0.3mm, very thick, black] (0 cm, 1cm) -- (2 cm, 1 cm) -- (2.5 cm,1.3 cm)--(1.5 cm, 1.3 cm) -- (2 cm,1.6 cm)--(1 cm, 1.6cm)--(0 cm, 1 cm);
\begin{scope}[shift={(1,2.25)}]
\twocolor{0/2}{0/1}
\draw[rounded corners = 0.3mm, very thick, black] (1 cm,1)--(0cm,1cm)--(.5cm,1.3cm)--(1.5cm,1.3cm)--(1,1);
\end{scope}
\begin{scope}[shift={(2,4.25)}]
\twocolor{1/0}{}
\end{scope}
\end{scope}

%fifth stack
\begin{scope}[shift={(20,0)}]
\twocolor{0/3}{0/2,0/1}
\draw[rounded corners = 0.3mm, very thick, black] (0 cm, 1cm) -- (2 cm, 1 cm) -- (2.5 cm,1.3 cm)--(1.5 cm, 1.3 cm) -- (2 cm,1.6 cm)--(1 cm, 1.6cm)--(0 cm, 1 cm);
\begin{scope}[shift={(1,2.25)}]
\twocolor{1/1}{1/0}
\draw[rounded corners = 0.3mm, very thick, black] (1 cm,1)--(0cm,1cm)--(.5cm,1.3cm)--(1.5cm,1.3cm)--(1,1);
\end{scope}
\begin{scope}[shift={(2,4.25)}]
\twocolor{0/1}{}
\end{scope}
\end{scope}

%sixth stack
\begin{scope}[shift={(25,0)}]
\twocolor{0/3}{0/2,0/1}
\draw[rounded corners = 0.3mm, very thick, black] (0 cm, 1cm) -- (2 cm, 1 cm) -- (2.5 cm,1.3 cm)--(1.5 cm, 1.3 cm) -- (2 cm,1.6 cm)--(1 cm, 1.6cm)--(0 cm, 1 cm);
\begin{scope}[shift={(1,2.25)}]
\twocolor{0/2}{1/0}
\draw[rounded corners = 0.3mm, very thick, black] (1 cm,1)--(0cm,1cm)--(.5cm,1.3cm)--(1.5cm,1.3cm)--(1,1);
\end{scope}
\begin{scope}[shift={(2,4.25)}]
\twocolor{0/1}{}
\end{scope}
\end{scope}

%seventh stack
\begin{scope}[shift={(30,0)}]
\twocolor{0/3}{0/2,0/1}
\draw[rounded corners = 0.3mm, very thick, black] (0 cm, 1cm) -- (2 cm, 1 cm) -- (2.5 cm,1.3 cm)--(1.5 cm, 1.3 cm) -- (2 cm,1.6 cm)--(1 cm, 1.6cm)--(0 cm, 1 cm);
\begin{scope}[shift={(1,2.25)}]
\twocolor{0/2}{0/1}
\draw[rounded corners = 0.3mm, very thick, black] (1 cm,1)--(0cm,1cm)--(.5cm,1.3cm)--(1.5cm,1.3cm)--(1,1);
\end{scope}
\begin{scope}[shift={(2,4.25)}]
\twocolor{0/1}{}
\end{scope}
\end{scope}

\end{tikzpicture}

\caption{The two-color pyramids from $\pyrmagog{3}$, sliced into horizontal layers. The shadow of each layer is thickly drawn on the layer below.}

\label{fig:magog-pyr}

\end{figure}

Figure \ref{fig:magog-pyr} shows the seven magog pyramids, listed in the same order as  in equation \eqref{eqn:magog3}. To facilitate visualization, the pyramids have been sliced into layers of equal height.
This two-color pyramid mapping is a variation of the standard interpretation triangular array $T$ as a stack of cubes where the tower at $(i,j)$ has height $T(i,j)$. Indeed, we can view the white cubes as present and the gray cubes as absent. In our proof, tracking the absent cubes is essential, so the two-color pyramids are more illuminating.
Intuitively, the bijection from magog triangles to kagog  triangles corresponds to removing the bottom layer of the magog pyramid, then swapping the colors of the cubes and finally performing an appropriate affine transformation.

\begin{proof}[Proof of Theorem \ref{thm:finetti-magog}]
Follows immediately from Lemma \ref{lemma:finetti-kagog} and Lemma \ref{lemma:magog-kagog}.
\end{proof}

We prove Lemma \ref{lemma:finetti-kagog} and Lemma \ref{lemma:magog-kagog} in the next two subsections.

\subsection{The bijection from $\finetti{n}{2}{1}$ to  $\cK_{n-1}$}

In this subsection, we prove Lemma \ref{lemma:finetti-kagog}.
 Figure \ref{fig:345} shows the de Finetti lattice $F_{n,2}$ for  $n=3,4,5$ and also indicates the sublattice
\begin{equation}
\label{eqn:undetermined}
I_k = \left\{ \{j,i\} \mid  1 \leq i < j< k \right\} 
\end{equation}
of doubletons that are incomparable with   singleton $ \{k \}$.
%, along with their inherited comparisons.
%When  convenient, we write $i$ in place of $\{ i \}$ and  $ji$ in place of $\{j,i\}$, listing the larger element first. 

%%%%%%%%%%%
\begin{figure}[ht!]

\begin{center}
\begin{tikzpicture}

%%%%%%%%%%%%%%%%%
\begin{scope}[shift={(0,0)}]

%\fill[gray!50,rounded corners=10pt] (-1,1) -- (0,2) --  (1,1) -- (0,0) --cycle;

%\draw[gray, dashed,rounded corners=10pt] (.3,2.1) -- (.95,2.75) -- (.95,1.45) --cycle;

%\draw (.25, 2.1) -- (1, 2.1);

\draw[gray, dashed] (.7, 2.1) circle (.3);

\node at (1.45,2.35) {$I_3$};

\node (n0) at (0,0) {$\emptyset$};
\node[above right of=n0] (n1)  {1};
\node[above left of=n1] (n2)  {2};
\node [above left of=n2]  (n3)  {3};

\node[ above right  of=n2] (n12) {21};
\node[ above right of=n3] (n13) {31};
\node[ above right of=n13] (n23) {32};

\draw (n3) -- (n2) -- (n1) -- (n0);

%\draw (n1) -- (n12);
\draw (n2) -- (n12);
%\draw (n12) -- (n23);
\draw (n3) -- (n13);
\draw (n12) -- (n13) -- (n23);

\node at (-1.25,.75) {$n=3$};

\end{scope}

%%%%%%%%%%%%%%%%
\begin{scope}[shift={(5,0)}]

\draw[gray, dashed,rounded corners=10pt] (-.45,2.8) -- (1,4.25) -- (1,1.45) --cycle;
\node at (1.45,2.35) {$I_4$};

\node (n0) at (0,0) {$\emptyset$};;
\node[above right of=n0]  (n1)  {1};
\node[above left of=n1] (n2)  {2};
\node[above left of=n2] (n3)  {3};
\node[above left of=n3] (n4)  {4};

\node[above right of=n2] (n12) {21};
\node[above right of=n3] (n13) {31};
\node[above right of=n13] (n23) {32};

\node[above right of=n4] (n14) {41};
\node[above right of=n14] (n24) {42};
\node[above  right of=n24] (n34) {43};

\draw (n4) -- (n3) -- (n2) -- (n1) -- (n0);

%\draw (n1) -- (n12);
\draw (n2) -- (n12) -- (n13) -- (n14) -- (n24);
\draw  (n13) --  (n23) -- (n24) -- (n34);
\draw (n3) -- (n13);
\draw (n4) -- (n14);
%\draw(n12) -- (n23);

\node at (-1.25,.75) {$n=4$};
\end{scope}

%%%%%%%%%%%%%%%%
\begin{scope}[shift={(10,0)}]

\draw[gray, dashed,rounded corners=10pt] (-1.15,3.5) -- (1,5.65) -- (1,1.35) --cycle;
\node at (1.45,2.35) {$I_5$};

\node (n0) at (0,0) {$\emptyset$};;
\node[above right of=n0]  (n1)  {1};
\node[above left of=n1] (n2)  {2};
\node[above left of=n2] (n3)  {3};
\node[above left of=n3] (n4)  {4};
\node[above left of=n4] (n5)  {5};

\node[above right of=n2] (n12) {21};
\node[above right of=n3] (n13) {31};
\node[above right of=n4] (n14) {41};
\node[above right of=n5] (n15) {51};

\node[above right of=n13] (n23) {32};
\node[above right of=n14] (n24) {42};
\node[above right of=n15] (n25) {52};

\node[above  right of=n24] (n34) {43};
\node[above  right of=n25] (n35) {53};
\node[above  right of=n35] (n45) {54};

\draw (n5) -- (n4) -- (n3) -- (n2) -- (n1) -- (n0);

%\draw (n1) -- (n12);
\draw (n2) -- (n12) -- (n13) -- (n14) -- (n15);
\draw  (n13) --  (n23) -- (n24) -- (n25);
\draw (n3) -- (n13);
\draw (n4) -- (n14) -- (n24) -- (n34) -- (n35);
%\draw(n12) -- (n23);
\draw (n5) -- (n15) -- (n25) -- (n35) -- (n45);

\node at (-1.25,.75) {$n=5$};

\end{scope}

\end{tikzpicture}
\end{center}

\caption{The lattice $F_{n,2}$  induced by $1 \prec 2 \prec \cdots \prec n$ and de Finetti's condition for $n=3,4,5$. The set $I_n$  contains the doubletons whose comparison with the singleton $n$ is not determined by de Finetti's condition. }

\label{fig:345}
\end{figure}
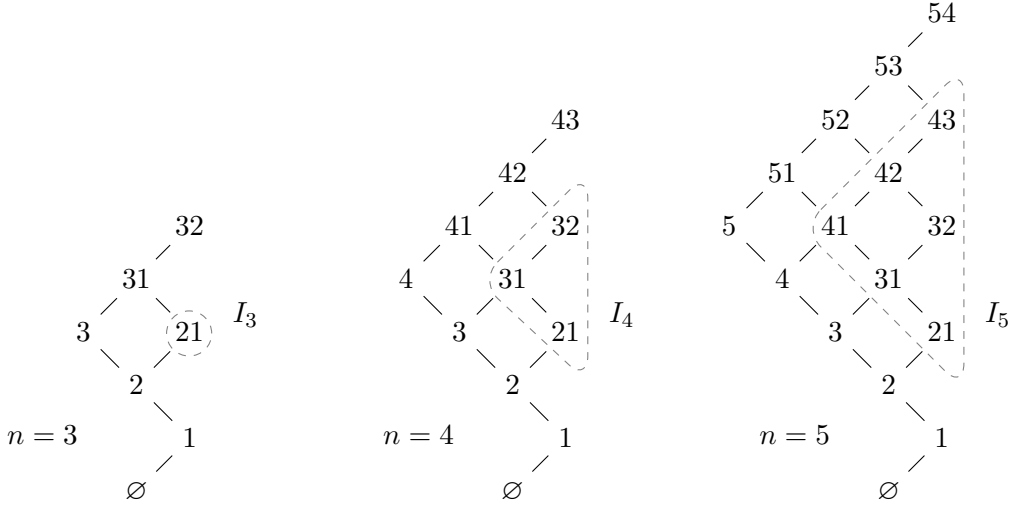

For $k \geq 3$, let $\Phi(I_k)$ be the collection of de Finetti extensions of $I_k \cup \{ k \}$ for which the singleton $\{ k \}$ is comparable with every doubleton of $I_k$ (and no additional extraneous relations). When we restrict a poset extension $E \in \finetti{n}{2}{1}$ to the set $I_k \cup \{ k \}$, we obtain some  $E_k \in \Phi(I_k)$. Similarly, we can induce a unique poset extension $E$ of $F_{n,2}$ from  a list $(E_3, E_4, \ldots, E_n)$ where $E_k \in \Phi(I_k)$. We will have $E \in \finetti{n}{2}{1}$  provided that the union of these orderings does not violate de Finetti's condition (F2). Figure \ref{fig:Fn21example} gives an example of a poset extention $E \in \finetti{n}{2}{1}$ and its collection of $E_k \in \Phi(I_k)$.

\begin{figure}[ht!]
    \centering
\begin{tikzpicture}

\node (n0) at (0,0) {$\emptyset$};
\node[above right of=n0]  (n1)  {1};
\node[above left of=n1] (n2)  {2};
\node[above left of=n2] (n12) {21};
\node[above right of=n12] (n3)  {3};
\node[above right of=n3] (n13) {31};
\node[above left of=n13] (n4)  {4};
\node[above left of=n4] (n14) {41};
\node[above right of=n14] (n5)  {5};

\node[above left of=n5] (n15) {51};

\node[above right of=n4] (n23) {32};
\node[above right of=n5] (n24) {42};
\node[above right of=n15] (n25) {52};

\node[above  right of=n24] (n34) {43};
\node[above  right of=n25] (n35) {53};
\node[above  right of=n35] (n45) {54};

\foreach \i/\j in {0/1,1/2,2/12,3/12,3/13,4/13,4/14,4/23,14/5,23/5,5/15,5/24,15/25,24/25,24/34,25/35,34/35,35/45}
\draw (n\i)--(n\j);

\node[left of=n3, node distance=1.4cm] (l3) {};
\node[right of=n3, node distance=1.4cm] (r3)  {};
%\draw[dashed] (l3)--(n3)--(r3);

\node[left of=n4, node distance=1.4cm] (l4) {};
\node[right of=n4, node distance=1.4cm] (r4)  {};
%\draw[dashed] (l4)--(n4)--(r4);

\node[left of=n5, node distance=1.4cm] (l5) {};
\node[right of=n5, node distance=1.4cm] (r5)  {};
%\draw[dashed] (l5)--(n5)--(r5);

\node [below right of=r3, node distance=0.5cm] (e) {$E$};

\begin{scope}[shift={(6,5)}]
\node (n12) at (0,0) {21};
\node[above right of=n12] (n13) {31};
\node[above left of=n13] (n14) {41};
\node[above right of=n13] (n23) {32};
\node[above right of=n14] (n5)  {5};
\node[above right of=n5] (n24) {42};
\node[above  right of=n24] (n34) {43};

\node [right of=n23, node distance=1.2cm] (e) {$E_5$};

\foreach \i/\j in {12/13,13/14,13/23,14/5,23/5,5/24,24/34}
\draw (n\i)--(n\j);
\end{scope}

\begin{scope}[shift={(6,2.2)}]
\node (n12) at (0,0) {21};
\node[above right of=n12] (n13) {31};
\node[above right of=n13] (n4) {4};
\node[above right of=n4] (n23) {32};

\node [right of=n13, node distance=2cm] (e) {$E_4$};

\foreach \i/\j in {12/13,13/4,4/23}
\draw (n\i)--(n\j);
\end{scope}

\begin{scope}[shift={(6,0)}]
\node (n12) at (0,0) {21};
\node[above right of=n12] (n3) {3};

\node [right of=n3, node distance=2cm] (e) {$E_3$};

\foreach \i/\j in {12/3}
\draw (n\i)--(n\j);
\end{scope}

\node (a) at (1,-1) {(a)};
\node (b) at (7,-1) {(b)};

\end{tikzpicture}
    \caption{(a) A poset extension $E$ from $\finetti{n}{2}{1}$. Each singleton is comparable with every other set. (b) The subposets $E_3$, $E_4$ and $E_5$ of $E$.}
    \label{fig:Fn21example}
\end{figure}
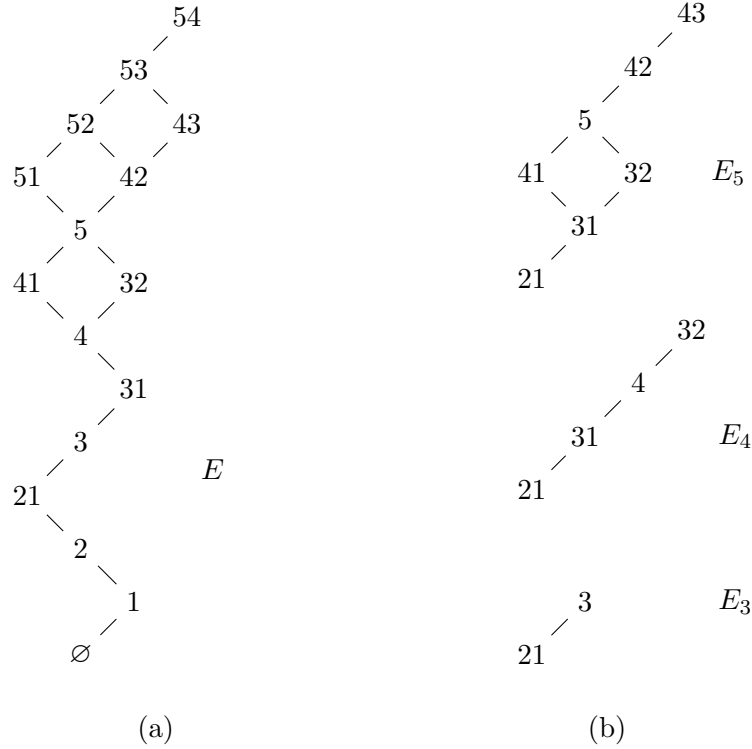

Our  bijection from the poset extensions of $F_{n,2}$ in $\finetti{n}{2}{1}$ to the kagog triangles in $\kagog{n-1}$ proceeds as follows. Given a de Finetti extension $E \in \finetti{n}{2}{1}$, we create the corresponding list $(E_3, \ldots E_n)$ where $E_k \in \Phi(I_k)$. We then map extension $E$ to a kagog triangle $K \in \kagog{n-1}$ so that extension $E_j$ maps to row $j-2$ of triangle $K$ for $3 \leq j \leq n$. The row constraint (K3) of the kagog triangle will correspond to the internal structure of each $E_k$. The column constraint (K2) of the kagog triangle will correspond to having singleton de Finetti extensions $(E_3, E_4, \ldots, E_n)$ whose union also abides by de Finetti's condition.

We begin by introducing a convenient $k$-list version of the power set $\cP([k])$. Let
$$
\cL([k]) = \big\{ 
(\underbrace{0, \ldots, 0}_{k-j}, s_1, s_2, \ldots , s_j ) 
\mid 0 \leq j \leq k \mbox{ and } 1 \leq s_1 < s_2 < \cdots < s_j \leq k
\big\}
$$
be the set of $k$-lists produced by listing the elements of $S \subset [k]$ in increasing order and then prepending $k - |S|$ zeros.

\begin{lemma}
Each row $1 \leq k \leq n-1$ of a kagog triangle in $\kagog{n}$ is an element of $\cL([k])$.
\end{lemma}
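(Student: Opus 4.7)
The plan is to unpack the definition of a kagog triangle and verify that each axiom translates directly into the corresponding feature of an element of $\cL([k])$. Fix a kagog triangle $K \in \kagog{n}$ and a row index $k$ with $1 \leq k \leq n-1$; row $k$ is the tuple $(K(k,1), K(k,2), \ldots, K(k,k))$ of length $k$.

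First I would establish that the zeros in row $k$ occur as a (possibly empty) prefix. This is the content of axiom (K4): if $K(k,j) > 0$ then $K(k,j+1) > K(k,j) > 0$, so once the row becomes positive it stays positive. A trivial induction on $j$ makes this precise. Let $j$ denote the number of positive entries; then row $k$ has the shape $(\underbrace{0, \ldots, 0}_{k-j}, s_1, s_2, \ldots, s_j)$ with each $s_m > 0$.

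Next I would verify the strict increase $s_1 < s_2 < \cdots < s_j$: this is also immediate from (K4), applied to each consecutive pair of positive entries. Finally, I would obtain the upper bound $s_j \leq k$ from axiom (K2), which gives $K(k,m) \leq m \leq k$ for every $m$, so in particular every $s_m$ lies in $[k]$. Combining these three observations shows row $k$ satisfies the defining shape of a list in $\cL([k])$.

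The proof is essentially a bookkeeping exercise and I do not anticipate any genuine obstacle. The only point worth stating carefully is the first step — that the zeros must form a prefix rather than be scattered — since this is the condition (K4) doing real work, whereas the strict increase and the size bound are more visibly built into (K4) and (K2).
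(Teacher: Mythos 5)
Your proof is correct and takes essentially the same approach as the paper, which simply observes that the row constraints of a kagog triangle coincide with the defining conditions of a list in $\cL([k])$. You are in fact slightly more careful than the paper's one-line proof, which cites only (K4), whereas the upper bound $s_j \leq k$ genuinely requires (K2) as you note.
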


\begin{proof}
The constraint (K4) on row $k$ of  a kagog triangle in $\kagog{n}$  is identical to the conditions on a list in $\cL([k])$. 
\end{proof}

\begin{lemma}
\label{lem:template}
For $n \geq 3$,   $\Phi(I_n)$  is in bijection with $\cL([n-2])$.
\end{lemma}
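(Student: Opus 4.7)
The plan is to split the bijection into two stages: first identify $\Phi(I_n)$ with the set of order ideals of $I_n$, then encode each ideal as a list in $\cL([n-2])$.

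\textbf{Stage 1.} An extension $E \in \Phi(I_n)$ is determined uniquely by the set $D_E = \{\, d \in I_n \mid d \prec_E \{n\}\,\}$, since we add only the relations needed to make $\{n\}$ comparable with every doubleton. Transitivity forces $D_E$ to be a down-set of $I_n$: if $d_1 \prec d_2$ in $I_n$ and $d_2 \in D_E$, then $d_1 \prec_E \{n\}$ as well, so $d_1 \in D_E$. Conversely, any down-set $D \subseteq I_n$ determines a valid extension, and de Finetti's axiom (F2) imposes no further constraint within $B_{n,2}$ (only $Z = \emptyset$ is admissible, as noted earlier). Hence $\Phi(I_n)$ is in bijection with the set of order ideals of $I_n$.

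\textbf{Stage 2.} For each $i \in \{1, \ldots, n-2\}$, the doubletons $\{i, i+1\}, \{i, i+2\}, \ldots, \{i, n-1\}$ forming ``column $i$'' of $I_n$ comprise a chain under componentwise order, so any ideal $D$ meets this column in a bottom segment of length $b_i(D) := |\{\, j : \{i,j\} \in D\,\}|$, with $0 \leq b_i(D) \leq n-1-i$. I claim the order-ideal condition reduces to the single implication
\[
b_i(D) \geq 1 \;\Longrightarrow\; b_{i-1}(D) \geq b_i(D) + 1,
\]
since $\{i-1, i + b_i(D)\}$ lies componentwise below $\{i, i + b_i(D)\}$; iterating this inequality will recover all required ideal closures.

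Next, define $\psi(D) = (r_1, \ldots, r_{n-2})$ by reversing the column counts, $r_k := b_{n-1-k}(D)$. Then the bound translates to $r_k \leq k$, and the ideal constraint becomes $r_k \geq 1 \Longrightarrow r_{k+1} \geq r_k + 1$. Iterating, once some $r_k$ is positive every subsequent $r_{k'}$ is strictly larger, so $\psi(D)$ consists of a (possibly empty) prefix of zeros followed by strictly increasing positives bounded by $n-2$---this is precisely the defining condition of $\cL([n-2])$. The inverse reconstructs the column counts column by column, and the two maps are mutually inverse.

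The main subtlety is choosing the correct index reversal. It is needed both to align the column cap $b_i \leq n-1-i$ with the positional cap $r_k \leq k$ implicit in $\cL([n-2])$, and to flip the strict-increase implication so that it propagates left to right in the list, matching condition (K4) on a kagog-triangle row. Once the reversal is in place, the two encodings match term by term.
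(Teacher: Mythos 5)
Your proof is correct, but it takes a genuinely different route from the paper's. The paper argues recursively: it organizes $I_n$ into the rows of the Ferrers diagram $\lambda_{n-1}=(n-1,n-2,\ldots,2)$ according to the \emph{larger} element of each doubleton, places a dot in the top row to fix the position of $\{n\}$ within the chain $\{n-1,1\}\prec\cdots\prec\{n-1,n-2\}$, observes that this choice forces the dots outside a translated copy of $\lambda_{n-k-1}$, and recurses; the output list records the number of boxes to the right of each dot. You instead factor the problem through order ideals: you identify $\Phi(I_n)$ with down-sets $D_E=\{d \mid d\prec_E\{n\}\}$ of $I_n$ (legitimate, since only $Z=\emptyset$ is admissible in (F2) here, exactly as the paper notes in Section \ref{sec:csp}), and then encode an ideal by the sizes of its intersections with the chains indexed by the \emph{smaller} element of the doubletons, reversed; your verification that the adjacent-column implication $b_i\geq 1\Rightarrow b_{i-1}\geq b_i+1$ captures the full ideal condition, and that the reversed lists are exactly $\cL([n-2])$, checks out. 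Your argument is non-recursive and makes the count $|\Phi(I_n)|=2^{n-2}$ transparent. One caveat worth flagging: the two constructions produce \emph{different} bijections. For $n=3$ the paper sends the extension with $\{2,1\}\prec\{3\}$ to $(0)$ while yours sends it to $(1)$; in general the paper records, row by row, how many doubletons lie \emph{above} $\{n\}$, whereas you record, column by column, how many lie \emph{below}. This is immaterial for the lemma as stated, but the specific map $f$ built here is reused in the proof of Lemma \ref{lemma:finetti-kagog}, where the kagog column constraint (K3) is derived from the precise form of $f$; substituting your encoding there would require redoing that compatibility check.
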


Let us build some intuition with two examples. First, we consider extensions in $\Phi(I_5)$. 
We must determine the comparisons of the singleton $\{5\}$ with the doubletons in the lattice $I_5$. By interweaving empty boxes among the doubletons, we obtain the template 
\begin{center}
\begin{tikzpicture}[scale=.3]

\begin{scope}

\node at (0,0) 
{$\displaystyle
\begin{array}{ccccccc}
\Box& 41& \Box &  42 & \Box & 43 & \Box \\
\Box& 31 & \Box &  32 & \Box \\
 \Box &  21 & \Box
\end{array}
$};

\node at (12,0) {$\longleftrightarrow$};

\end{scope}

\begin{scope}[shift={(17,-4.5)}]

\sq{0}{5}
\sq{1}{5}
\sq{2}{5}
\sq{3}{5}
\sq{0}{4}
\sq{1}{4}
\sq{2}{4}
\sq{0}{3}
\sq{1}{3}

\end{scope}
\end{tikzpicture}
\end{center}
where omitting the doubletons gives the Ferrers diagram for the integer partition  $ (4,3,2)$. Specifying the comparisons with singleton $\{5\}$ is equivalent to placing a dot in each row of $ (4,3,2)$.
%for placing the singleton $5$ in the rows of doubletons.
Looking only at the top row, placing a 5 in the first box
\begin{center}
\begin{tikzpicture}[scale=.3]

\begin{scope}

\sq{0}{5}
\sq{1}{5}
\sq{2}{5}
\sq{3}{5}
\sq{0}{4}
\sq{1}{4}
\sq{2}{4}
\sq{0}{3}
\sq{1}{3}

\draw[very thick] (0,5) -- (3,5) -- (3,4) -- (2,4) -- (2,3) -- (0,3) -- cycle;

\node at (0.5, 5.5) {$\bullet$};

\end{scope}
\end{tikzpicture}

\end{center}
means that $5 \prec 41$. This  puts no further de Finetti  restrictions on the remaining two rows. 
The four ways to complete this configuration are

\begin{center}
\begin{tikzpicture}[scale=.3]

\begin{scope}

\sq{0}{5}
\sq{1}{5}
\sq{2}{5}
\sq{3}{5}
\sq{0}{4}
\sq{1}{4}
\sq{2}{4}
\sq{0}{3}
\sq{1}{3}

\node at (0.5, 5.5) {$\bullet$};
\node at (0.5, 4.5) {$\bullet$};
\node at (0.5, 3.5) {$\bullet$};

\end{scope}

%%%%%%%%%%%%%%%%%%%%%
\begin{scope}[shift={(7,0)}]

\sq{0}{5}
\sq{1}{5}
\sq{2}{5}
\sq{3}{5}
\sq{0}{4}
\sq{1}{4}
\sq{2}{4}
\sq{0}{3}
\sq{1}{3}

\node at (0.5, 5.5) {$\bullet$};
\node at (0.5, 4.5) {$\bullet$};
\node at (1.5, 3.5) {$\bullet$};

\end{scope}

%%%%%%%%%%%%%%%%%%%%%
\begin{scope}[shift={(14,0)}]

\sq{0}{5}
\sq{1}{5}
\sq{2}{5}
\sq{3}{5}
\sq{0}{4}
\sq{1}{4}
\sq{2}{4}
\sq{0}{3}
\sq{1}{3}

\node at (0.5, 5.5) {$\bullet$};
\node at (1.5, 4.5) {$\bullet$};
\node at (1.5, 3.5) {$\bullet$};

\end{scope}

%%%%%%%%%%%%%%%%%%%%%
\begin{scope}[shift={(21,0)}]

\sq{0}{5}
\sq{1}{5}
\sq{2}{5}
\sq{3}{5}
\sq{0}{4}
\sq{1}{4}
\sq{2}{4}
\sq{0}{3}
\sq{1}{3}

\node at (0.5, 5.5) {$\bullet$};
\node at (2.5, 4.5) {$\bullet$};
\node at (1.5, 3.5) {$\bullet$};

\end{scope}

\end{tikzpicture}
\end{center}
which correspond to the comparisons 
$$
5 \prec 32 \qquad 21 \prec 5 \prec 31 \qquad 31 \prec 5 \prec 32 \qquad 32 \prec 5 \prec 41.
$$

Our final step is to count the boxes to the right of these dots, starting from the bottom row and moving up. This results in the lists $(1,2,3)$, $(0,2,3)$,  $(0,1,3)$ and $ (0,0,3)$
from $\cL([3])$.

Next, we consider extensions in $\Phi(I_7)$. Specifying the comparisons of singleton $\{ 7 \}$ with the doubletons in $I_7$ is equivalent to placing a  dot in each row of the integer partition $(6,5,4,3,2)$. Suppose that we place a $7$ in the third box of the first row, corresponding to  $62 \prec 7 \prec 63$.  Now de Finetti's condition
leads to $21 \prec 32 \prec 42 \prec 52  \prec 62 \prec 7$, which yields  the partially filled diagram
\begin{center}
\begin{tikzpicture}[scale=.3]

\begin{scope}

\sq{0}{7}
\sq{1}{7}
\sq{2}{7}
\sq{3}{7}
\sq{4}{7}
\sq{5}{7}
\sq{0}{6}
\sq{1}{6}
\sq{2}{6}
\sq{3}{6}
\sq{4}{6}
\sq{0}{5}
\sq{1}{5}
\sq{2}{5}
\sq{3}{5}
\sq{0}{4}
\sq{1}{4}
\sq{2}{4}
\sq{0}{3}
\sq{1}{3}

\node at (2.5, 7.5) {$\bullet$};
\node at (2.5, 4.5) {$\bullet$};
\node at (1.5, 3.5) {$\bullet$};

\draw[very thick] (2,7) -- (5,7) -- (5,6) -- (4,6) -- (4,5) -- (2,5) -- cycle;

%\draw[very thick] (1,5) -- (3,5) -- (3,4) -- (2,4) -- (2,3) -- (1,3) -- cycle;

\end{scope}
\end{tikzpicture}

\end{center}
which contains a shifted copy of partition $(3,2)$ whose rows must each be assigned a dot. 
This can be done in four ways, and counting the boxes to the right of the dots gives the lists $(0,0,1,2,3)$, $(0,0,0,2,3)$,  $(0,0,0,1,3)$ and $(0,0,0,0,3)$
from $\cL([5])$.
We now prove Lemma \ref{lem:template} by strong induction.

\begin{figure}[ht!]
\begin{center}

\begin{tikzpicture}[scale=.33]

%%%%%%%%%
\begin{scope}[shift={(-15,0)}]

\foreach \i in {0,1,...,8} {
	\draw[thick] (\i, \i) -- (\i+1, \i) -- (\i+1, \i+1);
}

\draw[thick] (0,0) -- (-1,0) -- (-1,9) -- (9,9);

\node at (2,5) {\small $\lambda_{n-1}$};

\node at (4.5,-1) {\small (a)};

\end{scope}

%%%%%%%%%%
%\begin{scope}
%
%\fill[gray!25] (-1,8) -- (9,8) -- (9,9) -- (-1,9);
%
%\foreach \i in {0,1,...,8} {
%	\draw[thick] (\i, \i) -- (\i+1, \i) -- (\i+1, \i+1);
%}
%
%\draw[thick] (0,0) -- (-1,0) -- (-1,9) -- (9,9);
%\draw (-1,8) -- (8,8);
%\draw (0,8) -- (0,9);
%
%
%
%%\draw[dashed] (0,0) -- (0,8);
%
%\draw[fill] (-.5,8.5) circle (6pt);
%
%\node at (2,5) {\small $\lambda_{n-2}$};
%
%\node at (4.5,-1) {\small (b)};
%
%\end{scope}

%%%%%%%%%
\begin{scope}[shift={(0,0)}]

\fill[gray!25] (2,8) -- (3,8) -- (3,9) -- (2,9);
\fill[gray!25] (3,8) -- (9,8) -- (9,9) -- (3,9);

\fill[gray!25] (-1,0) -- (-1,9) -- (2,9) -- (2,2) -- (2,1) -- (1,1) -- (1,0) -- cycle;

\fill[gray!25] (2,2) -- (2,3) -- (3,3) -- (3,2) -- cycle;

\foreach \i in {0,1,...,8} {
	\draw[thick] (\i, \i) -- (\i+1, \i) -- (\i+1, \i+1);
}

\draw[thick] (0,0) -- (-1,0) -- (-1,9) -- (9,9);
\draw (-1,8) -- (8,8);

%\draw[dashed] (3,2) -- (3,8);

\draw (2,3) -- (3,3);

\draw (3,8) -- (3,9);
\draw (2,3) -- (2,9);

\draw[fill] (2.5,8.5) circle (6pt);
\draw[fill] (2.5,2.5) circle (6pt);
\draw[fill] (1.5,1.5) circle (6pt);
\draw[fill] (.5,.5) circle (6pt);

\node at (6,8,5) {\small $\lambda_{n-k-1}$};

\node at (4.5,-1) {\small (c)};

\end{scope}

%%%%%%%%%
\begin{scope}[shift={(15,0)}]

\fill[gray!25] (2,8) -- (3,8) -- (3,9) -- (2,9);

\fill[gray!25] (-1,0) -- (-1,9) -- (3,9) --(3,8) -- (4,8) -- (4,7) -- (5,7) -- (5,4) --
    (4,4) -- (4,3) -- (3,3) -- (2,3) -- (2,2) -- (2,1) -- (1,1) -- (1,0) -- cycle;

\fill[gray!25] (2,2) -- (2,3) -- (3,3) -- (3,2) -- cycle;

\foreach \i in {0,1,...,8} {
	\draw[thick] (\i, \i) -- (\i+1, \i) -- (\i+1, \i+1);
}

\draw[thick] (0,0) -- (-1,0) -- (-1,9) -- (9,9);
\draw (3,8) -- (8,8);
\draw (5,7) -- (7,7);
\draw (5,6) -- (6,6);

%\draw[dashed] (3,2) -- (3,8);

\draw (3,8) -- (3,9);
\draw (-1,9) -- (3,9) --(3,8) -- (4,8) -- (4,7) -- (5,7) -- (5,4);

\draw (4,9) -- (4,8);
\draw (5,9) -- (5,7);
\draw (6,9) -- (6,6);
\draw (7,9) -- (7,7);
\draw (8,9) -- (8,8);

\draw[fill] (2.5,8.5) circle (6pt);
\draw[fill] (3.5,7.5) circle (6pt);
\draw[fill] (4.5,6.5) circle (6pt);
\draw[fill] (4.5,5.5) circle (6pt);
\draw[fill] (4.5,4.5) circle (6pt);
\draw[fill] (3.5,3.5) circle (6pt);
\draw[fill] (2.5,2.5) circle (6pt);
\draw[fill] (1.5,1.5) circle (6pt);
\draw[fill] (.5,.5) circle (6pt);

\node at (4.5,-1) {\small (c)};

\end{scope}

\end{tikzpicture}

\end{center}
\caption{(a) The Ferrers diagram $\lambda_{n-1}=(n-1,n-2, \ldots, 2)$ for ordering singleton $\{n\}$ with the doubletons.
%(b) When  $\{ n \} \prec \{n-1, 1\}$, there are no restrictions of how we place  $\{n\}$ amongst the remaining doubletons, so we use template $\lambda_{n-2}$ to complete the ordering. 
(b) When $\{ n-1, k-1\} \prec n \prec \{n-1, k \}$, we place a dot in the $k$th position. This places de Finetti restrictions on the remaining rows. Completing the order is equivalent to choosing an order inside tempate $\lambda_{n-k-1}$.
(c) Counting the boxes to the right of the dots gives the subset $ \{ 6, 4, 2,1 \} \subset [n-2]$.
}

\label{fig:template}

\end{figure}
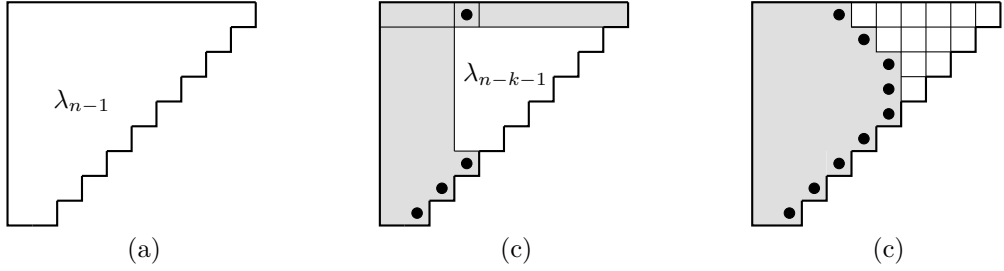

\begin{proof}[Proof of Lemma \ref{lem:template}.]
We recursively define the bijection $f : \Phi(I_n) \rightarrow \cL([n-2])$. 
%For $n=2$, we map the single ordering in $\cR_2$ to $\emptyset.$  
For $n=3$, we map the ordering with $21 \prec 3$ to the list $( 0 )$ and the ordering with $3 \prec 21$ to the list $(1)$. 
Assume that we have specified the bijection  $f : \Phi(I_\ell) \rightarrow \cL([\ell-2])$ for $2 \leq \ell < n$. 
We determine the image of an extension  $E \in \Phi(I_n)$. 
As in the above examples, we represent $E$ by placing a dot in each row of the Ferrers diagram $\lambda_{n-1} = (n-1, n-2, \ldots, 2),$ see Figure \ref{fig:template}(a).  Our target list in $L \in \cL( [n-2])$ will be obtained by counting the boxes to the right of the dot in each row.
%Our strong inductive proof proceeds according to where the singleton $n$ appears in the chain $$
%\{ n-1, 1\} \prec \{ n-1, 2\} \prec \cdots \prec \{ n-1, n-3\} \prec  \{ n-1, n-2\}.
%$$

Placing a dot in position $1 \leq k \leq n-1$ of the first row of the template 
$$ 
\Box \quad \{ n-1, 1\} \quad \Box \quad  \{ n-1, 2\} \quad \Box \quad \cdots \quad \Box \quad \{ n-1, n-3\} \quad \Box \quad  \{ n-1, n-2\} \quad \Box
$$
resolves the ordering of singleton $\{n \}$ with the doubletons  $\{ n-1, j\}$ for $1 \leq j \leq n-2$.
If $k=1$ then $\{n\} \prec \{ n-1, 1 \}$; if $1 < k < n-1$ then $ \{ n-1, k-1 \} \prec \{ n \} \prec \{n-1, k \}$; and if $k=n-1$ then $\{ n-1, n-2 \} \prec \{ n \}.$
De Finetti's condition (F2) puts constraints on the remaining rows. For rows $2 \leq i \leq n-k-1$, we must place $n$ in position $k$ or higher. For rows $i > n-k-1$, we must place $n$ in the rightmost (diagonal) position. Therefore, we can restrict our attention to rows 
$2 \leq i \leq n-k-1$ and positions $k \leq j \leq n-2$. But this is simply a translation of the mapping $f: \Phi(I_{n-k}) \rightarrow \cL([n-k-2])$ via a copy of $\lambda_{n-k-1}$, see Figure \ref{fig:template}(b). 
Let $(a_1, a_2, \ldots, a_{n-k-2})  \in \cL( [n-k-2])$ be the image of this mapping. We set 
$$
f(E) = 
(\underbrace{0, \ldots, 0}_{k}, a_1, a_2, \ldots, a_{n-k-2}, n-k-1).
$$
The values in this list are the number of boxes to the right of the dots in Figure \ref{fig:template}(c), when ordered from bottom to top.
\end{proof}

We can now prove that the set of de Finetti extensions $\finetti{n}{2}{1}$ is in bijection with the set of kagog triangles $\cK_{n-1}$.

\begin{proof}[Proof of Lemma \ref{lemma:finetti-kagog}.]
Let $E \in \finetti{n}{2}{1}$ be a de Finetti extension of $F_{n,2}$ so that every singleton is universally comparable in $E$. Consider $(E_3, E_4, \ldots, E_n)$ where $E_k \in \Phi(I_k)$ is the poset extension of $I_k \cup \{ k \}$ induced by $E$.  
Create a triangular array $T = T(i,j) $ for $1 \leq j \leq i \leq n-2$ by applying the mapping $f$ from Lemma \ref{lem:template} to each element in this list of extensions, using the indexing convention 
$$f(E_k) = \big( T(k-2,1),  T(k-2,2), \cdots , T(k-2,k-2) \big), 
\quad 3 \leq k \leq n.$$  
By Lemma \ref{lem:template}, each row satisfies the kagog row constraint.
Meanwhile, the extension $E$ satisfies de Finetti's condition (F2). In particular, for
any $1 \leq i < j < k \leq n$,  if $ \{ k \}  \prec \{ j,i \} $ then  $\{k-1\} \prec \{j,i\}$. 
In terms of triangle $T$, this means that $T(k-2,n-j) \geq T(k-3,n-j)$. 
For $2 \leq j \leq n-1$, this is precisely the contraint that column $n-k$ of a kagog triangle must be weakly decreasing constraint on column. Column $n-1$ has a single entry, so the final column is (vacuously) weakly increasing.
\end{proof}

%%%%%%%%%%%%%%%%%%%%%%%%%%%%%%%%%%%%%%%%%%%%%%%%

\subsection{The bijection from $\magog{n}$ to $\kagog{n}$}

\label{sec:magog-kagog}

We now prove Lemma \ref{lemma:magog-kagog}. 
%Along with Lemma \ref{lemma:finetti-kagog}, this completes the proof of Theorem \ref{thm:finetti-magog}. 
Recall that each triangular family $\mathcal{T}_n$ forms a distributive lattice and that Definition \ref{def:pyramid} constructs two-color pyramids in relation to the maximum and minimum triangle of $\mathcal{T}_n$. 

The minimum magog triangle has  $\mini{M}(i,j)=1$ for every entry $(i,j)$ and the maximum magog triangle has $\maxi{M}(i,j)=j$ for every entry $(i,j)$. Our first transformation is to subtract $\mini{M}$ from each magog triangle. The rightmost column becomes all-zero, so we omit it and reindex. This leads to the family of omagog triangles (short for ``zeroed-magog'' triangles).

\begin{definition} 
\label{def:omagog}
An omagog triangle $\oo{M}$ of index $n$ is an array of nonnegative integers $\oo{M}(i,j)$ such that
\begin{itemize}
    \item[(OM1)] $1\leq j\leq i \leq n-1$,  so the array is triangular
    \item[(OM2)] $\oo{M}(i,j)\leq j$, so the entries in column $k$ are at most j;
    \item[(OM3)] $\oo{M}(i,j)\leq \oo{M}(i+1,j)$, so columns are weakly increasing; and
    \item[(OM4)] $\oo{M}(i,j)\leq \oo{M}(i,j+1)$, so rows are weakly increasing.
\end{itemize}
We use $\omagog{n}$ to denote the set of all omagog triangles  of index $n$.
\end{definition}
The set $\omagog{3}$ appears in Figure \ref{o3}, with elements ordered so that they biject to the magog triangles of equation \eqref{eqn:magog3}.
The minimum omagog triangle  satisfies $\mini{\oo{M}}(i,j)=0$ and the maximum omagog triangle  satisfies  $\maxi{\oo{M}}(i,j)=j$ for all entries $(i,j)$. 

%Our bijective proof is expidited by  transforming the triangular arrays into two-colored pyramids, as described in Definition \ref{def:pyramid}. 
%Figure~\ref{o3} shows the omagog two-colored pyramid corresponding to each omagog triangle in $\omagog{3}$.
%, listed in the same order as the corresponding magog pyramids in Figure \ref{fig:magog-pyr}  

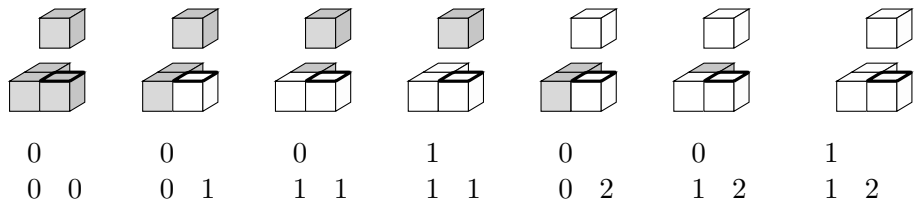
\begin{figure}[ht!]
\centering
\begin{tabular}{ccccccc}

\begin{tikzpicture}[scale=0.4]
\twocolor{2/0}{1/0}
\draw[rounded corners = 0.3mm, very thick, black] (1 cm,1)--(0cm,1cm)--(.5cm,1.3cm)--(1.5cm,1.3cm)--(1,1);
\begin{scope}[shift={(1,2.1)}]
\twocolor{1/0}{}
\end{scope}
\node (l) at (0.5,-2) {$\begin{array}{cc}
 0 \\[-1 mm]0 & 0
\end{array}$};
\end{tikzpicture}

&

\begin{tikzpicture}[scale=0.4]
\twocolor{1/1}{1/0}
\draw[rounded corners = 0.3mm, very thick, black] (1 cm,1)--(0cm,1cm)--(.5cm,1.3cm)--(1.5cm,1.3cm)--(1,1);
\begin{scope}[shift={(1,2.1)}]
\twocolor{1/0}{}
\end{scope}
\node (l) at (0.5,-2) {$\begin{array}{cc}
 0 \\[-1 mm] 0 & 1
\end{array}$};
\end{tikzpicture}

&

\begin{tikzpicture}[scale=0.4]
\twocolor{0/2}{1/0}
\draw[rounded corners = 0.3mm, very thick, black] (1 cm,1)--(0cm,1cm)--(.5cm,1.3cm)--(1.5cm,1.3cm)--(1,1);
\begin{scope}[shift={(1,2.1)}]
\twocolor{1/0}{}
\end{scope}
\node (l) at (0.5,-2) {$\begin{array}{cc}
 0 \\[-1 mm]1 & 1
\end{array}$};
\end{tikzpicture}

&

\begin{tikzpicture}[scale=0.4]
\twocolor{0/2}{0/1}
\draw[rounded corners = 0.3mm, very thick, black] (1 cm,1)--(0cm,1cm)--(.5cm,1.3cm)--(1.5cm,1.3cm)--(1,1);
\begin{scope}[shift={(1,2.1)}]
\twocolor{1/0}{}
\end{scope}
\node (l) at (0.5,-2) {$\begin{array}{cc}
 1 \\[-1 mm]1 & 1
\end{array}$};
\end{tikzpicture}

&

\begin{tikzpicture}[scale=0.4]
\twocolor{1/1}{1/0}
\draw[rounded corners = 0.3mm, very thick, black] (1 cm,1)--(0cm,1cm)--(.5cm,1.3cm)--(1.5cm,1.3cm)--(1,1);
\begin{scope}[shift={(1,2.1)}]
\twocolor{0/1}{}
\end{scope}
\node (l) at (0.5,-2) {$\begin{array}{cc}
0 \\[-1 mm]0 & 2
\end{array}$};
\end{tikzpicture}

&

\begin{tikzpicture}[scale=0.4]
\twocolor{0/2}{1/0}
\draw[rounded corners = 0.3mm, very thick, black] (1 cm,1)--(0cm,1cm)--(.5cm,1.3cm)--(1.5cm,1.3cm)--(1,1);
\begin{scope}[shift={(1,2.1)}]
\twocolor{0/1}{}
\end{scope}
\node (l) at (0.5,-2) {$\begin{array}{cc}
 0 \\[-1 mm]1& 2
\end{array}$};
\end{tikzpicture}

&

\begin{tikzpicture}[scale=0.4]
\blocklayer{2}{1/0}

\begin{scope}[shift={(1,0)}]
\draw[rounded corners = 0.3mm, very thick, black] (1 cm,1)--(0cm,1cm)--(.5cm,1.3cm)--(1.5cm,1.3cm)--(1,1);
\end{scope}
\begin{scope}[shift={(1,2.1)}]
\blocklayer{1}{}
\end{scope}
\node (l) at (0.5,-2) {$\begin{array}{cc}
1 \\[-1 mm]1 & 2
\end{array}$};
\end{tikzpicture}

%&

%\begin{tikzpicture}[scale=0.4]
%\node (o) at (0,0) {\phantom{o}};
%\begin{scope}[shift={(0,4)}]
%\node (l1) at (-1,4) {$\leftarrow$ Layer 1};
%\node (l2) at (-1,2.2) {$\leftarrow$ Layer 2};
%\node (l3) at (-1,0) {$\leftarrow$ Layer 3};
%\end{scope}
%\end{tikzpicture}

\end{tabular}
\caption{The omagog triangles $\omagog{3}$ and the corresponding two-color omagog pyramids $\pyromagog{3}$. The pyramids are sliced into horizontal layers. The shadow of a layer is thickly drawn on the layer below.}
\label{o3}
\end{figure}

\begin{proof}[Proof of Lemma \ref{lemma:magog-kagog}.]
We create a bijection $\psi$ from omagog pyramids $\pyromagog{n}$ to kagog pyramids $\pyrkagog{n}$ via a sequence of elementary transformations.

Recall that a two-color pyramid $\pyr{T}$ is a collection of cubes $(i,j,k)$ that are colored white or gray. Renaming these colors as color 1 and color 0, respectively, then the two-color pyramid becomes a binary function on the set of admissible coordinates, that is  $\pyro{M} : (i,j,k) \mapsto \{0,1\}.$
Viewing $\pyro{M}$ as a function allows us to describe the collection $\pyromagog{n}$ of two-color pyramids with a system of inequalities.
We have 
\begin{itemize}
\item $\pyro{M}(i,j,k)$ is defined for $1\leq k\leq j \leq i \leq n-1$.
\item $\pyro{M}(i,j,k)\leq \pyro{M}(i+1,j,k)$: the columns of the magog triangle are nondecreasing,
\item $\pyro{M}(i,j,k) \leq \pyro{M}(i,j+1,k)$: the rows of the magog triangle are nondecreasing, and
\item $\pyro{M}(i,j,k+1) \leq \pyro{M}(i,j,k)$: color 1 (white, present) cubes are below color 0 (gray, absent) cubes, so the cubes that are present obey ``gravity.''
\end{itemize}

We now perform our four step transformation $\psi$. 
\begin{itemize} 
\item \textbf{Step 1:} Invert the colors, or exchange color 0 for color 1 and vice versa. This reverses the inequalities.
\item \textbf{Step 2:} Push all cubes north in their respective column so that row 1 has length $n-1$. This is equivalent to moving the cube $(i,j,k)$ to $(i-(j-1),j,k)$. 
\item \textbf{Step 3:} Tip the entire stack over the $y$-axis via a clockwise rotation by $\pi/2$. This is equivalent to moving the cube $(i,j,k)$ to $(n-k,j,i)$.
\item \textbf{Step 4:} Reflect the stack through the plane $y=(n+1)/2$. This is equivalent to moving the cube $(i,j,k)$ to $(i,n-j,k)$. 
\end{itemize}
After composing these four steps, cube $(i,j,k)$ switches color and moves to $(n-k,n-j,i-j+1)$.
 Figure \ref{fig:M2K} shows the mapping $\psi$ for an omagog pyramid in $\pyromagog{4}$.
\begin{figure}[ht!]
    \centering
\begin{tikzpicture}[scale=0.45]

%first stack
\begin{scope}[shift={(-18,0)}]
\twocolor{0/3}{1/1,1/0}
\draw[rounded corners = 0.3mm, very thick, black] (0 cm, 1cm) -- (2 cm, 1 cm) -- (2.5 cm,1.3 cm)--(1.5 cm, 1.3 cm) -- (2 cm,1.6 cm)--(1 cm, 1.6cm)--(0 cm, 1 cm);
\begin{scope}[shift={(1,2.25)}]
\twocolor{0/2}{1/0}
\draw[rounded corners = 0.3mm, very thick, black] (1 cm,1)--(0cm,1cm)--(.5cm,1.3cm)--(1.5cm,1.3cm)--(1,1);
\end{scope}
\begin{scope}[shift={(2,4.25)}]
\twocolor{1/0}{}

%\draw[blue] (0,0) -- (30,0);
%\draw[blue] (0,-2) -- (30,-2);
%\draw[blue] (0,-4.25) -- (30,-4.25);

\end{scope}

\node at (.75,-4) {$
\begin{array}{ccc}
    0   \\
    0 & 1 \\ 
    1 & 2 & 2
\end{array}
$};

\end{scope}

%first label
\node (l1) at (-17,-.3) {};
\node (r1) at (-12,-.3) {};
\draw [->] (l1) to [bend right=30]  node [below, sloped]  (t1) {\small (1)} (r1);

%second stack
\begin{scope}[shift={(-11.8,0)}]
\reversetwocolor{0/3}{1/1,1/0}
\draw[rounded corners = 0.3mm, very thick, black] (0 cm, 1cm) -- (2 cm, 1 cm) -- (2.5 cm,1.3 cm)--(1.5 cm, 1.3 cm) -- (2 cm,1.6 cm)--(1 cm, 1.6cm)--(0 cm, 1 cm);
\begin{scope}[shift={(1,2.25)}]
\reversetwocolor{0/2}{1/0}
\draw[rounded corners = 0.3mm, very thick, black] (1 cm,1)--(0cm,1cm)--(.5cm,1.3cm)--(1.5cm,1.3cm)--(1,1);
\end{scope}
\begin{scope}[shift={(2,4.25)}]
\reversetwocolor{1/0}{}
\end{scope}

\node at (.75,-4) {$
\begin{array}{ccc}
    1   \\
    1 & 1 \\ 
    0 & 0 & 1
\end{array}
$};

\end{scope}

%second label
\node (l2) at (-10.8,-.3) {};
\node (r2) at (-5.6,-.3) {};
\draw [->] (l2) to [bend right=30]  node [below, sloped]  (t2) {\small (2)} (r2);

%third stack
\begin{scope}[shift={(-4.6,.65)}]
\cube{0}{0} \gcube{1}{0} \gcube{2}{0} 
\cube{-.5}{-.3} \gcube{.5}{-.3}
\gcube{-1}{-.6}
\draw[rounded corners = 0.3mm, very thick, black] (-.5,0.7 cm) -- (.5,0.7cm)--(1,1 cm)--(2,1cm)--(2.5,1.3 cm)  -- (0.5,1.3 cm)--(-.5, 0.7 cm);
\begin{scope}[shift={(1,1.9)}]
\cube{0}{0} \gcube{1}{0}
\gcube{-.5}{-.3}
\draw[rounded corners = 0.3mm, very thick, black] (1,1)--(0cm,1cm)--(0.5cm,1.3cm)--(1.5cm,1.3cm)--(1,1);
\end{scope}
\begin{scope}[shift={(2,3.6)}]
\cube{0}{0}
\end{scope}

\node at (.5,-5) {$
\begin{array}{ccc}
    1 & 1 & 1 \\
    1 & 0 \\ 
    0 & 
\end{array}
$};

\end{scope}

%third label
\node (l3) at (-3.6,-.3) {};
\node (r3) at (1.5,-.3) {};
\draw [->] (l3) to [bend right=30]  node [below, sloped]  (t3) {\small (3)} (r3);

%fourth stack
\begin{scope}[shift={(4.5,.6)}]
\cube{0}{0}
\cube{-1.5}{-.3} \gcube{-.5}{-.3}
\cube{-3}{-.6} \gcube{-2}{-.6} \gcube{-1}{-.6}
\draw[rounded corners = 0.3mm, very thick, black] (-4,0.4 cm)--(-2, 0.4 cm)--(-1,1 cm)--(-2,1cm)--(-2.5,0.7cm)--(-3.5,0.7 cm) -- (-4,0.4 cm);
\begin{scope}[shift={(-1.5,2)}]
\gcube{0}{0} 
\cube{-1.5}{-.3} \gcube{-.5}{-.3}
\draw[rounded corners = 0.3mm, very thick, black] (-1.5,0.7)--(-2.5cm,0.7cm)--(-2cm,1cm)--(-1cm,1cm)--(-1.5,0.7cm);
\end{scope}
\begin{scope}[shift={(-3,3.6)}]
\gcube{0}{0}
\end{scope}

\node at (-2.25,-5) {$
\begin{array}{ccc}
     &  & 1 \\
    & 1 & 0 \\ 
    2& 0&0  
\end{array}
$};
\end{scope}

%fourth label
\node (l4) at (3,-.3) {};
\node (r4) at (8,-.3) {};
\draw [->] (l4) to [bend right=30]  node [below, sloped]  (t4) {\small (4)} (r4);

%fifth stack
\begin{scope}[shift={(8.2,0)}]
\twocolor{2/1}{1/1,0/1}
\draw[rounded corners = 0.3mm, very thick, black] (0 cm, 1cm) -- (2 cm, 1 cm) -- (2.5 cm,1.3 cm)--(1.5 cm, 1.3 cm) -- (2 cm,1.6 cm)--(1 cm, 1.6cm)--(0 cm, 1 cm);
\begin{scope}[shift={(1,2.2)}]
\twocolor{1/1}{1/0}
\draw[rounded corners = 0.3mm, very thick, black] (1 cm,1)--(0cm,1cm)--(.5cm,1.3cm)--(1.5cm,1.3cm)--(1,1);
\end{scope}
\begin{scope}[shift={(2,4.2)}]
\twocolor{1/0}{}
\end{scope}

\node at (.75,-4) {$
\begin{array}{ccc}
    1  \\
    0 & 1 \\ 
    0 & 0 & 2 
\end{array}
$};
\end{scope}

\end{tikzpicture}

    \caption{Example of the omagog to kagog bijection $\psi$. (1) Invert the colors. (2) push the cubes northward along the columns. (3) Tip the stack around the $y$-axis. (4) Reflect through to the plane $y=(n+1)/2$. }
    \label{fig:M2K}
\end{figure}
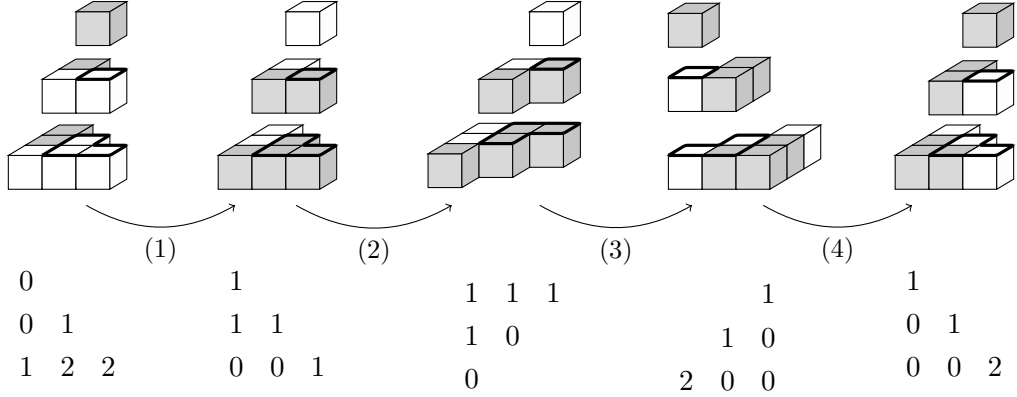

%%%%%%%%%%%%%%%%%%%%%%%%%%%%%%%%%%%%%%%%%

Updating the omagog pyramid inequalities   at every step leads to the following algebraic constraints for some pyramid $\pyr{P}$:
\begin{itemize}
\item[(P1)] $\pyr{P}(i,j,k)$ is defined for $1\leq k \leq j \leq i \leq n-1$. %easy
\item[(P2)] $\pyr{P}(i,j,k) \geq \pyr{P}(i+1,j,k) $,  
\item[(P3)] $\pyr{P}(i,j,k) \geq \pyr{P}(i,j-1,k-1)$, and 
\item[(P4)] $\pyr{P}(i,j,k) \geq \pyr{P}(i,j,k+1)$. 
\end{itemize}
These pyramid inequalities correspond to the kagog triangle constraints of Definition \ref{def:kagog}, where we must recall that color 1 (white) cubes are present and color 0 (gray) cubes are absent. Condition (P1) ensures that the domain for admissible cubes $(i,j,k)$ is correct and that the height of tower $(i,j)$ is at most $j$, so (K1) and (K2) hold. Condition (P4) states that the cubes adhere to gravity: color 1 blocks must appear below color 0 blocks.
Conditions (P2) and (P4) ensure that the columns are weakly decreasing, so (K3) holds. Conditions (P3) and (P4) ensure that the rows are strictly increasing after the first nonzero entry, so (K4) holds. Indeed, if cube $(i,j-1,k-1)$ is color 1, then $(i,j,k)$ is color 1, so the tower at $(i,j)$ must be taller than the tower at $(i,j-1)$. 
\end{proof}

%%%%%%%%%%%%%%%%%%%%%%%%%%%%%%%%%%%%%%%%%%%
%%%%%%%%%%%%%%%%%%%%%%%%%%%%%%%%%%%%%%%%%%%
%%%%%%%%%%%%%%%%%%%%%%%%%%%%%%%%%%%%%%%%%%%

\subsection{A Catalan Submapping}

In this brief digression, we show that the mapping $\psi:\pyromagog{n} \rightarrow \pyrkagog{n}$
induces a natural bijection  between Catalan subfamilies of  these pyramids. We start by describing two known Catalan families \cite{stanley-catalan}.  Let $\cS_n$ denote the set of nondecreasing sequences $(s_0,s_1, \ldots, s_{n-1})$ where $0 \leq s_i \leq i$ for $0 \leq i \leq n-1$ and $s_i \leq s_{i+1}$ for $0\leq i \leq n-2$. Let $\cC_n$ denote the set of coin pyramids whose bottom row contains $n$ consecutive coins.

Next, we define our associated pyramid families.
Let $\seq{n}' \subset \omagog{n}$ be the set of omagog triangles whose first $n-2$ rows are all zero. 
Let $\coin{n}' \subset \kagog{n}$ be the set of   kagog triangles such that every entry in column $j$ is either $j-1$ or $j$.

\begin{prop}
\label{prop:catalan}
Let $\seq{n}, \coin{n}, \seq{n}'$ and $\coin{n}'$ be the families defined above.
\begin{enumerate}
\item[(a)] There is an elementary bijection $\sigma: \seq{n} \rightarrow \coin{n}.$
\item[(b)] There is an elementary bijection  $\rho: \seq{n} \rightarrow \seq{n}'.$
\item[(c)] There is an elementary bijection  $\tau: \coin{n} \rightarrow \coin{n}'.$
\item[(d)] Restricting the bijection $\psi: \pyromagog{n} \rightarrow \pyrkagog{n}$ from Lemma \ref{lemma:magog-kagog}  to $\pyrseq{n}$  gives a bijection to $\pyrcoin{n}$. Furthermore, this bijection has a natural interpretation in terms of monotone sequences and coin pyramids. Namely,  $\sigma =  \tau^{-1} \circ \psi \circ  \rho$.
\end{enumerate}
\end{prop}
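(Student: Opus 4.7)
My plan is to prove the four parts in order. Parts (a)--(c) describe elementary bijections, and (d) performs the main geometric calculation.

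For (b), the map $\rho$ is essentially tautological: given $(s_0, s_1, \ldots, s_{n-1}) \in \seq{n}$ with $s_0 = 0$ forced, set $\rho(s)(n-1, j) := s_j$ and $\rho(s)(i, j) := 0$ for $i < n-1$. The conditions defining $\seq{n}$ match (OM2) and (OM4), and (OM3) is automatic because all rows above the bottom are zero. For (c), each kagog $K \in \coin{n}'$ is determined by the tuple $(a_1, \ldots, a_{n-1})$ where $a_j = |\{i : K(i,j) = j\}|$; the remaining column-$j$ entries are all $j-1$. Conditions (K3) and (K4) reduce to $0 \leq a_j \leq n-j$ and $a_{j+1} \geq a_j - 1$, which is a standard parameterization of coin pyramids with $n$ bottom coins; $\tau$ sends the coin pyramid with this data to the corresponding kagog. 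For (a), I take $\sigma$ to be any standard Catalan bijection $\seq{n} \to \coin{n}$; alternatively, I \emph{define} $\sigma := \tau^{-1} \circ \psi \circ \rho$ and deduce (a) from (b), (c), and (d).

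For (d), the key observation is that an omagog pyramid in $\pyrseq{n}$ has every cube gray except possibly those in the bottom row $i = n-1$, where tower $(n-1, j)$ holds $s_j$ white cubes below $j - s_j$ gray ones. Applying the composition $\psi : (i, j, k) \mapsto (n-k,\, n-j,\, i-j+1)$ with color inversion, I compute:
\begin{itemize}
\item An image cube at $(i', j', k')$ with $k' < j'$ has preimage $(i, j, k)$ with $i = k' + n - j' - 1 < n-1$, so the preimage is gray and the image is white.
\item An image cube at the top of a tower, $(i', j', j')$, has preimage $(n-1,\, n-j',\, n-i')$, and becomes gray exactly when its preimage is white, i.e., when $n - i' \leq s_{n-j'}$.
\end{itemize}
Hence every image tower has $K(i', j') \in \{j'-1, j'\}$, placing the image in $\pyrcoin{n}$. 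Since both $\pyrseq{n}$ and $\pyrcoin{n}$ have cardinality $C_n$ and the restriction is injective (as a restriction of $\psi$), it is a bijection. Reading off $\tau^{-1}$ yields the explicit coin-pyramid heights $a_j = (n-j) - s_{n-j}$ (no maximum is needed since $s_{n-j} \leq n-j$), which matches the classical Catalan bijection $\sigma$ up to the standard reversal-complement involution on Catalan sequences.

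The main obstacle is the indexing bookkeeping in (d). The composition $\psi$ sends the bottom row of the omagog to the tops of the kagog towers and reverses the column index from $j$ to $n-j$, so one must carefully check that (i) every non-top cube of the image is forced white and (ii) the formula $a_j = (n-j) - s_{n-j}$ correctly captures the induced bijection. Once the coordinate map is set up properly, these are direct calculations.
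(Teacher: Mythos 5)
Your part (d) is correct and is argued by a genuinely different route than the paper. You compute the composite $\psi:(i,j,k)\mapsto(n-k,n-j,i-j+1)$ directly on a pyramid of $\pyrseq{n}$ and observe that every non-top image cube ($k'<j'$) comes from a row $i<n-1$ and is therefore forced white, while the top cube of each tower inherits its color from the bottom row of the omagog; this yields the closed formula $a_j=(n-j)-s_{n-j}$ and, with the nondecreasing condition on $s$ translating exactly to $a_{j+1}\geq a_j-1$, gives surjectivity onto $\pyrcoin{n}$ without any counting. The paper instead only proves containment directly (via the gray cubes of row $n-2$ mapping to white cubes at height $n-j-1$) and then identifies the induced map with $\sigma$ by passing through the hybrid families $\hybrid{n}$ and $\hybrid{n}'$ of lattice paths and two-colored coins, reading the top layer of the kagog pyramid from a bird's-eye view. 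Your coordinate computation is more explicit and arguably tighter than the paper's on this step; what the paper's hybrid-configuration detour buys is a concrete, pinned-down definition of both $\sigma$ (sequence $\to$ lattice path $\to$ two-colored coins $\to$ reflect the white coins) and $\tau$ (coin pyramid $\to$ hybrid configuration $\to$ drop the diagonal coins, reflect, add $j-1$ to column $j$), which is exactly what makes the identity $\sigma=\tau^{-1}\circ\psi\circ\rho$ a substantive statement rather than a definition.

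That points to the one genuine soft spot: you never actually construct $\tau$. You correctly characterize $\coin{n}'$ by tuples $(a_1,\dots,a_{n-1})$ with $0\leq a_j\leq n-j$ and $a_{j+1}\geq a_j-1$, but the assertion that this is ``a standard parameterization of coin pyramids with $n$ bottom coins'' is not justified and is not the usual one --- coin pyramids are ordinarily bijected to Dyck paths by tracing the boundary, not by data of this shape, and a coin pyramid is not determined by its row lengths alone (the horizontal offsets matter). Without a concrete $\tau$, and with $\sigma$ either defined as the composite (making (a) and the displayed identity tautological) or identified with ``the classical bijection up to reversal-complement,'' the final claim $\sigma=\tau^{-1}\circ\psi\circ\rho$ is not actually verified for any specific pair $(\sigma,\tau)$. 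To close this you would need to fix an explicit map from coin pyramids to the tuples $(a_j)$ (e.g., via the hybrid configurations the paper uses, or via the boundary Dyck path) and check that your formula $a_j=(n-j)-s_{n-j}$ matches the chosen $\sigma$ on the nose rather than up to an involution.
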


%We  describe two known Catalan families: monotone sequences $\cS_n$ and coin pyramids $\cC_n$ and provide a simple bijection $\sigma: \cS_n \rightarrow \cC_n$.  Next, we represent each as a natural family of two-colored block pyramids $\pyrseq{n} \subset \pyromagog{n}$ and $\pyrcoin{n} \subset \pyrkagog{n}$. Finally, we show that the restriction  $\psi |_{\pyrseq{n}}$ is the natural mapping $\sigma$.

%Let $\cS_n$ denote the set of nondecreasing sequences $(s_0,s_1, \ldots, s_{n-1})$ where $0 \leq s_i \leq i$ for $0 \leq i \leq n-1$ and $s_i \leq s_{i+1}$ for $0\leq i \leq n-2$. Let $\cC_n$ denote the set of valid coin pyramids whose bottom row contains $n$ consecutive coins. 

\begin{proof}
Figure \ref{fig:seq-coin} shows the families  $\seq{3}$, $\coin{3}, \seq{3}'$, $\coin{3}'$.  It also shows two  families $\hybrid{3}, \hybrid{3}'$ of hybrid configurations  that are essential in multiple stages of the proof.   

\emph{Proof of (a).}   Our bijection
%uses the essential step in the bijection $\sigma: \seq{n} \rightarrow \coin{n}$, which  
 joins $\seq{n}$ with $\coin{n}$ via the set 
$\catpath{n}$ of  lattice paths from $(0,0)$ to $(n,n)$ that never travel above the diagonal $y=x$, composing mappings described in \cite{stanley-catalan}.
%All three of the families $\seq{n}, \coin{n}, \catpath{n}$ are well-known Catalan families.   We describe an elementary mapping $\sigma$ to compare with the mapping induced by $\psi$;  the components of this mapping can be found in \cite{stanley-catalan}.
%The mapping $\sigma$ proceeds as follows. 
First, we map sequence $s=(s_0, s_1, \ldots, 
s_{n-1}) \in \seq{n}$ to the lattice path $p \in \catpath{n}$ whose $k$th horizontal step is at height $s_k$. Next, we place gray (missing) coins in each square below $p$, and place white coins in each square above the path $p$, up to and including the squares along the diagonal $y=x$. Let $\hybrid{n}$ denote the hybrid family of configurations of paths and coins, where missing coins are gray.  To complete the mapping $\sigma$, reflect the white coins in the hybrid configuration through $\theta = \pi/8$ to obtain the corresponding coin pyramid.

\begin{figure}[ht]

\centering

\begin{tikzpicture}[scale=.75]

\node at (-1.5,4) {$\seq{3}'$};

\node at (-1.5,2) {$\seq{3}$};

\node at (-1.5,.4) {$\coin{3}$};

\node at (-1.5,-3) {$\hybrid{3}$};

\node at (-1.5,-5.5) {$\hybrid{3}'$};

\node at (-1.5,-8) {$\coin{3}'$};

%%%%
\begin{scope}

\node at (.75, 4) {$\begin{array}{cc} 0 \\ 0 & 0  \end{array}$};

\node at (.5, 2) {$(0,0,0)$};

\draw (0,0) circle (.25);
\draw (.5,0) circle (.25);
\draw (1,0) circle (.25);
\draw (.25,.45) circle (.25);
\draw (.75,.45) circle (.25);
\draw (.5,.9) circle (.25);

\begin{scope}[shift={(-.625,-4)}]
\foreach \x in {0,.75,1.5,2.25} {
   \draw[gray!50] (\x,0) -- (\x,2.25);
   \draw[gray!50] (0,\x) -- (2.25,\x);
}

\draw[very thick] (0,0) -- (.75,0);
\draw[very thick] (2.25,1.5) -- (2.25,2.25);

\draw[very thick] (.75,0) -- (2.25,0) -- (2.25,1.5);

\draw (.375,.375) circle (.25);
\draw (1.125,.375) circle (.25);
\draw (1.895,.375) circle (.25);
\draw (1.125,1.125) circle (.25);
\draw (1.895,1.125) circle (.25);
\draw (1.895,1.895) circle (.25);

\draw (1.125,-1.25) circle (.25);
\draw (1.125,-2) circle (.25);
\draw (1.895,-2) circle (.25);

\end{scope}

%\node at (1,-5.5) {$\begin{array}{cc} 1 \\ 1 & 1 \end{array}$};

\node at (1,-8) {$\begin{array}{cc} 1 \\ 1 &2 \end{array}$};

\end{scope}

%%%%%%%%
\begin{scope}[shift={(4,0)}]

\node at (.75, 4) {$\begin{array}{cc} 0 \\ 0 & 1  \end{array}$};

\node at (.5, 2) {$(0,0,1)$};

\draw (0,0) circle (.25);
\draw (.5,0) circle (.25);
\draw (1,0) circle (.25);
\draw (.25,.45) circle (.25);
\draw (.75,.45) circle (.25);

\begin{scope}[shift={(-.625,-4)}]
\foreach \x in {0,.75,1.5,2.25} {
   \draw[gray!50] (\x,0) -- (\x,2.25);
   \draw[gray!50] (0,\x) -- (2.25,\x);
}

\draw[very thick] (0,0) -- (.75,0);
\draw[very thick] (2.25,1.5) -- (2.25,2.25);

\draw[very thick] (.75,0) -- (1.5,0) -- (1.5, .75) -- (2.25,.75) -- (2.25,1.5);

\draw (.375,.375) circle (.25);
\draw (1.125,.375) circle (.25);
\draw[gray, fill=gray!50] (1.895,.375) circle (.25);
\draw (1.125,1.125) circle (.25);
\draw (1.895,1.125) circle (.25);
\draw (1.895,1.895) circle (.25);

\draw (1.125,-1.25) circle (.25);
\draw[gray, fill=gray!50]  (1.125,-2) circle (.25);
\draw (1.895,-2) circle (.25);

\end{scope}

%\node at (1,-5.5) {$\begin{array}{cc} 1 \\ 0 & 1 \end{array}$};

\node at (1,-8) {$\begin{array}{cc} 1 \\ 0 &2 \end{array}$};

\end{scope}

%%%%
\begin{scope}[shift={(8,0)}]

\node at (.75, 4) {$\begin{array}{cc} 0 \\ 1 & 1  \end{array}$};

\node at (.5, 2) {$(0,1,1)$};

\draw (0,0) circle (.25);
\draw (.5,0) circle (.25);
\draw (1,0) circle (.25);
\draw (.75,.45) circle (.25);

\begin{scope}[shift={(-.625,-4)}]
\foreach \x in {0,.75,1.5,2.25} {
   \draw[gray!50] (\x,0) -- (\x,2.25);
   \draw[gray!50] (0,\x) -- (2.25,\x);
}

\draw[very thick] (0,0) -- (.75,0);
\draw[very thick] (2.25,1.5) -- (2.25,2.25);

\draw[very thick] (.75,0) -- (.75,.75) -- (1.5, .75) -- (2.25,.75) -- (2.25,1.5);

\draw (.375,.375) circle (.25);
\draw[gray, fill=gray!50] (1.125,.375) circle (.25);
\draw[gray, fill=gray!50] (1.895,.375) circle (.25);
\draw (1.125,1.125) circle (.25);
\draw (1.895,1.125) circle (.25);
\draw (1.895,1.895) circle (.25);

\draw (1.125,-1.25) circle (.25);
\draw[gray, fill=gray!50]  (1.125,-2) circle (.25);
\draw[gray, fill=gray!50]  (1.895,-2) circle (.25);

\end{scope}

%\node at (1,-5.5) {$\begin{array}{cc} 1 \\ 0 & 0 \end{array}$};

\node at (1,-8) {$\begin{array}{cc} 1 \\ 0 &1 \end{array}$};

\end{scope}

%%%%%%
\begin{scope}[shift={(12,0)}]

\node at (.75, 4) {$\begin{array}{cc} 0 \\ 0 & 2  \end{array}$};

\node at (.5, 2) {$(0,0,2)$};

\draw (0,0) circle (.25);
\draw (.5,0) circle (.25);
\draw (1,0) circle (.25);
\draw (.25,.45) circle (.25);

\begin{scope}[shift={(-.625,-4)}]
\foreach \x in {0,.75,1.5,2.25} {
   \draw[gray!50] (\x,0) -- (\x,2.25);
   \draw[gray!50] (0,\x) -- (2.25,\x);
}

\draw[very thick] (0,0) -- (.75,0);
\draw[very thick] (2.25,1.5) -- (2.25,2.25);

\draw[very thick] (.75,0) -- (1.5,0) -- (1.5, 1.5) -- (2.25,1.5);

\draw (.375,.375) circle (.25);
\draw (1.125,.375) circle (.25);
\draw[gray, fill=gray!50] (1.895,.375) circle (.25);
\draw (1.125,1.125) circle (.25);
\draw[gray, fill=gray!50] (1.895,1.125) circle (.25);
\draw (1.895,1.895) circle (.25);

\draw[gray, fill=gray!50]  (1.125,-1.25) circle (.25);
\draw[gray, fill=gray!50]  (1.125,-2) circle (.25);
\draw (1.895,-2) circle (.25);

\end{scope}

%\node at (1,-5.5) {$\begin{array}{cc} 0 \\ 0 & 1 \end{array}$};

\node at (1,-8) {$\begin{array}{cc} 0 \\ 0 &2 \end{array}$};

\end{scope}

%%%%%
\begin{scope}[shift={(16,0)}]

\node at (.75, 4) {$\begin{array}{cc} 0 \\ 1 & 2  \end{array}$};

\node at (.5, 2) {$(0,1,2)$};

\draw (0,0) circle (.25);
\draw (.5,0) circle (.25);
\draw (1,0) circle (.25);

\begin{scope}[shift={(-.625,-4)}]
\foreach \x in {0,.75,1.5,2.25} {
   \draw[gray!50] (\x,0) -- (\x,2.25);
   \draw[gray!50] (0,\x) -- (2.25,\x);
}

\draw[very thick] (0,0) -- (.75,0);
\draw[very thick] (2.25,1.5) -- (2.25,2.25);

\draw[very thick] (.75,0) -- (0.75,.75) -- (1.5, .75) -- (1.5, 1.5) -- (2.25,1.5);

\draw (.375,.375) circle (.25);
\draw[gray, fill=gray!50] (1.125,.375) circle (.25);
\draw[gray, fill=gray!50] (1.895,.375) circle (.25);
\draw (1.125,1.125) circle (.25);
\draw[gray, fill=gray!50] (1.895,1.125) circle (.25);
\draw (1.895,1.895) circle (.25);

\draw[gray, fill=gray!50]  (1.125,-1.25) circle (.25);
\draw[gray, fill=gray!50]  (1.125,-2) circle (.25);
\draw[gray, fill=gray!50]  (1.895,-2) circle (.25);

\end{scope}

%\node at (1,-5.5) {$\begin{array}{cc} 0 \\ 0 & 0 \end{array}$};

\node at (1,-8) {$\begin{array}{cc} 0 \\ 0 &1 \end{array}$};

\end{scope}

\end{tikzpicture}

\caption{Six Catalan families used in the proof of Proposition \ref{prop:catalan}. The family $\seq{3}$ of monotone sequences $(s_0,s_1,s_2)$ maps simply to the subfamily $\seq{3}'$ of omagog pyramids whose first row is zero.  The family $\coin{3}$ of coin pyramids is in bijection with $\seq{3}$ via the hybrid family $\hybrid{3}$ consisting of lattice paths and coins, where $s_k$ is the height of the horizontal step starting at $x=k$. We map $\coin{3}$ to the subfamily $\coin{3}'$ of kagog pyramids via  family  $\hybrid{3}'$, the mirror image of the non-diagonal coins of $\hybrid{3}.$ }

\label{fig:seq-coin}

\end{figure}

\emph{Proof of (b).}
The monotone sequences $\seq{n}$ map quite  simply  to $\seq{n}'$.  The sequence $s \in \seq{n}$  maps  to the omagog triangle in $\omagog{n}$ whose final row is $(s_1, \dots, s_{n-1})$, and whose other  rows are all-zero. This mapping is clearly a bijection.

\emph{Proof of (c).}
We map coin pyramids $\coin{n}$ to the triangles in $\coin{n}'$ via the hybrid configurations in $\hybrid{n}$.   After mapping  a coin pyramid to its hybrid configuration in $\hybrid{n}$, we
ignore the white coins on the diagonal  (which correspond to the fixed base of the coin pyramid), and reflect the remaining coins across the vertical axis  to get a triangular array of the appropriate shape. Let $\hybrid{n}'$ denote the resulting family of triangular arrays of two-colored coins. Replace each white coin with a 1 and each gray coin with a 0. Finally, add $j-1$ to the entries in column $j$ for $1 \leq j \leq n-1$. The result is a kagog triangle in $\coin{n}'$.  This invertible mapping is a bijection.

\emph{Proof of (d).}
First, we show that the bijection $\psi: \pyromagog{n} \rightarrow \pyrkagog{n}$ maps
$\pyrseq{n}$ to $\pyrcoin{n}$. All of the white (present) blocks of   $\pyr{S} \in \pyrseq{n}$ are in row $n-1$. Let $\pyr{K} = \psi(\pyr{S})$ where $\psi$ is the mapping in the proof of Lemma \ref{lemma:magog-kagog}. 
Recall that in this mapping, the  block $\pyr{S}(i,j,k)$ flips colors and moves to  $\pyr{K}(n-k,n-j,i-j+1)$.
In particular, the gray block $\pyr{S}(n-2,j,k)$ maps to the white block $\pyr{K}(n-k,n-j,n-j-1)$. This proves that every tower in column $\ell = n-j$ has height at least $\ell -1 = n-j-1$;  in other words,  $\psi$ bijects $\pyrseq{n}$ to $\pyrcoin{n}$. 

It remains to show that the mapping $\psi$ corresponds to the mapping $\sigma:  \seq{n} \rightarrow \coin{n}$.  The key  is to  take a bird's eye view of a kagog pyramid $\pyr{K} \in \pyrcoin{n}$.  This view only shows the topmost blocks; this is sufficient, since the blocks in the lower layers are all white. 
%that is,  column $j$ of the hybrid configuration shows blocks at height $n-j$). 
We will see that the coin colors of $h' \in \hybrid{n}'$ correspond to the block colors of the top layer of a unique $\pyr{K} = \psi(\pyr{S})$.
Keeping this intuition in mind, we conclude the proof.
%gray blocks of $\pyr{K}=\psi(\pyr{S})$ only appear on the topmost level this pyramid, so we can restrict our attention to these topmost blocks.
After mapping, the  block $\pyr{S}(n-1,j,k)$ flips color and maps to the top-layer block  $\pyr{K}(n-k,n-j,n-j)$.
Suppose that $\pyr{S}(n-1,j,k)$ is white for $1 \leq k \leq \ell$ and gray for $\ell+1 \leq k \leq j$.
This means that $\pyr{K}(n-k, n-j, n-j)$ is gray for $1 \leq k \leq \ell$  and white for $\ell+1 \leq k \leq j$.  
In other words,  
$\pyr{K}(k', j', j')$ is gray for $n - \ell  \leq k' \leq n-1$  and white for $j'  \leq k' \leq n - \ell -1$.  
The bird's eye view of the pyramids of $\pyrkagog{n}$  bijects to the  hybrid configurations of $\hybrid{n}'$, where we replace the blocks with coins.
\end{proof}

\section{The Gog Triangle Involution}

\label{sec:gog}

Given the success of the two-color cube pyramid view of magog triangles, we conclude the paper by investigating two-color cube pyramids of gog triangles. This culminates in our proof of Theorem \ref{thm:gog}.

\begin{definition} 
\label{def:gog}
A gog triangle   $G$ of size $n$ is a triangular array of positive integers $G(i,j)$ such that
\begin{enumerate}
\item[(G1)] $1 \leq j \leq i \leq n$, so the array is triangular;
\item[(G2)] $1 \leq G(i,j) \leq n-i+j$, so entry $j$ in  row $i$ is at most $n-i+j$;
\item[(G3)] $G(i,j) < G(i,j+1)$, so rows are strictly increasing;
\item[(G4)] $G(i,j) \geq G(i+1,j)$, so columns are weakly decreasing; and 
\item[(G5)] $G(i,j) \leq G(i+1,j+1)$, so diagonals are weakly increasing.
\end{enumerate}
We use  $\gog{n}$ to denote the set of gog triangles of size $n$. 
\end{definition}

Note that we use (G2) in place of the standard condition that $G(n,j)=j$ for $1\leq j \leq n$; an elementary argument shows that they are interchangeable. Condition (G2), which applies to all rows of the triangle, is better suited for our two-colored cube pyramid argument.

The gog triangles of $\gog{3}$ are
\begin{equation}
\label{eqn:gog3}
\begin{array}{ccc}
1& &  \\
1& 2& \\
1 & 2 & 3
\end{array}
\quad
\begin{array}{ccc}
1& &  \\
1& 3& \\
1 & 2 & 3
\end{array}
\quad
\begin{array}{ccc}
2& &  \\
1& 2& \\
1 & 2 & 3
\end{array}
\quad
\begin{array}{ccc}
2& &  \\
1& 3& \\
1 & 2 & 3
\end{array}
\quad
\begin{array}{ccc}
2& &  \\
2& 3& \\
1 & 2 & 3
\end{array}
\quad
\begin{array}{ccc}
3& &  \\
1& 3& \\
1 & 2 & 3
\end{array}
\quad
\begin{array}{ccc}
3& &  \\
2& 3& \\
1 & 2 & 3
\end{array}
\end{equation}
where once again, we have chosen to left-justify the triangular arrays.

Gog triangles (also known as monotone triangles) are in bijection with alternating sign matrices.  An ASM is a $n \times n$ matrix of 0's, 1's and $-1$'s such that each or column sums to 1, and the nonzero entries in each row or column alternate in sign. The seven $3 \times 3$ ASMs are
\begin{equation} \small \left[ \hspace{-3 pt} \begin{array}{ccc}1 & 0 & 0 \\0 & 1 & 0 \\0 & 0 & 1\end{array}\hspace{-3 pt}\right] \:
\left[\hspace{-3 pt}\begin{array}{ccc}1 & 0 & 0 \\0 & 0 & 1 \\0 & 1 & 0\end{array}\hspace{-3 pt}\right] \:
\left[\hspace{-3 pt}\begin{array}{ccc}0 & 1 & 0 \\1 & 0 & 0 \\0 & 0 & 1\end{array}\hspace{-3 pt}\right] \:
\left[\hspace{-3 pt}\begin{array}{ccc}0 & 1 & 0 \\1 & -1 & 1 \\0 & 1 & 0\end{array}\hspace{-3 pt}\right] \:
\left[\hspace{-3 pt}\begin{array}{ccc}0 & 1 & 0 \\0 & 0 & 1 \\1 & 0 & 0\end{array}\hspace{-3 pt}\right] \:
\left[\hspace{-3 pt}\begin{array}{ccc}0 & 0 & 1 \\1 & 0 & 0 \\0 & 1 & 0\end{array}\hspace{-3 pt}\right] \:
\left[\hspace{-3 pt}\begin{array}{ccc}0 & 0 & 1 \\0 & 1 & 0 \\1 & 0 & 0\end{array}\hspace{-3 pt}\right].
\label{eqn:asm3}
\end{equation}
We have listed the seven gog triangles in equation \eqref{eqn:gog3} in the same order as their corresponding $3 \times 3$ ASMs in equation \eqref{eqn:asm3}. The bijective mapping of Mills et al.~\cite{mills} between ASMs and gog triangles is simple to describe: the $j$th row of the gog triangle records the locations of the 1's in the vector obtained by adding the first $j$ rows of the corresponding ASM .

%\begin{theorem}
%\label{thm:gog}
%There is a natural gog triangle involution $f: \gog{n} \rightarrow \gog{n}$  that corresponds to both (1) an affine transformation of two-color pyramids, and (2) reversing the order of the rows of the corresponding ASM.
%\end{theorem}

%This theorem is a satisfying observation for those interested in ASMs.

%%%%%%%%%%%%%%%%%%%%%%%%%%%%%%%%%%%%%%%%%%%
%%%%%%%%%%%%%%%%%%%%%%%%%%%%%%%%%%%%%%%%%%%
%%%%%%%%%%%%%%%%%%%%%%%%%%%%%%%%%%%%%%%%%%%
%\subsection{An involution of $\gog{n}$}

We now prove Theorem \ref{thm:gog}: 
there is a natural gog triangle involution $f: \gog{n} \rightarrow \gog{n}$  that corresponds to both (1) an affine transformation of two-color pyramids, and (2) reversing the order of the rows of the corresponding ASM.

Analogous to Section \ref{sec:magog-kagog}, we start by  defining ogog triangles. The minimum gog triangle has $\mini{G}(i,j)=j$ for all entries $(i,j)$, while the maximum gog triangle  has $\maxi{G}(i,j)=n-i+j$. For every gog triangle, we construct its ogog counterpart by subtracting the minimum gog triangle. The last row in every gog triangle is always $[1 \; 2 \; \cdots \; n]$ since it has length $n$ and is strictly increasing. As such, every ogog triangle has a final row of zeros, which we omit from ogog triangle.

\begin{definition}
An ogog triangle $\oo{G}$ of index $n$ is a triangular array of nonnegative integers $\oo{G}(i,j)$ such that
\begin{enumerate}
\item[(OG1)] $1 \leq j \leq i \leq n-1$, so the array is triangular;
\item[(OG2)] $0 \leq \oo{G}(i,j) \leq n-i$, so values in row $i$ are at most $n-i$;
\item[(OG3)] $\oo{G}(i,j) \leq \oo{G}(i,j+1)$, so rows are weakly increasing;
\item[(OG4)] $\oo{G}(i,j) \geq \oo{G}(i+1,j)$, so columns are weakly decreasing; and
\item[(OG5)] $\oo{G}(i,j) \leq \oo{G}(i+1,j+1) + 1$, so diagonals cannot decrease by more than 1.
\end{enumerate}
We use $\ogog{n}$ to denote the set of all ogog triangles of index $n$.
\end{definition}
For example, the ogog triangles of $\ogog{3}$ are
$$
\begin{array}{ccc}
0& &  \\
0& 0& \\
%0& 0& 0
\end{array}
\quad
\begin{array}{ccc}
0& &  \\
0& 1& \\
%0& 0& 0
\end{array}
\quad
\begin{array}{ccc}
1& &  \\
0& 0& \\
%0& 0& 0 
\end{array}
\quad
\begin{array}{ccc}
1& &  \\
0& 1&  \\
%0& 0& 0
\end{array}
\quad
\begin{array}{ccc}
1& &  \\
1& 1&  \\
%0& 0& 0
\end{array}
\quad
\begin{array}{ccc}
2& &  \\
0& 1& \\
%0& 0& 0 
\end{array}
\quad
\begin{array}{ccc}
2& &  \\
1& 1& \\
%0& 0& 0 
\end{array},
$$
where these ogog triangles are ordered so that they biject to the gog triangles in equation \eqref{eqn:gog3}.
%With these two families now defined, we  apply the pyramid mapping. Recall that a triangle $T$ (either gog or ogog) is mapped to a two-color cube pyramid $\pyrmap(T)=\pyr{T}$, where the tower at location $i,j$ has height $\maxi{T}(i,j)$. The top $\maxi{T}(i,j)-T(i,j)$  blocks of this tower are color 0, while the bottom $T(i,j)$ blocks are color 1. We will let $\pyrgog{n}$ and $\pyrogog{n}$ be the set of two-color gog and ogog pyramids of size $n$, respectively. We also have a similar building up interpretation of a block structure associated with $T$, where color 1 blocks are ``present'' in $T$ and color 0 blocks are ``absent'' in $T$. 
As constructed, the color 1 (white) cubes are present in the ogog triangle, while the color 0 (gray)  cubes are absent. Our next lemma states that the gray  cubes also represent a gog triangle.

\begin{lemma}
\label{lem:ogog-involution}
Let $\oo{G}$ be an ogog triangle and let $\pyro{G}$ be its two-color cube pyramid representation. The color 0 cubes of $\pyro{G}$ are an affine transformation of another ogog triangle.
\end{lemma}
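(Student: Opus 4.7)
My plan is to exhibit an explicit affine involution $T$ of $\R^3$ together with an explicit companion ogog triangle $H \in \ogog{n}$, and then verify that $T$ sends the color 1 cubes of $\pyr{H}$ bijectively onto the color 0 cubes of $\pyro{G}$.

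First I would take $T(i,j,k) = (i,\, i-j+1,\, n+1-i-k)$. A direct check shows that $T$ is an affine involution of $\R^3$: geometrically, it fixes the $i$-coordinate, reflects the column index $j$ about the center of row $i$, and flips each tower of height $n-i$ upside down. The natural companion triangle is $H(i,j) := n - i - \oo{G}(i,\, i-j+1)$, obtained from $\oo{G}$ by reflecting the triangle about its vertical axis of symmetry and then complementing each entry against the column cap $n-i$.

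The technical heart of the argument is verifying $H \in \ogog{n}$ by checking axioms (OG1)--(OG5). Axioms (OG1) and (OG2) are immediate, and (OG3) for $H$ reduces directly, via the substitution $l = i-j$, to (OG3) for $\oo{G}$. The interesting feature is that complementation exchanges the roles of (OG4) and (OG5). A short direct calculation shows that $H(i,j) \geq H(i+1,j)$ is equivalent to $\oo{G}(i,l) \leq \oo{G}(i+1,l+1) + 1$, i.e., (OG5) for $\oo{G}$ at column $l = i-j+1$; dually, $H(i,j) \leq H(i+1,j+1) + 1$ is equivalent to $\oo{G}(i+1,l) \leq \oo{G}(i,l)$, i.e., (OG4) for $\oo{G}$.

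Finally I would confirm the cube-level bijection: for any cube $(i',j',k')$ in the domain of $\pyr{H}$, the image $T(i',j',k') = (i,j,k)$ satisfies $1 \leq j \leq i \leq n-1$ and $1 \leq k \leq n-i$, so it is a valid cube for $\pyro{G}$; and substituting the definition of $H$ shows that the whiteness condition $k' \leq H(i',j')$ is literally equivalent to $k > \oo{G}(i,j)$, the grayness condition in $\pyro{G}$. The main obstacle will simply be identifying the correct pair $(T,H)$; once they are guessed, for instance by inspecting small cases such as $n=3$, the axiom verification is routine algebraic bookkeeping. Since $T$ is an involution and $\oo{G} \mapsto H$ is its own inverse, this lemma also yields an involution on $\ogog{n}$, which is precisely the structural feature needed for Theorem \ref{thm:gog}.
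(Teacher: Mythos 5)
Your proof is correct, but it establishes the lemma via a genuinely different affine transformation than the paper's. The paper's map $\phi$ sends cube $(i,j,k)$ to $(n-i,k,j)$: it exchanges row $i$ with row $n-i$ and transposes the horizontal and vertical coordinates within each wall. Your map $T(i,j,k)=(i,\,i-j+1,\,n+1-i-k)$ instead fixes each row and rotates its $i\times(n-i)$ wall by a half-turn in place; the companion triangle is the reflect-and-complement $H(i,j)=n-i-\oo{G}(i,i-j+1)$. I checked your algebra: $T$ is an involution preserving the cube domain, the whiteness condition $k'\le H(i',j')$ transforms exactly to $k>\oo{G}(i,j)$, and your observation that complementation exchanges (OG4) and (OG5) while (OG3) maps to itself is right (e.g.\ $H(i,j)\ge H(i+1,j)$ unwinds to $\oo{G}(i,l)\le\oo{G}(i+1,l+1)+1$ with $l=i-j+1$). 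In one respect your write-up is more complete than the paper's, which merely asserts that ``careful algebra'' shows the transformed inequalities are a permutation of the ogog constraints, whereas you exhibit $H$ in closed form and verify each axiom. The important caveat is that your involution on $\ogog{n}$ is \emph{not} the same as the paper's $\phi$: on $\ogog{3}$, the triangle with rows $0$ and $0\;1$ maps to the one with rows $1$ and $1\;1$ under $\phi$, but to the one with rows $2$ and $0\;1$ under $T$. Consequently your involution corresponds to reversing the \emph{columns} of the associated ASM (a left--right reflection), not the rows. Since the lemma is stated existentially, your proof is a valid proof of it; but Corollary \ref{cor:gog} and Theorem \ref{thm:gog} are about the specific map $\phi$, so your $T$ cannot be substituted into the remainder of Section \ref{sec:gog} without changing those statements.
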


Note that this correspondence is  an involution on the set of ogog triangles: swapping the colors twice leads us back to the original two-coloring of the cube pyramid.

\begin{proof}
Similar to the proof of Lemma \ref{lemma:magog-kagog}, we describe ogog pyramids via a set of inequalities, perform a multistep transformation and then check that the resulting inequalities also describe the set of ogog pyramids. 
The inequalities for ogog pyramids are:
%inherited from the properties of ogog triangles. Let $\pyr{\oo{G}}(i,j,k) \in \{0,1\}$ be the color value of the cube positioned at $(i,j,k)$. 
\begin{itemize}
% do we need the domain inequalities? shouldn't it be implied by where the function is defined?
\item $\pyr{\oo{G}}(i,j,k)$ is defined $1 \leq j \leq i \leq n-1$: length of row $i$ is at most $i$,
\item $\pyr{\oo{G}}(i,j,k)$ is defined  $1 \leq k \leq n-i$: height of row $i$ is at most $n-i$,
\item if $j < i$, then $\pyr{\oo{G}}(i,j+1,k)\geq \pyr{\oo{G}}(i,j,k)$: rows are weakly increasing, 
\item if $i>1$, then $\pyr{\oo{G}}(i-1,j,k) \geq \pyr{\oo{G}}(i,j,k)$: columns are weakly decreasing
\item  if $i < n$, then $\pyr{\oo{G}}(i+1,j+1,k-1) \geq  \pyr{\oo{G}}(i,j,k)$: diagonals cannot decrease by more than 1, and
\item if  $k > 1$, then $\pyr{\oo{G}}(i,j,k-1) \geq \pyr{\oo{G}}(i,j,k)$: the present cubes obey gravity.
\end{itemize}
The three-step mapping $\phi$ is:
\begin{itemize}
\item {\bf Step One:} Invert the colors, or exchange color 1 for color 0 and vice versa. This reverses the inequalities.

\item {\bf Step Two:} Perform a quarter rotation of $\R^3$ about the $x$-axis. This moves the cube $(i,j,k)$ to position $(i, -k, j)$. This tips the two-color cube pyramid onto its side. 

\item % Do we need to have n-j for this mapping?
{\bf Step Three:} Rotate by $\pi$ around the $z$-axis  and then translate by $(n,0,0)$. This moves cube $(i,j,k)$ to $(n-i, -j, k)$. 
\end{itemize}
After composing these three steps, cube $(i,j,k)$ switches color and moves to $(n-i,k,j)$. Figure \ref{fig:G2G} exemplifies the mapping $\phi$ for an ogog pyramid from $\pyrogog{4}$. 

\begin{figure}[ht!]
    \centering
\begin{tikzpicture}[scale=0.45]

%first stack
\begin{scope}[shift={(-18,0)}]
\begin{scope}[shift={(0,0)}]
\twocolor{2/1}{1/1,0/1}
\draw[rounded corners = 0.3mm, very thick, black] (-.5 cm, 1.3cm) -- (1.5 cm, 1.3 cm) -- (2 cm,1.6 cm)--(1 cm, 1.6 cm) -- (1.5 cm,1.9 cm)--(0.5 cm, 1.9cm)--(-0.5 cm, 1.3 cm);
\end{scope}
\begin{scope}[shift={(.5,2.5)}]
\twocolor{1/1}{0/1}
\draw[rounded corners = 0.3mm, very thick, black] (0.5,1.3)--(-0.5cm,1.3cm)--(0cm,1.6cm)--(1cm,1.6cm)--(0.5,1.3);
\end{scope}
\begin{scope}[shift={(1,4.5)}]
\twocolor{1/0}{}
\end{scope}

\node at (.75,-4) {$
\begin{array}{ccc}
    2   \\
    0 & 2 \\ 
    0 & 0 & 1
\end{array}
$};

\end{scope}

%first label
\node (l1) at (-17,-.3) {};
\node (r1) at (-12,-.3) {};
\draw [->] (l1) to [bend right=30]  node [below, sloped]  (t1) {\small (1)} (r1);

%second stack
\begin{scope}[shift={(-11.8,0)}]

\begin{scope}[shift={(0,0)}]
\reversetwocolor{2/1}{1/1,0/1}
\draw[rounded corners = 0.3mm, very thick, black] (-.5 cm, 1.3cm) -- (1.5 cm, 1.3 cm) -- (2 cm,1.6 cm)--(1 cm, 1.6 cm) -- (1.5 cm,1.9 cm)--(0.5 cm, 1.9cm)--(-0.5 cm, 1.3 cm);
\end{scope}
\begin{scope}[shift={(.5,2.5)}]
\reversetwocolor{1/1}{0/1}
\draw[rounded corners = 0.3mm, very thick, black] (0.5,1.3)--(-0.5cm,1.3cm)--(0cm,1.6cm)--(1cm,1.6cm)--(0.5,1.3);
\end{scope}
\begin{scope}[shift={(1,4.5)}]
\reversetwocolor{1/0}{}
\end{scope}

\node at (.75,-4) {$
\begin{array}{ccc}
    1   \\
    2 & 0 \\ 
    1 & 1 & 0
\end{array}
$};

\end{scope}

%second label
\node (l2) at (-10.8,-.3) {};
\node (r2) at (-5.6,-.3) {};
\draw [->] (l2) to [bend right=30]  node [below, sloped]  (t2) {\small (2)} (r2);

%third stack
\begin{scope}[shift={(-6,0)}]

\begin{scope}[shift={(1,.66)}]
\reversetwocolor{1/2}{}
\end{scope}
\begin{scope}[shift={(1.5,.33)}]
\reversetwocolor{2/0}{}
\end{scope}
\begin{scope}[shift={(2,0)}]
\reversetwocolor{1/0}{}
\end{scope}
\draw[rounded corners = 0.3mm, very thick, black] (1,1cm)--(2cm,1cm)--(3,1.6cm)--(1,1.6cm)--(0.5cm,1.3cm)--(1.5cm,1.3cm)--(1,1cm);

\begin{scope}[shift={(0,2.5)}]
\begin{scope}[shift={(1.5,.33)}]
\reversetwocolor{0/2}{}
\end{scope}
\begin{scope}[shift={(2,0)}]
\reversetwocolor{1/0}{}
\draw[rounded corners = 0.3mm, very thick, black] (0,1)--(-1cm,1cm)--(-.5cm,1.3cm)--(0.5cm,1.3cm)--(0,1);
\end{scope}
\end{scope}

\begin{scope}[shift={(2.25,4.5)}]
\reversetwocolor{0/1}{}
\end{scope}

\node at (1.5,-4) {$
\begin{array}{ccc}
    1 & 0 & 0 \\
    & 1 & 1 \\ 
    & & 2
\end{array}
$};

\end{scope}

%third label
\node (l3) at (-3.6,-.3) {};
\node (r3) at (1.5,-.3) {};
\draw [->] (l3) to [bend right=30]  node [below, sloped]  (t3) {\small (3)} (r3);

%fourth stack
\begin{scope}[shift={(2,0)}]

\begin{scope}[shift={(0,0)}]
\twocolor{2/1}{0/2,0/1}
\draw[rounded corners = 0.3mm, very thick, black] (-.5 cm, 1.3cm) -- (1.5 cm, 1.3 cm) -- (2 cm,1.6 cm)--(1 cm, 1.6 cm) -- (1.5 cm,1.9 cm)--(0.5 cm, 1.9cm)--(-0.5 cm, 1.3 cm);
\end{scope}
\begin{scope}[shift={(.5,2.5)}]
\twocolor{2/0}{0/1}
\draw[rounded corners = 0.3mm, very thick, black] (0.5,1.3)--(-0.5cm,1.3cm)--(0cm,1.6cm)--(1cm,1.6cm)--(0.5,1.3);
\end{scope}
\begin{scope}[shift={(1,4.5)}]
\twocolor{1/0}{}
\end{scope}

\node at (1,-4) {$
\begin{array}{ccc}
    2 &  &  \\
    1 & 1 &  \\ 
    0& 0&1  
\end{array}
$};
\end{scope}

\end{tikzpicture}

    \caption{Example of the ogog to ogog bijection $\phi$. (1) Invert the colors. (2) Tip the stack around the $x$-axis by a quarter turn. (3)  Rotate one half turn about the $z$-axis, then translate by $(n,0,0)$. }
    \label{fig:G2G}
\end{figure}
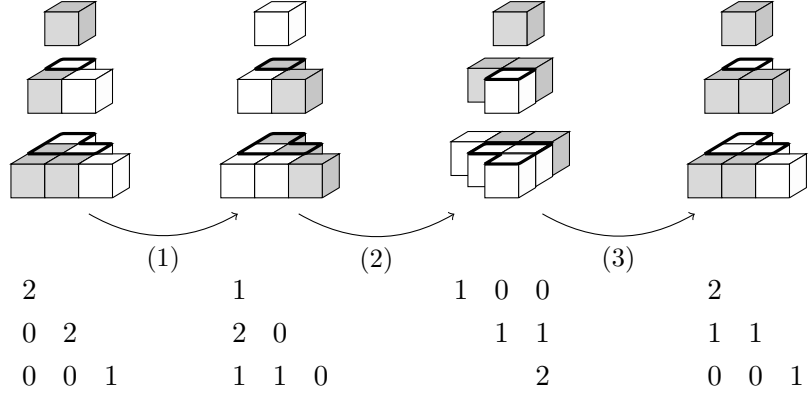

% Should we go through and track the constraints? What needs to go here?
Careful algebra shows that the resulting constraints are a permutation of the algebraic inequalities  for an ogog cube pyramid. As such, this mapping takes one gog triangle to another gog triangle. This affine mapping is an involution, so it is bijective.
\end{proof}

%Equivalently, we can argue this result directly from the transformation. Let $\phi : \pyrogog{n} \rightarrow \pyrogog{n}$ be the composition of these three mappings. Let $\pyr{\oo{G}} \in \pyrogog{n}$ be the two-color cube pyramid for ogog triangle $\oo{G}$. Then
% we absolutely need to double check this.
%\begin{equation}
% \pyr{\oo{G}}(i,j,k)  =   1-\phi(\pyr{\oo{G}})(n-i, k, j),
% \label{eqn:gogmap}
%\end{equation}
%which proves the theorem.

The ogog pyramids are in bijection with gog triangles, and hence also in bijection with alternating sign matrices. Our next corollary shows that the  involution $\phi$ of Lemma \ref{lem:ogog-involution}  reverses the rows of the associated ASM.

%do we need to elaborate on how the asms go with gog/ogog, or how we "process" rows?

\begin{cor}
\label{cor:gog}
Let $\phi$ be the ogog pyramid involution of Lemma \ref{lem:ogog-involution}.
Let $A$ be an $n \times n$ alternating sign matrix corresponding to ogog triangle $\oo{G}$ with two-color cube pyramid $\pyr{\oo{G}}$. Let $\pyro{H} = \phi(\pyr{\oo{G}})$ and let $\oo{H}$ be the associated ogog triangle.  Then $\oo{H}$ is the ogog triangle corresponding to processing the rows of $A$ in reverse order.
\end{cor}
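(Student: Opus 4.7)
The plan is to translate row-reversal of the ASM $A$ to the language of gog triangles, then to ogog triangles, and finally verify entry-by-entry that the resulting ogog triangle coincides with the triangle $\oo{H}$ produced by the involution $\phi$ of Lemma \ref{lem:ogog-involution}.

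First I would invoke the bijection of \cite{mills}: if $A$ has gog triangle $G$, then row $k$ of $G$ lists, in increasing order, the positions of the $1$'s in the partial row sum $A_1 + \cdots + A_k$. Since $A_1 + \cdots + A_n = (1,\ldots,1)$, the row-reversed matrix $A'$ satisfies
\[
A'_1 + \cdots + A'_k \;=\; (1,\ldots,1) - (A_1 + \cdots + A_{n-k}),
\]
so row $k$ of the gog triangle $G'$ of $A'$ is the set complement in $[n]$ of row $n-k$ of $G$. Writing $S = \{G(n-k,1), \ldots, G(n-k,n-k)\}$ and $[n]\setminus S = \{s'_1 < \cdots < s'_k\}$, this gives $G'(k,j) = s'_j$ and hence
\[
\oo{G'}(k,j) \;=\; s'_j - j \;=\; \bigl|\{\,s \in S : s < s'_j\,\}\bigr|.
\]

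Second, I would compute $\oo{H}(k,j)$ directly from the description of $\phi$. Because $\phi$ swaps colors and sends $(i,j,c)\mapsto(n-i,c,j)$, a cube $(k,j,c)$ is color~$1$ in $\pyr{\oo{H}}$ iff its preimage $(n-k,c,j)$ is color~$0$ in $\pyr{\oo{G}}$; unpacking the ogog-to-cube dictionary, this amounts to $j+c > G(n-k,c)$, equivalently $G(n-k,c) - c < j$. By (G3) we have $G(n-k,c+1) \ge G(n-k,c) + 1$, so the sequence $G(n-k,c) - c$ is weakly increasing in $c$, so the color~$1$ cubes in the tower at $(k,j)$ form an initial segment (respecting the gravity condition for ogog pyramids), and
\[
\oo{H}(k,j) \;=\; \bigl|\{\,c \in [n-k] : G(n-k,c) - c < j\,\}\bigr|.
\]
The heart of the argument is then the combinatorial identity
\[
\bigl|\{\,c \in [n-k] : g_c - c < j\,\}\bigr| \;=\; \bigl|\{\,s \in S : s < s'_j\,\}\bigr|, \qquad g_c := G(n-k,c),
\]
which I would prove as follows. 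Since $g_c \in S$ and exactly $c$ elements of $S$ are at most $g_c$, the value $g_c - c$ counts the elements of $[n]\setminus S$ strictly below $g_c$; consequently $g_c - c < j$ is equivalent to $g_c < s'_j$ (there are fewer than $j$ elements of $[n]\setminus S$ below $g_c$). The number of $c$'s satisfying $g_c < s'_j$ is precisely the number of elements of $S$ strictly less than $s'_j$, which is the right-hand side. Combined with the preceding step this yields $\oo{H}(k,j) = \oo{G'}(k,j)$ for every valid $(k,j)$, as desired.

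The main obstacle I anticipate is the bookkeeping around $\phi$: verifying that the three affine moves really compose to $(i,j,c)\mapsto(n-i,c,j)$, and checking that the monotonicity of $G(n-k,c) - c$ legitimately allows $\oo{H}(k,j)$ to be read off as a single column count. Once those two points are cleanly settled, the rank/corank duality between $S$ and its complement delivers the key identity essentially for free.
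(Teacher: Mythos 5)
Your proof is correct and follows essentially the same route as the paper's: both reduce ASM row reversal to complementation of gog-triangle rows (via the identity that the first $k$ partial row sums of the reversed matrix equal the all-ones vector minus the first $n-k$ partial sums of $A$), and then verify that $\phi$ carries the wall of row $n-k$ of $\pyro{G}$ to the wall of row $k$ of $\pyro{H}$ in a way that implements exactly that complementation. The only difference is bookkeeping: the paper encodes each ogog row by the multiplicities $s_m$ of its values and matches the two resulting formulas, whereas you use the rank identity $g_c - c = \bigl|\{\,s' \in [n]\setminus S : s' < g_c\,\}\bigr|$, which is a slightly slicker packaging of the same computation.
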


Figure \ref{fig:gogmap} shows an example of how the alternating sign matrices $A$ and $B$ corresponding to ogog pyramids $\pyr{\oo{G}}$ and $\pyr{\oo{H}} = \phi(\pyr{\oo{G}})$ are the row reversals of one another.

\begin{figure}[ht!]
    \centering
\begin{tikzpicture}

\foreach \x in {-1.5, -4.5, -7.5, -10}
{
\node at (0, \x) {$\updownarrow$};
\node at (5, \x) {$\updownarrow$};
}

\node at (2.5,0) {$\longleftrightarrow$};
\node at (2.5,0.05) {\small{\begin{tabular}{c} row \\ reversal \end{tabular}}};
\node at (2.5,-12) {$\longleftrightarrow$};
\node at (2.5,-11.75) {$\phi$};

\node at (10,0) {ASM};
\node at (10,-3) {ASM partial sums};
\node at (10,-5.9) {gog triangle};
\node at (10,-8.75) {ogog triangle};
\node at (10,-11.75) {two-color cube pyramid};
%%%%%%%%%%%%

\node at (-2,0) {$A$};

\node at (0,0)
{\small{
$
\left[
\begin{array}{cccc}
0 & 0 & 1 & 0 \\
1 & 0 & -1 & 1 \\
0 & 1 & 0 & 0 \\
0 & 0 & 1 & 0 
\end{array}
\right]
$
}};

%%%%%%%%%%%%

\node at (-2,-3) {$A'$};

\node at (0,-3)
{\small{
$
\small
\left[
\begin{array}{cccc}
0 & 0 & 1 & 0 \\
1 & 0 & 0 & 1 \\
1 & 1 & 0 & 1 \\
1 & 1 & 1 & 1 
\end{array}
\right]
$
}};

%%%%%%%%%%%%
\node at (0,-5.9)
{\small{
$
\begin{array}{cccc}
3 \\
1 & 4 \\
1 & 2 & 4 \\
1 & 2 & 3 & 4
\end{array}
$
}};
%\node at (0, -6.9) {$G$};
\node at (-2, -5.9) {$G$};

%%%%%%%%%%%%
\node at (0,-8.75)
{\small{
$
\begin{array}{cccc}
2 \\
0 & 2 \\
0 & 0 & 1 & \phantom{0}
\end{array}
$
}};
%\node at (0, -9.85) {$\oo{G}$};
\node at (-2, -8.85) {$\oo{G}$};

%%%%%%%%%%%%
\node at (-1.75, -11.75) {$\pyr{\oo{G}}$};
\node at (0,-11.75)
{
\begin{tikzpicture}[scale=0.4]

%\node at (.5, -1.25) {$\pyr{\oo{G}}$};

%\twocolor{0/4}{0/3,0/2,0/1}
\begin{scope}[shift={(0,0)}]
\twocolor{2/1}{1/1,0/1}
\draw[rounded corners = 0.3mm, very thick, black] (-.5 cm, 1.3cm) -- (1.5 cm, 1.3 cm) -- (2 cm,1.6 cm)--(1 cm, 1.6 cm) -- (1.5 cm,1.9 cm)--(0.5 cm, 1.9cm)--(-0.5 cm, 1.3 cm);
\end{scope}
\begin{scope}[shift={(.5,2.25)}]
\twocolor{1/1}{0/1}
\draw[rounded corners = 0.3mm, very thick, black] (0.5,1.3)--(-0.5cm,1.3cm)--(0cm,1.6cm)--(1cm,1.6cm)--(0.5,1.3);
\end{scope}
\begin{scope}[shift={(1,4.25)}]
\twocolor{1/0}{}
\end{scope}
\end{tikzpicture}
};

%%%%%%%%%%%%
%%%%%%%%%%%%

\node at (7,0) {$B$};

\node at (5,0)
{\small{
$
\left[
\begin{array}{cccc}
0 & 0 & 1 & 0 \\
0 & 1 & 0 & 0 \\
1 & 0 & -1 & 1 \\
0 & 0 & 1 & 0 
\end{array}
\right]
$
}};

%%%%%%%%%%%%

\node at (7,-3) {$B'$};

\node at (5,-3)
{\small{
$
\left[
\begin{array}{cccc}
0 & 0 & 1 & 0 \\
0 & 1& 1 & 0 \\
1 & 1 & 0 & 1 \\
1 & 1 & 1 & 1 
\end{array}
\right]
$
}};

%%%%%%%%%%%%
\node at (5,-5.9)
{\small{
$
\begin{array}{cccc}
3 \\
2 & 3 \\
1 & 2 & 4 \\
1 & 2 & 3 & 4
\end{array}
$
}};
%\node at (5, -6.9) {$H$};
\node at (7, -5.9) {$H$};

%%%%%%%%%%%%
\node at (5,-8.75)
{\small{
$
\begin{array}{ccc}
2 \\
1 & 1 \\
0 & 0 & 1 
\end{array}
$
}};
%\node at (5, -9.85) {$\oo{H}$};
\node at (7, -8.85) {$\oo{H}$};

%%%%%%%%%%%%
\node at (6.75, -11.75) {$\pyr{\oo{H}}$};

\node at (5,-11.75)
{
\begin{tikzpicture}[scale=0.4]

%\node at (.5, -1.25) {$\phi(\pyr{\oo{G}})$};
%\node at (.5, -1.25) {$\pyr{\oo{H}}$};

%\twocolor{0/4}{0/3,0/2,0/1}
\begin{scope}[shift={(0,0)}]
\twocolor{2/1}{0/2,0/1}
\draw[rounded corners = 0.3mm, very thick, black] (-.5 cm, 1.3cm) -- (1.5 cm, 1.3 cm) -- (2 cm,1.6 cm)--(1 cm, 1.6 cm) -- (1.5 cm,1.9 cm)--(0.5 cm, 1.9cm)--(-0.5 cm, 1.3 cm);
\end{scope}
\begin{scope}[shift={(.5,2.25)}]
\twocolor{2/0}{0/1}
\draw[rounded corners = 0.3mm, very thick, black] (0.5,1.3)--(-0.5cm,1.3cm)--(0cm,1.6cm)--(1cm,1.6cm)--(0.5,1.3);
%\draw[dashed] (-1,0) -- (-1,-1.25);
%\draw[dashed] (1,0) -- (1,-1.25);
\end{scope}
\begin{scope}[shift={(1,4.25)}]
\twocolor{1/0}{}
\end{scope}
\end{tikzpicture}
};

\end{tikzpicture}

    \caption{The alternating sign matrices $A$ and $B$ corresponding to the two-color cube pyramids $\pyr{\oo{G}}$ and $\pyr{\oo{H}} = \phi(\pyr{\oo{G}})$ are the row reversals of one another.}
    \label{fig:gogmap}
\end{figure}
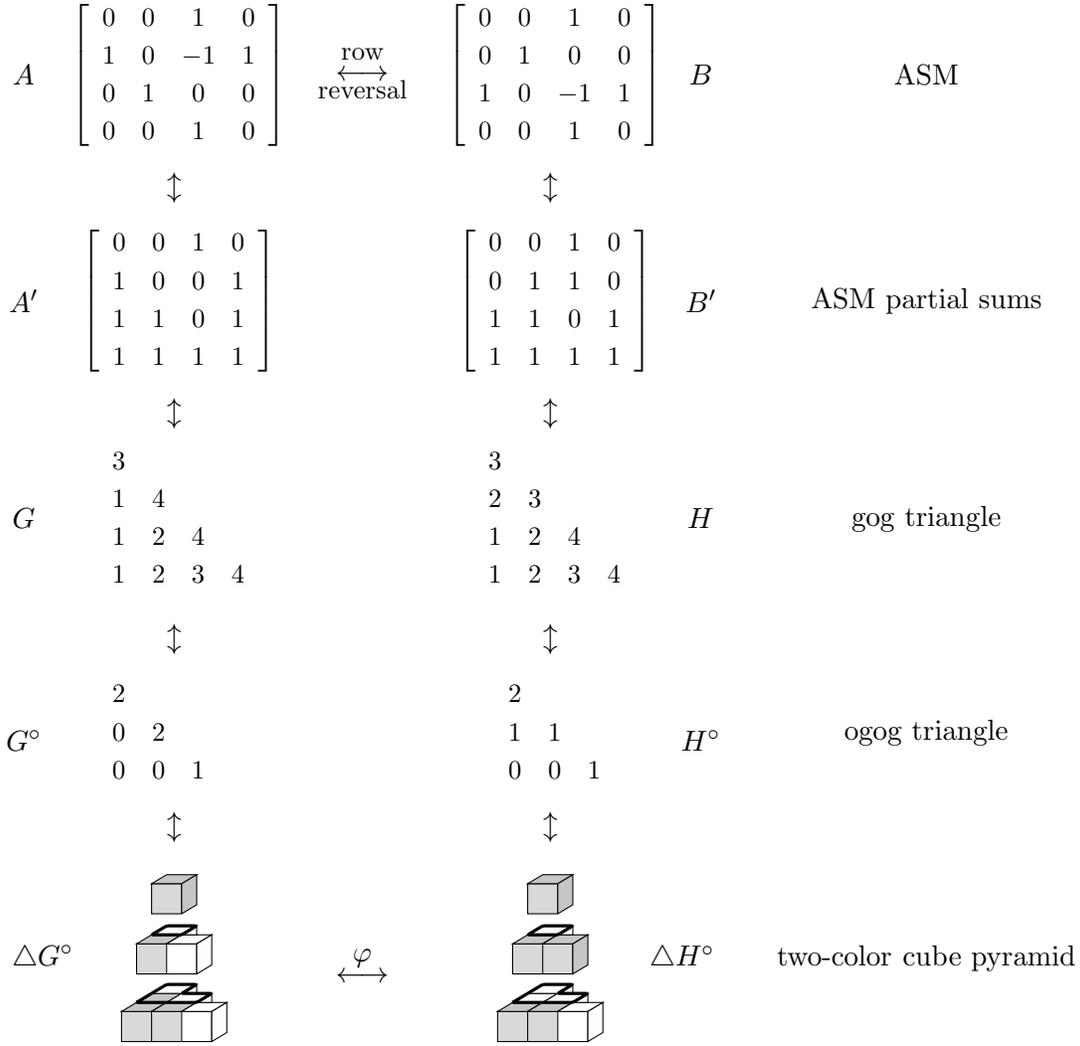

\begin{proof}
Starting with the alternating sign matrix $A$, we obtain the ogog triangle $\oo{G}$ as follows. First, we create  the matrix $A'$ whose $i$th row is the sum of the first $i$ rows of $A$. This is a 0-1 matrix whose $i$th row contains exactly $i$ ones. We convert $A'$ into a gog triangle $G$ by reporting the indices of the ones in each row. We then set $\oo{G} = G - \mini{G}$, which corresponds to subtracting $[  1, 2 , \ldots, i ]$ from row $i$ of $G$ for $1 \leq i \leq n$ and then deleting the final row (which is all-zero).

Let $A_i$ denote the $i$th row of $A$ and let  $A'_i = \sum_{k=1}^k A_i$ denote the  $i$th row of the partial sum matrix $A'$. 
Let $1 \leq a'_1 < a'_2 < \cdots < a'_i \leq n$ denote the locations of the ones in row $A'_i.$  Then $G(i,j)=a'_j$, or equivalently $[a'_1, a'_2, \cdots, a'_i]$ is the $i$th row of the gog triangle $G$. The entries satisfy
$$
\begin{array}{ccccccc}
1 &\leq& a'_1 &\leq& n-i+1,  \\
a'_{j-1} &<& a'_j &\leq& n-i+j, &&  2 \leq j \leq i.
\end{array}
$$
Row $i$ of ogog triangle $\oo{G}$ is
\begin{equation}
\label{eqn:ogog-row}
[a'_1-1, a'_2-2 \cdots, a'_i - i].
\end{equation} 

We start with row $n-1$ of our triangle, as it is the simplest row to comprehend. Row $n-1$ of gog triangle $G$ is  $[a'_1, a'_2, \ldots , a'_{n-1}]$, which is missing a  single number $\ell \in [n]$, namely the location $\ell$ of  the unique one in row $n$ of $A$. By equation \eqref{eqn:ogog-row}, the corresponding ogog row consists of $\ell-1$ zeros followed by $n-\ell$ ones. 

Consider this row in the context of the two-color ogog pyramid $\pyr{\oo{G}}$ and its image
$\pyr{\oo{H}} = \phi(\pyr{\oo{G}})$.
Row $n-1$ of pyramid $\pyr{\oo{G}}$ has height 1. It contains $\ell-1$ cubes of color 0, followed by $n-\ell$ cubes of color 1. 
After transformation $\phi$, the cube $(n-1,j,1)$ switches color and moves to $(1,1,j)$. So   $\pyr{\oo{H}}$ has a tower of blocks at $(1,1)$ of height $n-1$, with $\ell-1$ cubes of color 1 below $n-\ell$ cubes of color 0. It follows that ogog triangle $\oo{H}$ has
$\oo{H}(1,1) = \ell-1$, and thus the corresponding gog triangle $H$ has  $H(1,1)=\ell$. This confirms that the first row of gog triangle $H$ corresponds to the last row of matrix $A$, as desired.

We now handle a generic row $i$ of ogog triangle $\oo{G}$; Figure \ref{fig:gog-wall} shows an example. The entries of row $i$ are a weakly increasing list of length $i$, drawn from $ \{ 0, 1, \ldots, n-i \}$. Let $0 \leq s_m \leq i $ be the number of consecutive $m$'s in this list, so that
$$\sum_{m=0}^{n-i} s_m = i.$$ 
In the corresponding gog triangle $G$, row $i$ is missing the integers 
\begin{equation} 
\label{eqn:sum-of-missing}
1 + p +  \sum_{k=0}^{p} s_k \quad \mbox{where} \quad 0 \leq p \leq n-i-1.
\end{equation}
%this sentence sounds weird, and I'm not sure if it needs like a ones or something?
%\blue{We now make two key observations. First, encoding the location of the ones in  $A_i'$ is equivalent to encoding the location of the zeros in row $A'_i$. Second, the locations of the zeros in $A'_i$  are precisely the locations of the ones in the partial sum $\sum_{k=i+1}^n A_k$. The latter holds since adding these two partial sums gives the final row $A'_n$, which is the all-ones row vector.}
Let us pause to make some key observations.
%about gog triangle $G$ and matrix $A$. 
The missing integers in row $i$ of  $G$ are precisely the locations of the zeros in the partial sum $A'_i = \sum_{\ell=1}^{i} A_{\ell}$.
Since the sum of all the rows yeilds the all-ones vector, these are also the locations of the ones in the partial sum $\sum_{\ell=i+1}^{n} A_{\ell}$. Of course,  summing  the last $n-i$ rows of $A$ is the same as  summing the first $n-i$ rows of the row reversal of $A$.  

Next, we translate our observations into statements about two-color pyramids. When we convert ogog triangle $\oo{G}$ into pyramid $\pyro{G}$, row $i$ of $\oo{G}$ maps to the $i \times (n-i)$ wall of cubes 
$$
\pyro{G_i} = \{ (i,j,k) : 1 \leq j \leq i \mbox{ and } 1 \leq k \leq n-i \}.
$$
The layer of wall $\pyro{G_i}$ at height $k$  consists of $\sum_{m=0}^{k-1} s_{m}$ cubes of color 0 followed by $\sum_{m=k}^{n-i} s_{m}$ cubes of color 1.
The transformation $\phi: \pyro{G} \mapsto \pyro{H}$ maps $\pyro{G_i}$ to the $(n-i) \times i$ wall
$$
\pyro{H_{n-i}} = \{ (n-i,k,j) : 1 \leq k \leq n-i \mbox{ and } 1 \leq j \leq i  \}.
$$
We have inverted the colors and exchanged vertical and horizontal, so the tower of wall $\pyro{H_{n-i}}$ at $(n-i,k)$ consists of $\sum_{m=0}^{k-1} s_{m}$ cubes of color 1, stacked below  $\sum_{m=k}^{n-i} s_{m}$ cubes of color 0.

\begin{figure}[ht!]

\begin{center}

\begin{tikzpicture}

%%%%%%
\begin{scope}[shift={(0,0)}]

\node at (0,0) {$
\begin{array}{cccccccc}
5 \\
3 & 5 \\
2 & 5 & 5 \\
1 & 2 & 2 & 3 \\
0 & 1 & 1 & 1 & 2 \\
0 & 0 & 1 & 1 & 2 & 2\\
0 & 0 & 1 & 1 & 1 & 1 & 1\\
\end{array}
$};

\node at (0,-2.5) {$\oo{G}$};

\draw (-1.85,-.35) -- (.85,-.35) -- (.85, -.8) -- (-1.85, -.8) -- cycle;

\draw[latex-latex] (1, -.35) -- (2, .35);

\end{scope}

%%%%%
\begin{scope}[shift={(11.5,0)}]

\node at (0,0) {$
\begin{array}{cccccccc}
2 \\
2 & 4 \\
1 & 4 & 5 \\
0 & 1 & 3 & 4 \\
0 & 0 & 1 & 1 & 1 \\
0 & 0 & 0 & 1 & 1 & 2\\
0 & 0 & 0 & 0 & 0 & 1 & 1\\
\end{array}
$};

\node at (0,-2.5) {$\oo{H}$};

\draw (-1.85,.8) -- (-.25,.8) -- (-.25, .35) -- (-1.85, .35) -- cycle;

\draw[latex-latex] (-2,.35) -- (-3, -.15);

\end{scope}

%%%%%%
\begin{scope}[shift={(2.8,0)}, scale=.5]

\twocolor{1/4}{}

\begin{scope}[shift={(0,1)}]
\twocolor{4/1}{}
\end{scope}

\begin{scope}[shift={(0,2)}]
\twocolor{5/0}{}
\end{scope}

\draw[latex-latex] (2.5,-.5) to [bend right] (7,-1.5);

\node at (4.5, -2.5) {$\phi$};

\node at (.5,-1) {$\pyro{G_5}$};

\end{scope}

%%%%%%%%
\begin{scope}[shift={(7,-1.5)}, scale=.5]

\twocolor{0/3}{}

\begin{scope}[shift={(0,1)}]
\twocolor{1/2}{}
\end{scope}

\begin{scope}[shift={(0,2)}]
\twocolor{1/2}{}
\end{scope}

\begin{scope}[shift={(0,3)}]
\twocolor{1/2}{}
\end{scope}

\begin{scope}[shift={(0,4)}]
\twocolor{2/1}{}
\end{scope}

\node at (.5,-1) {$\pyro{H_3}$};

\end{scope}

\end{tikzpicture}

\end{center}

\caption{An ogog triangle $\oo{G}$ from $\ogog{8}$ and its image $\oo{H}$ via the invertible mapping $\phi$. Row 5 of triangle $\oo{G}$ becomes pyramid wall $\pyro{G_5}$ which maps via $\phi$ to pyramid wall $\pyro{H_3}$ and then to row 3 of triangle  $\oo{H}$.}

\label{fig:gog-wall}

\end{figure}

We now translate the structure of pyramid $\pyro{H}$ into the triangle setting. Ogog triangle $\oo{H}$ has  $\oo{H}(n-i,k) = \sum_{m=0}^{k-1} s_{m}$ for $1 \leq h \leq n-i$, so its corresponding gog triangle $H$ has
\begin{equation}
\label{eqn:sum-of-gog}
H(n-i,k) = k + \sum_{m=0}^{k-1} s_{m} \quad \mbox{where} \quad 1 \leq k \leq n-i.
\end{equation}
The formulas in equations \eqref{eqn:sum-of-missing} and \eqref{eqn:sum-of-gog} are equivalent (taking $k = p+1$). Therefore, row $n-i$ of gog triangle $H$ contains the locations of the ones in the partial sum $\sum_{k=i+1}^n A_k$. In other words, gog triangle $H$ is constructed by considering the rows of alternating sign matrix $A$ in reverse order.
\end{proof}

\begin{proof}[Proof of Theorem \ref{thm:gog}]
Let $\gamma : \gog{n} \rightarrow \ogog{n} $ be the bijection $\gamma(G) = G - \mini{G}$.
Let $\pi : \ogog{n} \rightarrow \pyrogog{n}$ be the bjiection $\pi(\oo{G}) = \pyr{\oo{G}}$. By Corollary \ref{cor:gog}, the desired involution $f: \gog{n} \rightarrow \gog{n}$ is
$f =  \gamma^{-1} \circ \pi^{-1} \circ \phi \circ \pi \circ \gamma$.
\end{proof}

\section{Conclusion and Future Work}

Poset refinements of the de Finetti Lattice $F_{n,2}$ have interesting combinatorical connections. We have shown that $\finetti{n}{2}$ is enumerated by the strict-sense ballot numbers  and that $\finetti{n}{2}{1}$ is enumerated by the ASM/TSSCPP sequence.
We have also shown that there is a very natural involution on gog triangles that corresponds to reversing the rows of the associated alternating sign matrices.
We conclude this work with some open research questions relating to both poset refinement and ASM/TSSCPP.

One natural continuation of this work is to consider the de Finetti refinements of the order ideal $B_{n,m}$ for $3 \leq m \leq n$, with $m=3$ as the obvious starting point. Analogous to Section \ref{sec:csp}, let $F_{n,m}$ to be the unique minimal de Finetti refinement of $B_{n,m}$. For $3 \leq m \leq n$, let $\finetti{n}{m}$ denote the collection of linear extensions of $F_{n,m}$ that adhere to de Finetti's condition (F2). For $1 \leq k \leq m$, let
$\finetti{n}{m}{k}$ denote the collection of minimal de Finetti refinements of $F_{n,m}$ such that every set of size at most $k$ is comparable with all other sets.
Are any of these families enunerated by known combinatorial sequences? If so, can we find a natural bijection to the appropriate combinatorial family? 

An understanding of these poset families could provide valuable insight into the family $\finetti{n}$ of de Finetti total orders. Any new perspective  could have ramifications for comparative probability orders and completely separable preferences. One could further investigate the subfamily of de Finetti refinements $\finetti{n}{m}{k}$ by defining a graph where we connect posets via an appropriately atomic operation, such as transpositions \cite{pruesse} for $\mathcal{L}(B_n)$  or flips \cite{maclagan} for members of $\finetti{n}$.
Also, can the dimension \cite{trotter} of a de Finetti refinement of $F_{n,m}$ be achieved by restricting ourselves to de Finetti refinements?

This paper brings  two  novel families into the fold of ASM and TSSCPP combinatorial structures: the poset refinements $\finetti{n}{2}{1}$ and the kagog triangles $\kagog{n}$. Some recent efforts have focussed on statistic-preserving bijections between subfamilies of ASM and TSSCPP structures \cite{biane,striker2018}. Perhaps the properties  $\finetti{n}{2}{1}$ and $\kagog{n}$ might reveal connections to help traverse the gap between ASM and TSSCPP. In particular, our two-color cube pyramid representation for triangular arrays revealed a natural bijection between magog triangles and kagog triangles, as well as a nice involution on gog triangles. We are optimistic that this point of view could aid in the investigation of the other known triangular families.

{\bf Acknowledgments.} We thank David Bressoud and Jessica Striker for sharing their insights about ASMs, TSSCPPs, triangular arrays and posets. 
%We also Jessica Striker for her suggestion that we emphasize the poset perspective in our exposition.

\bibliography{refs} %use a bibtex bibliography file refs.bib

\end{document}